\newcommand{\trace}{Tr}
\newcommand{\aut}{Aut}
\newcommand{\fix}{Fix}
\newcommand{\FIX}{FIX}
\newcommand{\lcm}{lcm}
\journal{Discrete Mathematics}
\begin{document}

	\newdefinition{definition}{Definition}
	\newtheorem{theorem}{Theorem}
	\newtheorem{corollary}{Corollary}
	\newtheorem{lemma}{Lemma}
	\newtheorem{conjecture}{Conjecture}
	\newdefinition{observation}{Observation}
	\newdefinition{problem}{Problem}
	\tikzset{middlearrow/.style={
			decoration={markings,
				mark= at position 0.7 with {\arrow[scale=2]{#1}} ,
			},
			postaction={decorate}
		}
	}
	
	\tikzset{midarrow/.style={
			decoration={markings,
				mark= at position 0.5 with {\arrow[scale=2]{#1}} ,
			},
			postaction={decorate}
		}
	}
	
	\newcommand{\TODO}[1]{\textcolor{red}{TODO: #1}}

	\begin{frontmatter}
		
		
		\title{The structure of digraphs with excess one}
		


		\author[label1]{James Tuite}
		\ead{james.tuite@open.ac.uk}

		\address{Department of Mathematics and Statistics, Open University, Walton Hall, Milton Keynes}

		\begin{abstract}
		A digraph $G$ is \emph{$k$-geodetic} if for any (not necessarily distinct) vertices $u,v$ there is at most one directed walk from $u$ to $v$ with length not exceeding $k$. The order of a $k$-geodetic digraph with minimum out-degree $d$ is bounded below by the directed Moore bound $M(d,k) = 1+d+d^2+\dots +d^k$. The Moore bound can be met only in the trivial cases $d = 1$ and $k = 1$, so it is of interest to look for $k$-geodetic digraphs with out-degree $d$ and smallest possible order $M(d,k)+\epsilon $, where $\epsilon $ is the \emph{excess} of the digraph. Miller, Miret and Sillasen recently ruled out the existence of digraphs with excess one for $k = 3,4$ and $d \geq 2$ and for $k = 2$ and $d \geq 8$. We conjecture that there are no digraphs with excess one for $d,k \geq 2$ and in this paper we investigate the structure of minimal counterexamples to this conjecture. We severely constrain the possible structures of the outlier function and prove the non-existence of certain digraphs with degree three and excess one, as well closing the open cases $k = 2$ and $d = 3,4,5,6,7$ left by the analysis of Miller et al. We further show that there are no involutary digraphs with excess one, i.e. the outlier function of any such digraph must contain a cycle of length $\geq 3$.     	
		\end{abstract}
		
		\begin{keyword}
			Degree/geodecity problem \sep Excess \sep Digraph  
			
			\MSC  05C35 \sep 05C20 \sep 90C35
		\end{keyword}
		
	\end{frontmatter}

	\section{Introduction}
The undirected degree/girth problem asks for the smallest possible order of a graph that has degree $d$ and girth $g$. If $g = 2k+1$ is odd, then the order of any such graph is bounded below by the undirected Moore bound $1+d+d(d-1)+\dots +d(d-1)^{k-1}$. A graph that has order exceeding the Moore bound by $\epsilon $ has \emph{excess} $\epsilon $; a graph that meets the Moore bound is called a \emph{Moore graph}. It was shown in \cite{BanIto1,Dam,HofSin} that, with the trivial exception of cycles and complete graphs, for odd $g = 2k+1$ the Moore bound can be met only for diameter $k = 2$ and degrees $d = 3$ (the Petersen graph), $d = 7$ (the Hoffman-Singleton graph) and possibly $d = 57$. Interestingly, it is even harder for a graph to have excess one than to meet the Moore bound; Bannai and Ito showed in \cite{BanIto} that there are no graphs with excess one and girth $2k+1 \geq 5$.

A directed graph $G$ is \emph{$k$-geodetic} if for any pair of (not necessarily distinct) vertices $u,v$ of $G$ there is at most one directed walk from $u$ to $v$ in $G$ with length $\leq k$. The following analogue for directed graphs of the degree/girth problem was raised by Sillasen in \cite{Sil}:

 \begin{problem}[Degree/geodecity problem]
 	What is the smallest possible order of a $k$-geodetic digraph with minimum out-degree $d$?
 	\end{problem}
It was shown in \cite{TuiErs} that for any $d,k \geq 1$ there exists a diregular $k$-geodetic digraph with degree $d$, so this problem is well-posed.

A lower bound on the order of a $k$-geodetic digraph $G$ with minimum out-degree $d$ can be derived as follows. Fix a vertex $u$ of $G$; this will be Level 0 of a directed \emph{Moore tree} of depth $k$. There are at least $d$ out-neighbours of $u$; draw these at Level 1 of the tree with arcs from $u$ at Level 0. In general once we have completed Level $t$ of the tree, where $0 \leq t \leq k-1$, we draw arcs from each vertex of Level $t$ to each of its out-neighbours at Level $t+1$. It follows by induction that for $0 \leq t \leq k$ there are at least $d^t$ vertices at Level $t$ of the Moore tree. Moreover by $k$-geodecity all of the vertices in this tree must be distinct. It follows that the order of $G$ is bounded below by the \emph{directed Moore bound} \[ M(d,k) = 1+ d+ d^2+\dots +d^k.\] It was shown by Bridges and Toueg in \cite{BriTou} that the only digraphs that meet the directed Moore bound are directed cycles and complete digraphs. Hence for $d,k \geq 2$ we are interested in $k$-geodetic digraphs with minimum out-degree $d$ and order $M(d,k)+\epsilon $ for some small \emph{excess} $\epsilon $. A smallest such digraph is a \emph{geodetic cage}.

In this paper we will be interested in digraphs with excess $\epsilon = 1$; we will call such a digraph a $(d,k;+1)$-digraph, which we will typically denote by the letter $G$. For every vertex $u$ of such a digraph there is a unique vertex $o(u)$ such that the distance from $u$ to $o(u)$ in $G$ satisfies $d(u,o(u)) \geq k+1$; we call the associated function $o:V(G) \rightarrow V(G)$ the \emph{outlier function} of $G$. The following results on the structure of $(d,k;+1)$-digraphs were proven in \cite{MilMirSil,Sil}.   

\begin{lemma}\cite{MilMirSil,Sil}\label{lem:known results}
	If $G$ is a $(d,k;+1)$-digraph, then
	\begin{itemize}
		\item $G$ is diregular.
		\item Either $G$ is a $(d,2;+1)$-digraph, where $d$ lies in the range $3 \leq d \leq 7$, or a $(d,k;+1)$-digraph with $d \geq 3$ and $k \geq 5$, or a directed $(k+2)$-cycle. 
		\item The outlier function $o$ of $G$ is an automorphism.  
	\end{itemize} 
\end{lemma}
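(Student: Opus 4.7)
My plan is to prove the three bullet points in sequence around a single matrix identity. Let $A$ be the adjacency matrix of $G$, $J$ the all-ones matrix, $n = M(d,k) + 1 = |V(G)|$, and let $P$ be the $0/1$ matrix defined by $P_{uv} = 1$ iff $v = o(u)$; \emph{a priori} $P$ has row sums all equal to $1$ but unknown column sums $|o^{-1}(v)|$.

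First, out-regularity follows from the Moore-tree construction already set up in the introduction: for each $u$, the out-tree of depth $k$ contains at least $M(d,k) = n - 1$ distinct vertices by $k$-geodecity and the minimum-out-degree hypothesis, and since $|V(G)| = n$ no vertex along the tree can have out-degree strictly greater than $d$ and no repetitions can occur. The unique omitted vertex is $o(u)$, yielding the pivotal identity
\[
I + A + A^2 + \cdots + A^k \;=\; J - P,
\]
whose $(u,v)$ entry records whether there is a walk of length $\leq k$ from $u$ to $v$.

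The second, most technical, step is to deduce bijectivity of $o$ together with in-regularity. Since the left-hand side of the identity is a polynomial in $A$, it commutes with $A$; combining this with $AJ = dJ$ yields the commutator identity $PA - AP = JA - dJ$, whose $(u,v)$ entry reads
\[
A_{o(u), v} - \sum_{w \in o^{-1}(v)} A_{u w} \;=\; d^-(v) - d.
\]
I would first rule out non-bijectivity of $o$ via a refined counting argument on this identity combined with the decomposition of each in-ball into disjoint distance-levels (valid by $k$-geodecity): any vertex $v_0$ with $|o^{-1}(v_0)| > 1$ imposes rigid constraints on the out-neighbourhoods of vertices in $o^{-1}(v_0)$ that conflict with the Moore structure of their out-trees. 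With bijectivity in hand, summing the entrywise identity over $u$ collapses it to $d^-(o^{-1}(v)) - d = -(n-1)(d^-(v) - d)$; iterating around an $r$-cycle of the permutation $o^{-1}$ gives $(d^-(v) - d)\bigl(1 - (-(n-1))^r\bigr) = 0$, and since $n \geq 3$ the second factor is nonzero, forcing $d^-(v) = d$ for every $v$.

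With in-regularity, $AJ = JA = dJ$, so $J$ commutes with $A$ and hence $P = J - (I + A + \cdots + A^k)$ is a polynomial in $A$ and $J$ that also commutes with $A$. The identity $AP = PA$ unpacks to $A_{ij} = A_{o(i), o(j)}$ for all $i, j$, which is exactly the condition that $o$ is a digraph automorphism. Finally, for the range statement the case $d = 1$ directly gives the directed $(k+2)$-cycle, $k = 1$ is the Moore case excluded by \cite{BriTou}, the exclusions $k \in \{3,4\}$ for $d \geq 2$ and $k = 2$ for $d \geq 8$ come from \cite{MilMirSil}, and the low-degree residue $d = 2$ is disposed of by direct inspection of the short out-tree. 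The anticipated main obstacle is the bijectivity step: prior to knowing that $o$ is a permutation the commutator identity does not itself pin down the in-degrees, and the pathological configurations arising from unequal preimage sizes must be ruled out by a delicate combinatorial analysis of the outlier structure.
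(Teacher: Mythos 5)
The paper does not prove this lemma: it is quoted verbatim from \cite{MilMirSil} and \cite{Sil}, so what you are really being asked for is a reconstruction of those published arguments. Much of your reconstruction is sound and follows the standard route. Out-regularity from the Moore tree is correct (a vertex of out-degree $>d$ would force more than $M(d,k)+1$ distinct vertices into some tree), the identity $I+A+\cdots+A^k=J-P$ is exactly the one the paper itself uses later (Equation (2)), and your derivation that $o$ is an automorphism from $AP=PA$ (valid once $A$ commutes with $J$, i.e.\ once diregularity is known, and once $P$ is a genuine permutation matrix) is the classical argument of Baskoro et al.\ for almost Moore digraphs transplanted to excess one. The iteration of the commutator identity around the cycles of $o$, giving $(d^-(v)-d)\bigl(1-(-(n-1))^r\bigr)=0$, is a clean way to finish in-regularity \emph{provided} $o$ is already known to be a bijection.

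That proviso is where the genuine gap sits, and you have correctly located but not filled it. Bijectivity of $o$ is equivalent to surjectivity, i.e.\ to the statement that no vertex $v$ is reachable within $k$ steps from all $n$ vertices; equivalently, that every column sum of $I+A+\cdots+A^k$ equals $n-1$. Since only the out-degrees are controlled at this stage, the column sums are governed by the unknown in-degrees, and the global count $\sum_v\bigl(n-|o^{-1}(v)|\bigr)=n(n-1)$ gives no pointwise information. Your proposal replaces the needed argument with a description of one (``a refined counting argument \dots rigid constraints \dots conflict with the Moore structure''), which is not a proof; this step is precisely the substance of Sillasen's diregularity theorem in \cite{Sil} and takes real combinatorial work there. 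Relatedly, your remark that the case $d=2$ is ``disposed of by direct inspection of the short out-tree'' substantially understates that result, which is again the main theorem of \cite{Sil}; for a citation lemma it would be both more honest and fully adequate to defer these two points to \cite{Sil} explicitly rather than to gesture at arguments that are not supplied.
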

Further results on the problem and examples of geodetic cages can be found in \cite{Tui,Tui2,Tui3,TuiErs}. As no non-trivial digraphs with excess one have been found, we make the following conjecture.

\begin{conjecture}\label{no digraphs with excess one}
	There are no $(d,k;+1)$-digraphs with $d,k \geq 2$.
\end{conjecture}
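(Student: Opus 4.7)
Suppose for contradiction that $G$ is a $(d,k;+1)$-digraph with $d,k\geq 2$ and $G\neq C_{k+2}$. Lemma~\ref{lem:known results} already provides a strong structural skeleton: $G$ is diregular, the outlier function $o$ is an automorphism, and the admissible parameters are $k=2$ with $3\leq d\leq 7$ or $d\geq 3$ with $k\geq 5$. The first step of the proof would be to encode the geodecity condition algebraically. Let $A$ be the adjacency matrix of $G$ and $P$ the permutation matrix of $o$. Counting walks of length at most $k$ from each vertex (and using the Moore tree in the introduction) yields the key identity
\[ I+A+A^2+\cdots +A^k \;=\; J-P, \]
where $J$ is the all-ones matrix. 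I would treat this as the central equation of the argument.

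Next I would perform a case split on the cycle structure of the permutation $o$. Writing $n_j$ for the number of $j$-cycles of $o$ we have $\sum_j j n_j=M(d,k)+1$, and the cases to handle are (i) $o$ has a fixed point, (ii) $o$ is an involution, (iii) $o$ has a cycle of length $\ell\geq 3$. The abstract already credits the present paper with disposing of cases (i) and (ii) (including the small open values $k=2$, $d\in\{3,\dots,7\}$); the remaining effort is concentrated in case (iii). Here I would refine the local arguments from \cite{MilMirSil,Sil} by examining, for each vertex $u$ on a long $o$-cycle, the Moore tree rooted at $u$ and comparing the positions of $o(u),o^{2}(u),\dots ,o^{\ell-1}(u)$, using the fact that $o$ is a graph automorphism to transport the entire tree structure along the cycle.

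The algebraic half of the argument would then go through the spectrum. Since $o$ is an automorphism, $P$ commutes with $A$, so the two matrices can be simultaneously unitarily triangularised. Each non-principal eigenvalue $\lambda$ of $A$ therefore satisfies
\[ 1+\lambda+\lambda^{2}+\cdots+\lambda^{k} \;=\; -\omega \]
for some $\ell$-th root of unity $\omega$ on the corresponding $P$-eigenspace. Taking traces of successive powers of the identity and using that $\trace(A^{i})$ counts closed walks of length $i$ in $G$ (which, thanks to $k$-geodecity, can be expressed purely in terms of $d$, $k$ and the cycle lengths of $o$) would produce a system of Diophantine relations in the $n_j$. I would then argue that these relations, together with the local Moore-tree constraints from case (iii), are inconsistent for every admissible pair $(d,k)$.

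The principal obstacle is unquestionably case (iii). The polynomials $\Phi_k(\lambda)=1+\lambda+\cdots +\lambda^{k}$ evaluated at roots of unity are notoriously badly behaved arithmetically, and showing that no choice of $\omega$ produces an eigenvalue $\lambda$ consistent with the integrality of $\trace(A^{i})$ for \emph{all} $i$ and for \emph{every} $d\geq 3$, $k\geq 5$ is precisely the sort of step that defeated Bannai and Ito's original attack in the undirected setting until additional number-theoretic input was brought in. A realistic pathway is to use the local/combinatorial constraints to force $\ell$ and $k$ into a small finite range, then to eliminate those sporadic pairs either by deeper cyclotomic arguments or by exhaustive computer search, in the spirit of the handling of the open values $k=2$, $d\leq 7$ in the present paper.
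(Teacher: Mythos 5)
The statement you have been asked to prove is labelled a \emph{conjecture} in the paper, and the paper does not prove it; it only establishes partial results (non-existence for $k=2$ with $3\leq d\leq 7$, non-existence of $2$-outlier-regular digraphs, and structural restrictions on the outlier permutation of a minimal counterexample). So there is no proof in the paper to compare yours against, and a complete argument from you would have been a genuine advance. What you have written is not that: it is a research programme, and you say so yourself. The decisive gap is your case (iii), where $o$ contains a cycle of length $\ell\geq 3$. You correctly identify the central identity $I+A+\cdots+A^k=J-P$ and the resulting spectral condition $1+\lambda+\cdots+\lambda^k=-\omega$ for roots of unity $\omega$, but you give no argument that the resulting Diophantine system is inconsistent, nor any mechanism that would force $\ell$ and $k$ into a finite range. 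Without that, nothing is proved beyond what the cited literature and the present paper already contain. Note also that your case (i) is vacuous rather than a case the paper "disposes of'': the outlier function is fixed-point-free by definition, since every vertex trivially reaches itself by a walk of length $0\leq k$.

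Two smaller points. First, your case (ii) corresponds to the paper's Theorem on $2$-outlier-regular digraphs, which is proved by a trace computation exploiting the irreducibility of $p+x+\cdots+x^k$ over $\mathbb{Q}$; if you intend to lean on that result you should be aware that the analogous irreducibility input is exactly what is unavailable for general cycle lengths $\ell\geq 3$, which is why the paper instead restricts the \emph{permutation structure} of $o$ via the automorphism fix-set machinery (Corollary~\ref{automorphism fixset} and its consequences) rather than attacking the spectrum head-on. Second, your suggestion to "transport the Moore tree along the $o$-cycle'' is essentially Lemma~\ref{divisibility of orders} and Corollary~\ref{paths between minimal vertices} of the paper; those yield divisibility constraints on vertex orders, not a contradiction, and in the paper they only suffice to eliminate specific small parameter pairs. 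As it stands your proposal does not prove the conjecture, and the conjecture remains open.
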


It is instructive to compare this problem with the state of the art of the directed degree/diameter problem (a survey of which can be found in \cite{MilSir}), which asks for the largest digraphs with given maximum degree $d$, diameter $k$ and order $M(d,k)-\delta $ for some small \emph{defect} $\delta $. A digraph with defect $\delta = 1$ is a \emph{$(d,k;-1)$-digraph} or an \emph{almost Moore digraph}. 

In the Moore tree of a $(d,k;-1)$-digraph rooted at a vertex $u$ there will be a unique vertex $r(u)$ that is repeated in the tree; this vertex is called the \emph{repeat} of $u$ and the associated \emph{repeat function} $r:V(G) \rightarrow V(G)$ is also a digraph automorphism \cite{BasMilPleZna}. All $(d,k;-1)$-digraphs are diregular \cite{MilGimSirSla}.  By contrast with the degree/geodecity problem, for all $d \geq 3$ there is a unique $(d,2;-1)$-digraph \cite{FioYebAle,Gim,Gim2}; however there are no $(d,k;-1)$-digraphs for $d \geq 2$ and diameters $k = 3,4$ \cite{ConGimGonMirMor,ConGimGonMirMor2}. The existence of $(2,k;-1)$-digraphs with $k \geq 3$ was ruled out in \cite{MilFri}, as were $(3,k;-1)$-digraphs with $k \geq 3$ in \cite{BasMilSirSut}.

The plan of this paper is as follows. Firstly in Section \ref{section:VT} we use counting arguments to deduce some strong conditions on digraphs with excess one and a high level of symmetry. In Section \ref{section:degree three} we investigate the structure of digraphs with degree three and excess one. Then in Section \ref{Automorphisms} we use the approach of Sillasen in \cite{Sil2} to analyse the set of vertices fixed by an automorphism of a $(d,k;+1)$-digraph. As the outlier function $o$ is an automorphism of $G$ these results tell us quite a lot about the structure of $o$ as a permutation. In the final part of this paper, Section \ref{section:spectrum digraphs excess one}, we use a spectral approach to exploit the results of Section \ref{Automorphisms} to prove the non-existence of certain $(d,k;+1)$-digraphs.

\section{Vertex-transitive digraphs with excess one}\label{section:VT}

All of the known Moore graphs are vertex-transitive. This suggests that it is of interest to look for digraphs with order close to the directed Moore bound that have a high degree of symmetry. In her thesis \cite{SilThesis} Sillasen uses this approach on digraphs with defect one; we now emulate this approach for digraphs with excess one.  

In \cite{SilThesis} as a basis for her counting arguments Sillasen divides the vertices of an almost Moore digraph into two types, Type I and Type II. Adapting this notation, we make the following definition.  

\begin{definition}
	A vertex $u$ of a $(d,k;+1)$-digraph is Type II if $o(u) \rightarrow u$; otherwise $u$ is Type I. 	
\end{definition}
The type of a vertex is preserved by any automorphism of $G$. This leads us to the following observation.

\begin{observation}
	If a $(d,k;+1)$-digraph $G$ is vertex-transitive, then either every vertex of $G$ is Type I or every vertex of $G$ is Type II. 
\end{observation}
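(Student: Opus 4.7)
The plan is to first verify the claim stated immediately before the observation, namely that the type of a vertex is preserved by any automorphism of $G$, and then deduce the conclusion directly from vertex-transitivity.

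First I would show that the outlier function $o$ commutes with every automorphism $\sigma$ of $G$. This follows from the characterisation of $o(u)$ as the \emph{unique} vertex $v$ with $d(u,v) \geq k+1$: since $\sigma$ preserves distances in $G$, we have $d(\sigma(u),\sigma(o(u))) = d(u,o(u)) \geq k+1$, and by the uniqueness of such a vertex this forces $\sigma(o(u)) = o(\sigma(u))$. (Alternatively, Lemma~\ref{lem:known results} already guarantees that $o$ is itself an automorphism of $G$, and one can argue that $o$ lies in the centre of $\aut(G)$ by the same uniqueness argument.)

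Next I would use this to show that types are automorphism-invariant. Suppose $u$ is Type II, so that the arc $o(u) \rightarrow u$ is present in $G$. Applying the automorphism $\sigma$ to this arc yields an arc $\sigma(o(u)) \rightarrow \sigma(u)$, and by the commutation relation just established this is the same as the arc $o(\sigma(u)) \rightarrow \sigma(u)$. Hence $\sigma(u)$ is Type II. The contrapositive handles the Type I case.

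Finally, assume $G$ is vertex-transitive and fix any two vertices $u,v$ of $G$. By vertex-transitivity there is an automorphism $\sigma$ with $\sigma(u) = v$, and by the previous step $u$ and $v$ have the same type. Therefore all vertices of $G$ share a common type, which is the desired conclusion. The only subtle point is the commutation of $o$ with automorphisms, but since this rests only on the uniqueness characterisation of the outlier, no substantial obstacle arises.
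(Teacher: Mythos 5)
Your proposal is correct and follows essentially the same route as the paper, which simply asserts that vertex type is preserved by automorphisms and deduces the observation from vertex-transitivity; you have merely supplied the (correct) justification via the commutation $\sigma(o(u)) = o(\sigma(u))$, which follows from the uniqueness of the outlier.
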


If every vertex of $G$ is Type I, then we can obtain a strong divisibility condition on $d$ and $k$. The reason for this is that all directed $(k+1)$-cycles of $G$ are arc-disjoint. 

\begin{lemma}\label{no more than one k-cycle}
	No arc of $G$ is contained in more than one directed $(k+1)$-cycle.
\end{lemma}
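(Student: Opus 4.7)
The plan is to derive an immediate contradiction with the $k$-geodecity of $G$. Suppose for contradiction that some arc $u \rightarrow v$ of $G$ lies in two distinct directed $(k+1)$-cycles $C_1$ and $C_2$. Rotate each cycle so that $u \rightarrow v$ appears as its first arc; then $C_i$ has the form $u \rightarrow v \rightarrow x^{(i)}_2 \rightarrow \dots \rightarrow x^{(i)}_k \rightarrow u$. Deleting the initial arc from each $C_i$ produces a directed walk $W_i$ of length exactly $k$ from $v$ to $u$, and since $C_1 \neq C_2$ share the same initial arc, their remainders $W_1$ and $W_2$ must differ as sequences of vertices. Hence we obtain two distinct directed walks of length $\leq k$ from $v$ to $u$, which directly contradicts the definition of $k$-geodecity. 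No additional structure of $G$ (such as its excess-one property, the outlier function, or the Type of any vertex) needs to be invoked.

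Since the argument reduces to a one-line application of the $k$-geodetic hypothesis, there is no genuine obstacle to overcome, and I would expect the paper to present essentially this proof. The only minor point worth flagging is ensuring that $W_1$ and $W_2$ are well-defined as walks of length exactly $k$: this follows because each $C_i$ is a directed cycle of length $k+1$, so its arc-complement from $v$ back to $u$ has precisely $k$ arcs. If one wished to be fully explicit, one could also note that the existence of such a walk from $v$ to $u$ forces $u \neq o(v)$, so the $(k+1)$-cycles through a given arc are in any case constrained by the outlier function; but this observation is not needed for the conclusion and is better deferred to the subsequent lemmas where the divisibility conditions on $d$ and $k$ for vertex-transitive Type I digraphs are extracted.
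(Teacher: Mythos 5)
Your proof is correct and is essentially the paper's own argument: delete the arc $u \rightarrow v$ from each $(k+1)$-cycle to obtain two distinct walks of length $k$ from $v$ to $u$, contradicting $k$-geodecity. The paper states this in one line; your additional remarks about the outlier function are, as you note, unnecessary here.
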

\begin{proof}
	Suppose that an arc $(u,v)$ is contained in two $(k+1)$-cycles.  Then there are two distinct $k$-paths from $v$ to $u$, which contradicts $k$-geodecity.
\end{proof}

\begin{lemma}
	Any arc $(u,v)$ of $G$ such that $u \not = o(v)$ lies in a unique $(k+1)$-cycle.
\end{lemma}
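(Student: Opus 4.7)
The plan is to translate the existence of a $(k+1)$-cycle through the arc $(u,v)$ into the existence of a directed $k$-walk from $v$ back to $u$, and then verify that there is exactly one such walk.

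For existence, I would exploit the Moore tree rooted at $v$. By the Moore bound argument in the introduction, together with $k$-geodecity, this tree contains $M(d,k)$ distinct vertices at levels $0,1,\dots,k$, so its complement in $V(G)$ consists of the single vertex $o(v)$. Since by hypothesis $u \neq o(v)$, the vertex $u$ appears at some level $t$ with $0 \leq t \leq k$, which yields a walk $W$ of length $t$ from $v$ to $u$. By $k$-geodecity applied to the pair $(v,u)$, the walk $W$ is the unique walk of length at most $k$ between this pair, so uniqueness of the $(k+1)$-cycle will follow from uniqueness of $W$ once $t=k$ is established.

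To force $t = k$, I would close the walk up: appending the arc $(u,v)$ to $W$ produces a non-trivial closed walk at $v$ of length $t+1$. The $k$-geodetic condition applied to the pair $(v,v)$, together with the trivial length-$0$ walk at $v$, rules out any directed cycle through $v$ of length at most $k$; hence $t+1 \geq k+1$. Combined with $t \leq k$, this forces $t = k$, and $W$ together with $(u,v)$ is the required unique directed $(k+1)$-cycle through the arc $(u,v)$.

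I do not anticipate any real obstacle: the argument uses nothing beyond $k$-geodecity (applied to $(v,u)$ and to $(v,v)$) and the defining property of the outlier $o(v)$ as the unique vertex at distance exceeding $k$ from $v$. The only mild subtlety is the convention that the trivial length-$0$ walk counts towards the $k$-geodetic quota at $(v,v)$, but this is already implicit in the derivation of the Moore bound, so the lemma drops out almost immediately from the setup.
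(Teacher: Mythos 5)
Your proof is correct and follows essentially the same route as the paper: the hypothesis $u \neq o(v)$ gives a $\leq k$-path from $v$ to $u$, $k$-geodecity at $(v,v)$ forces its length to be exactly $k$, and $k$-geodecity at $(v,u)$ gives uniqueness of the resulting $(k+1)$-cycle. You merely make explicit two steps the paper leaves implicit (why the path has length exactly $k$, and why the closed walk is a cycle), which is fine.
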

\begin{proof}
	Let $(u,v)$ be such an arc. As $u \not = o(v)$ there is a path of length $k$ in $G$ from $v$ to $u$, so the arc $(u,v)$ is contained in a $(k+1)$-cycle, which is unique by Lemma \ref{no more than one k-cycle}.
\end{proof}

\begin{corollary}\label{divisibility corollary}
	Suppose that every vertex of $G$ is Type I.  Then $(k+1)$ divides $d(M(d,k)+1) = 2d+d^2+d^3+\dots +d^{k+1}$.
\end{corollary}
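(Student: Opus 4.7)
My plan is to count the arcs of $G$ in two ways: directly from diregularity, and by partitioning them amongst the directed $(k+1)$-cycles of $G$. By Lemma~\ref{lem:known results} the digraph $G$ is diregular of out-degree $d$, and its order is $M(d,k)+1$, so the total arc count is exactly $d(M(d,k)+1)$.

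For the second count I first want to argue that, under the all-Type-I hypothesis, every arc of $G$ lies in a unique directed $(k+1)$-cycle. The preceding lemma already handles every arc $(u,v)$ with $u \neq o(v)$. The only arcs it does not cover are those with $u = o(v)$, but this condition is exactly the statement that $v$ is a Type~II vertex (namely $o(v) \to v$), and the hypothesis that no vertex of $G$ is Type~II rules such arcs out. Together with Lemma~\ref{no more than one k-cycle}, which ensures that distinct $(k+1)$-cycles share no arc, this shows that the directed $(k+1)$-cycles of $G$ partition its arc set.

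Each $(k+1)$-cycle contributes $k+1$ arcs, so if $N$ denotes the number of $(k+1)$-cycles of $G$ then the total arc count equals $(k+1)N$. Comparing the two expressions gives $(k+1) \mid d(M(d,k)+1)$, and expanding $d(M(d,k)+1) = d + d(1+d+d^2+\cdots+d^k) = 2d + d^2 + d^3 + \cdots + d^{k+1}$ recovers the form stated in the corollary. This is a one-line double counting, and there is no real obstacle; the only subtlety is translating ``$u = o(v)$'' into ``$v$ is Type~II'' in order to invoke the hypothesis and absorb the leftover arcs from the previous lemma.
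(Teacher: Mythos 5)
Your proof is correct and follows the same route as the paper: the arcs of $G$ are partitioned into directed $(k+1)$-cycles (using the all-Type-I hypothesis to show every arc lies in such a cycle, and Lemma~\ref{no more than one k-cycle} for disjointness), and the size $d(M(d,k)+1)$ is then divisible by $k+1$. Your explicit translation of ``$u=o(v)$'' into ``$v$ is Type II'' is exactly the step the paper leaves implicit.
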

\begin{proof}
	As all vertices of $G$ are Type I, by Lemma \ref{no more than one k-cycle} we can partition the arcs of $G$ into disjoint $(k+1)$-cycles. Therefore the size $m = d(M(d,k)+1)$ of $G$ is divisible by $k+1$.   
\end{proof}

Computer search shows that for $k$ between 2 and 10000, the following values of $d$ and $k$ satisfy this condition:
\newline $d = 3$: $k = 2,20,146,902,1028,6320,7202$,
\newline $d = 4$: $k = 3,7,87,171,472,2647$,
\newline $d = 5$: $k= 4,84,114$,
\newline $d = 6$: $k = 2,3,5,7,11,23,32,51,203,347,1095,3323,3767,4903,9563$,
\newline $d = 7$: $k = 6,76,118,2568$,
\newline $d = 8$: $k = 3,7,9,15,87,463,1171$,
\newline $d = 9$: $k = 2,8,68$,
\newline $d = 10$: $k = 3,4,7,9,15,19,39,79,555,1069,2314,2986,4659$,  
\newline $d = 11$: $k = 10$,
\newline $d = 12$: $k = 2,3,5,7,11,13,23,55,91,163,236,1235,1356$.

We see that the condition in Corollary \ref{divisibility corollary} is quite strong. On the other hand, if $G$ contains a Type II vertex then these vertices group themselves into cycles in a natural way. 

\begin{lemma}\label{Type-II outneighbours}
	Suppose that $G$ contains a Type II vertex $u$.  Then $u$ has a unique Type II out-neighbour, namely $o^-(u)$. 
\end{lemma}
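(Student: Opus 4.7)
The plan is to prove existence and uniqueness separately, with the existence of $o^{-1}(u)$ as a Type II out-neighbour following immediately from the definition of Type II combined with $o$ being an automorphism, and uniqueness coming from a short two-walk argument from the outlier $o(u)$.

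For existence, I would start from the fact that $u$ being Type II means the arc $o(u) \to u$ is present in $G$. Since $o$ is a digraph automorphism by Lemma \ref{lem:known results}, applying $o^{-1}$ to this arc produces $u \to o^{-1}(u)$, so $o^{-1}(u)$ is an out-neighbour of $u$. Reading the same arc as $o(o^{-1}(u)) \to o^{-1}(u)$ also certifies $o^{-1}(u)$ itself as Type II.

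For uniqueness, I would argue by contradiction. Assume that $w$ is a Type II out-neighbour of $u$ with $w \neq o^{-1}(u)$, equivalently $o(w) \neq u$. The key observation is that applying the automorphism $o$ to the arc $u \to w$ yields the new arc $o(u) \to o(w)$. Combining this with $o(u) \to u$ (from $u$ Type II), $u \to w$ (from $w$ being an out-neighbour of $u$), and $o(w) \to w$ (from $w$ Type II), one obtains the two length-two walks
\[
o(u) \to u \to w \qquad \text{and} \qquad o(u) \to o(w) \to w.
\]
Because $u \neq o(w)$ these walks are distinct, and since $k \geq 2$ in every case allowed by Lemma \ref{lem:known results}, both have length at most $k$, contradicting $k$-geodecity.

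The only delicate step I anticipate is spotting $o(u)$ as the appropriate common source from which two length-two walks to $w$ arise; once this is seen, the argument is essentially immediate and requires no analysis of the Moore tree or of the orbit structure of $o$. I therefore expect a short, self-contained proof with no serious obstacle beyond identifying this two-walk configuration.
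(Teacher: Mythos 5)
Your proposal is correct and follows essentially the same route as the paper: applying $o^{-}$ to the arc $o(u)\to u$ to obtain the Type II out-neighbour $o^{-}(u)$, and then using the two length-two walks $o(u)\to u\to u'$ and $o(u)\to o(u')\to u'$ together with $k$-geodecity to force $o(u')=u$. The only cosmetic difference is that you phrase uniqueness as a contradiction while the paper concludes $o(u')=u$ directly.
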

\begin{proof}
	Applying the automorphism $o^-$ to the arc $(o(u),u)$, we see that $(u,o^-(u))$ is also an arc.  By inspection, $o^-(u)$ is a Type II vertex.  Suppose that $u'$ is an arbitrary Type II out-neighbour of $u$.  As $(u,u')$ is an arc, so is $(o(u),o(u'))$.  As $u'$ is Type II, $(o(u'),u')$ is an arc.  We therefore have paths $o(u) \rightarrow u \rightarrow u'$ and $o(u) \rightarrow o(u') \rightarrow u'$, so by $k$-geodecity we must have $o(u') = u$, i.e. $u' = o^-(u)$.
\end{proof}

It follows immediately that in a vertex-transitive $(d,k;+1)$-digraph all vertices must be Type I.

\begin{lemma}\label{all vertices type I if VT}
	If $G$ is vertex-transitive, then every vertex of $G$ is Type I.
\end{lemma}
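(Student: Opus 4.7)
The plan is to proceed by contradiction, exploiting the preceding \emph{Observation} that in a vertex-transitive $(d,k;+1)$-digraph every vertex has the same type. That observation already reduces the claim to ruling out the case in which every vertex of $G$ is Type~II, so I would focus exclusively on that case.

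Assume, for contradiction, that every vertex of $G$ is Type~II. Fix an arbitrary vertex $u$; by diregularity (guaranteed by Lemma~\ref{lem:known results}) it has exactly $d$ out-neighbours, and since every vertex of $G$ is Type~II each of those out-neighbours is itself a Type~II vertex. On the other hand, the preceding Lemma~\ref{Type-II outneighbours} asserts that any Type~II vertex has a \emph{unique} Type~II out-neighbour, namely $o^{-}(u)$. Combining these two facts forces $u$ to have only a single out-neighbour, contradicting the standing assumption $d\geq 2$ throughout the paper. This yields a direct contradiction, so the Type~II alternative cannot occur and every vertex of $G$ must be Type~I.

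There is essentially no obstacle here: the work has already been done in Lemma~\ref{Type-II outneighbours}, and the argument is a one-step corollary once the uniqueness of the Type~II out-neighbour is in place. The only subtlety worth flagging is that the argument genuinely needs $d\geq 2$, which is assumed throughout (and in fact $d\geq 3$ by Lemma~\ref{lem:known results}, so the contradiction is robust). No additional counting, spectral, or structural machinery is needed, and the proof should fit in two or three lines in the final write-up.
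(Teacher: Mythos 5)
Your proposal is correct and is essentially the paper's own argument: the paper likewise notes that vertex-transitivity forces all vertices to be Type II and then invokes Lemma~\ref{Type-II outneighbours} for the contradiction, merely leaving implicit the step you spell out (that uniqueness of the Type II out-neighbour would force $d=1$, contradicting $d\geq 2$). Your version just makes that final counting step explicit, which is a reasonable expansion of the same proof.
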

\begin{proof}
	Suppose that $G$ contains a Type II vertex; by vertex-transitivity, every vertex is Type II.  But this contradicts Lemma \ref{Type-II outneighbours}. 
\end{proof}

In particular, if $G$ is vertex-transitive, then it must satisfy the divisibility condition in Corollary \ref{divisibility corollary}.  
\begin{corollary}\label{divisibility cor}
	Let $G$ be a vertex-transitive $(d,k;+1)$-digraph.  Then $(k+1)$ divides $2d+d^2+d^3+\dots +d^{k+1}$.
\end{corollary}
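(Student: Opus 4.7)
The plan is short: this corollary is essentially the formal composition of Lemma \ref{all vertices type I if VT} with Corollary \ref{divisibility corollary}, so no new geometric content is required. First I would invoke Lemma \ref{all vertices type I if VT}, which already tells us that vertex-transitivity of $G$ forces every vertex to be Type I. The underlying reason (which I would quickly recall for the reader) is that Type is an automorphism invariant, so under vertex-transitivity either all vertices are Type I or all are Type II; the latter is ruled out by Lemma \ref{Type-II outneighbours}, since each Type II vertex $u$ has $d \geq 2$ out-neighbours but only one of them, namely $o^{-}(u)$, can be Type II.

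Having reduced to the all-Type-I case, I would apply Corollary \ref{divisibility corollary} verbatim to conclude that $(k+1) \mid d(M(d,k)+1)$. Expanding
\[ d(M(d,k)+1) = d(2 + d + d^2 + \cdots + d^k) = 2d + d^2 + d^3 + \cdots + d^{k+1} \]
gives the claimed divisibility. The only thing to check is that the hypothesis of Corollary \ref{divisibility corollary} is precisely the conclusion of Lemma \ref{all vertices type I if VT}, which it is, so there is no genuine obstacle here; the proof is effectively a one-line chaining of the two earlier results.
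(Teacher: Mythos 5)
Your proof is correct and is exactly the argument the paper intends: Lemma \ref{all vertices type I if VT} forces every vertex to be Type I, and Corollary \ref{divisibility corollary} then gives $(k+1)\mid d(M(d,k)+1)=2d+d^2+\cdots+d^{k+1}$. Nothing further is needed.
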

In fact, we can significantly extend Corollary \ref{divisibility corollary} using the fact that for any vertex $u$ of a vertex-transitive $(d,k;+1)$-digraph the vertex $o^-(u)$ cannot be close to $u$.

\begin{lemma}\label{VT distance to o^-}
	If $G$ is a vertex-transitive $(d,k;+1)$-digraph, then for any vertex $u$ of $G$ we have $d(u,o^-(u)) \geq k$.
\end{lemma}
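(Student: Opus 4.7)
The plan is to argue by contradiction: assume $d(u, o^-(u)) = t \leq k-1$. The key preliminary observation is that $o$ commutes with every digraph automorphism, because $o(v)$ is intrinsically characterised as the unique vertex at distance $\geq k+1$ from $v$, a condition preserved by distance-preserving maps. Combined with vertex-transitivity, this forces $d(v, o^-(v)) = t$ for every $v \in V(G)$. The strategy is then to exploit this uniform constancy to exhibit two distinct walks of length $\leq k$ between the same pair of vertices, contradicting $k$-geodecity.

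Next I would fix a shortest path $P: u = a_0 \to a_1 \to \dots \to a_t = o^-(u)$. By Lemma~\ref{lem:known results} we have $d \geq 3$, so there exists an out-neighbour $w \in N^+(u) \setminus \{a_1\}$. Setting $v := o^-(w)$, I would construct two walks of length $t+1$ from $u$ to $v$. The first walk $W_1$ follows $P$ to $o^-(u)$ and then uses the arc $o^-(u) \to o^-(w) = v$, which is present because the automorphism $o^-$ carries the arc $u \to w$ to $o^-(u) \to o^-(w)$. The second walk $W_2$ begins with the arc $u \to w$ and then follows a shortest path of length $t$ from $w$ to $o^-(w) = v$, available because $d(w, o^-(w)) = t$ by the constancy established above. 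These two walks differ at their first step ($a_1$ versus $w$), have the same length $t+1 \leq k$, and share endpoints, which violates $k$-geodecity.

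The only subtle point, rather than a serious obstacle, is verifying $v \neq u$ so that $k$-geodecity in its standard pairwise form applies cleanly. But $v = u$ would force $w = o(u)$, and then the arc $u \to o(u)$ would imply $d(u,o(u)) = 1 < k+1$, contradicting the defining property of the outlier. Once $v \neq u$ is confirmed, the two-walk argument closes the proof; the only external inputs used are vertex-transitivity and the degree bound $d \geq 3$ supplied by Lemma~\ref{lem:known results}.
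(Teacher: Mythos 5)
Your proof is correct and is essentially the paper's own argument: the two walks $W_1$ (shortest path to $o^-(u)$ followed by the arc $o^-(u)\to o^-(w)$) and $W_2$ (the arc $u\to w$ followed by a shortest path to $o^-(w)$) are exactly the two appearances of $o^-(u_2)$ in distinct branches of the Moore tree that the paper uses, with $w$ playing the role of $u_2$. Your explicit justification that $d(v,o^-(v))$ is constant (via $o$ commuting with automorphisms) and the check that $v\neq u$ are welcome details the paper leaves implicit.
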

\begin{proof}
	Suppose that $d(u,o^-(u)) = t \leq k-1$. As $G$ is vertex-transitive, the distance from any vertex $v$ of $G$ to $o^-(v)$ is $t$. Writing $N^+(u) = \{ u_1,u_2,\dots ,u_d\}$, let $o^-(u) \in T_{k-2}(u_1)$. As $d(u_2,o^-(u_2)) = t \leq k-1$, we have $o^-(u_2) \in T(u_2)$. However, as $o^-$ is an automorphism of $G$, there is an arc $o^-(u) \rightarrow o^-(u_2)$, so that $o^-(u_2)$ also lies in $T(u_1)$. As $o^-(u_2)$ appears twice in the Moore tree, $k$-geodecity is violated. Hence $t \geq k$.       
\end{proof}

\begin{theorem}\label{divisibility for VT}
	For $1 \leq t \leq k-1$, let $Z_{k+t}$ be the number of distinct directed $(k+t)$-cycles in a vertex-transitive $(d,k;+1)$-digraph $G$. Then for $2 \leq t \leq k-1$ we have
	\begin{equation}\label{equation:general cycle counting}
		(M(d,k)+1)(d^{t}-d^{t-1}) = Z_{k+t}(k+t).	
	\end{equation}  	
\end{theorem}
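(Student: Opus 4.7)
My plan is to count closed walks of length $k+t$ based at a fixed vertex $u$ (this count is uniform across vertices by vertex-transitivity; call it $c_t$), show that for $2 \leq t \leq k-1$ every such walk is a simple $(k+t)$-cycle, and divide by the $k+t$ possible starting vertices on each cycle to obtain $Z_{k+t}(k+t) = (M(d,k)+1)\,c_t$. The main calculation then amounts to showing $c_t = d^t - d^{t-1}$.

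The evaluation of $c_t$ rests on an anti-diagonal constancy. A closed $(k+t)$-walk at $u$ decomposes uniquely as an $a$-walk from $u$ to a vertex $v$ followed by a $b$-walk from $v$ to $u$, for any split $a + b = k+t$. If $a,b \leq k$, then by $k$-geodecity each such sub-walk is determined by its endpoints, so
\[
c_t = |T_a(u) \cap T_b^-(u)|, \qquad T_b^-(u) := \{w \in V(G) : d(w,u) = b\},
\]
which depends only on $a + b = k+t$ for $a \in \{t, \dots, k\}$. Now $|T_t^-(u)| = d^t$ by in-regularity and $k$-geodecity, and the outlier $o(u)$ does not lie in $T_t^-(u)$ for $t \leq k-1$, because applying the automorphism $o$ to Lemma \ref{VT distance to o^-} yields $d(o(u), u) \geq k$. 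Partitioning $T_t^-(u)$ by out-distance from $u$ and noting that terms with $j+t \leq k$ vanish (otherwise $u$ would admit two distinct closed walks of length $\leq k$), I obtain
\[
d^t = \sum_{j=k-t+1}^{k} |T_t^-(u) \cap T_j(u)| = c_1 + c_2 + \cdots + c_t
\]
for $1 \leq t \leq k-1$. Taking successive differences yields $c_t = d^t - d^{t-1}$ for $2 \leq t \leq k-1$.

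It remains to verify that for these $t$ every closed $(k+t)$-walk $x_0, x_1, \dots, x_{k+t} = x_0$ is a simple cycle. Both the $k$-prefix $x_0 \cdots x_k$ and the $t$-suffix $x_k \cdots x_{k+t}$ are simple paths by $k$-geodecity, so any repeat must be of the form $x_i = x_j$ with $1 \leq i \leq k-1$ and $k+1 \leq j \leq k+t-1$. The induced closed subwalks at $u$ (of length $i + k + t - j$) and at $x_k$ (of length $j - i$) must each exceed $k$ to avoid contradicting $k$-geodecity, which forces $j - t < i < j - k$. This open interval is empty for $t \leq k$, and hence certainly for $t \leq k-1$. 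The main obstacle is precisely this final step of ruling out vertex repetitions in the closed walk, but the pair of $k$-geodecity constraints exactly pinches the admissible range of $(i,j)$ to nothing in the stated range of $t$.
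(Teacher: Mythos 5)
Your proof is correct, and its skeleton---count the closed $(k+t)$-walks through a fixed vertex, show each is a simple cycle, then double-count incidences---is the same as the paper's. Where you genuinely diverge is in evaluating the per-vertex count $c_t$. The paper works in the out-Moore tree rooted at $u$: it calls a Level-$t$ vertex \emph{short} if its distance back to $u$ is at most $k-1$ and \emph{long} otherwise, observes that each vertex at Level $t\leq k-2$ has exactly one short out-neighbour (the second vertex of its unique return geodesic), and concludes that Level $t$ contains $d^{t-1}$ short and hence $d^{t}-d^{t-1}$ long vertices, the long ones corresponding bijectively to the closed $(k+t)$-walks. You instead partition the in-sphere $T_t^-(u)$, of size $d^t$ by diregularity, according to out-distance from $u$, and use the anti-diagonal invariance of $|T_a(u)\cap T_b^-(u)|$ to recognise each block as some $c_s$, obtaining the telescoping identity $d^t=c_1+\cdots+c_t$ and then differencing. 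The two computations are dual (one partitions the out-sphere by in-distance, the other the in-sphere by out-distance) and rest on the same two ingredients: $k$-geodecity and the consequence of Lemma \ref{VT distance to o^-} that no sphere of radius $\leq k-1$ contains an outlier. Your version buys a slightly cleaner bookkeeping, avoiding the ``one short child per parent'' induction, and you also make explicit the step the paper dismisses with ``by $k$-geodecity'': that a closed $(k+t)$-walk with $t\leq k-1$ cannot revisit a vertex, via the empty interval $j-t<i<j-k$. Both arguments are sound.
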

\begin{proof}
	Let $u$ be any vertex of $G$ and draw the Moore tree of depth $k$ rooted at $u$. By Lemma \ref{VT distance to o^-} all vertices in $T(u)$ have a path of length $\leq k$ to $u$. For $1 \leq t \leq k-1$, let us say that a vertex $v$ at Level $t$ of the Moore tree is \emph{short} if $d(v,u) \leq k-1$ and \emph{long} if $d(v,u) = k$. 
	
	All vertices of $N^+(u)$ must be long by $k$-geodecity. It is easily seen that for $1 \leq t \leq k-2$ every vertex at Level $t$ has one short out-neighbour and $d-1$ long out-neighbours at Level $t+1$. By induction for $2 \leq t \leq k-1$ there are $d^{t-1}$ short vertices in Level $t$ of the tree. Therefore for $2 \leq t \leq k-1$ the vertex $u$ is contained in $d^{t}-d^{t-1}$ closed walks of length $k+t$ and these walks must be cycles by $k$-geodecity. Equation \ref{equation:general cycle counting} follows by double-counting pairs $(u,Z)$, where $u$ is a vertex of $G$ and $Z$ is a directed $(k+t)$-cycle containing $u$.      	
\end{proof}

Using the strengthened divisibility condition in Theorem \ref{divisibility for VT}, computer search shows that the only values of $d$ and $k$ in the range $3 \leq d \leq 12$ and $2 \leq k \leq 10000$ for which there can exist a vertex-transitive $(d,k;+1)$-digraph are 
\newline $k = 2$: $d = 3,6,9,12$,
\newline $k = 3$: $d = 6,10$.
\newline The results of \cite{MilMirSil} show that in practice there are no $(d,3;+1)$-digraphs for $d \geq 2$ and no $(d,2;+1)$-digraphs for $d \geq 8$, so the only remaining values of $d$ and $k$ in this range are $(d,k) = (3,2)$ and $(6,2)$. In Section \ref{(d,2;+1)-digraphs} we will see that such digraphs also do not exist. This scarcity of vertex-transitive $(d,k;+1)$-digraphs can be taken as evidence in favour of Conjecture \ref{no digraphs with excess one}.

There are some simple number-theoretic conditions on $d$ and $k$ that force $G$ to contain a Type II vertex, so that $G$ cannot be vertex-transitive.
\begin{theorem}\label{general conditions for Type II vertex}
	If $d \geq 3$ and $k \geq 2$ satisfy any of the following conditions then any $(d,k;+1)$-digraph contains a Type II vertex:
	\begin{itemize}
		\item i) $d$ and $k$ are odd,
		\item ii) $d \equiv 1 \pmod{k + 1}$ or $d \equiv -1 \pmod{k + 1}$, 
		\item iii) $d^2 | (k+1)$, or
		\item iv) there is an odd prime $p$ such that $p | (k+1)$ and $d \equiv 2 \pmod{p}$.
	\end{itemize}
\end{theorem}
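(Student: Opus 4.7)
The plan is to argue by the contrapositive of Corollary \ref{divisibility corollary}. If $G$ contained no Type II vertex, then every vertex of $G$ would be Type I, and so $(k+1)$ would have to divide
\[ S := 2d + d^2 + d^3 + \dots + d^{k+1}. \]
I would verify that under each of the four listed conditions this divisibility actually fails, thereby forcing the existence of at least one Type II vertex.

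For (i), I reduce modulo $2$, using that $k+1$ is even. As $d$ is odd, each of the $k$ terms $d^2, d^3, \dots, d^{k+1}$ is odd while $2d$ is even, so $S \equiv k \equiv 1 \pmod{2}$ and hence $(k+1) \nmid S$. For (ii), I substitute $d \equiv \pm 1 \pmod{k+1}$ directly into $S$. The case $d \equiv 1$ gives $S \equiv 2 + k \equiv 1 \pmod{k+1}$, while the case $d \equiv -1$ gives $S \equiv -2 + \sum_{i=2}^{k+1}(-1)^i$, which reduces to $-2$ or $-1 \pmod{k+1}$ according to the parity of $k$; since $k+1 \geq 3$, neither residue is $0$.

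For (iii), I reduce modulo $d^2$: every term $d^i$ with $i \geq 2$ vanishes and $S \equiv 2d \pmod{d^2}$. Since $d \geq 3$ satisfies $0 < 2d < d^2$, we conclude $d^2 \nmid S$, which combined with $d^2 \mid (k+1)$ prevents $(k+1) \mid S$. For (iv), I reduce modulo the odd prime $p$. Putting $d \equiv 2 \pmod{p}$ and summing the geometric progression $2^2 + 2^3 + \dots + 2^{k+1} = 2^{k+2} - 4$ gives
\[ S \equiv 4 + (2^{k+2} - 4) = 2^{k+2} \pmod{p}. \]
Since $p$ is odd it cannot divide a power of $2$, so $p \nmid S$ and a fortiori $(k+1) \nmid S$.

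There is no real obstacle here: each item is a small modular-arithmetic calculation pressed against the structural constraint of Corollary \ref{divisibility corollary}, and the only mildly delicate step is the parity split in (ii). The overall moral is that the divisibility condition supplied by Corollary \ref{divisibility corollary} is very stringent, and a wide range of elementary number-theoretic hypotheses on $(d,k)$ suffice to violate it.
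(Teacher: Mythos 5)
Your proposal is correct and follows essentially the same route as the paper: invoke the contrapositive of Corollary \ref{divisibility corollary} and show by a modular computation that $(k+1)\nmid d(M(d,k)+1)$ under each hypothesis. The individual calculations (parity for (i), reduction mod $k+1$ for (ii), mod $d^2$ for (iii), and the geometric sum giving a power of $2$ mod $p$ for (iv)) all match the paper's, up to trivial rearrangement.
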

\begin{proof}
	For part i), suppose that $d$ and $k$ are odd, but $G$ contains only Type I vertices. By Corollary \ref{divisibility corollary}, $(k+1)$ divides $d(M(d,k)+1)$, so $M(d,k)$ must be odd. However $M(d,k) = 1+d+d^2+\dots +d^k$ contains an even number of odd summands and hence is even. 
	
	For part ii), suppose that $d \equiv 1 \pmod{k+1}$. Then \[ d(M(d,k)+1) =  d(2+d+d^2+\dots +d^{k}) \equiv k+2 \equiv 1 \pmod{k+1},\] so that $(k+1)$ does not divide $d(M(d,k)+1)$. Similarly if $d \equiv -1 \pmod{k+1}$, then $d(M(d,k)+1) \equiv -2+1-1+1-\dots +(-1)^{k+1} \equiv -2 \pmod{k+1}$ if $k$ is even and $d(M(d,k)+1) \equiv -1 \pmod{k+1}$ if $k$ is odd. 
	
	For part iii), if $d^2$ divides $(k+1)$ and $(k+1)$ divides $d(M(d,k)+1)$, then $d$ divides $M(d,k)+1$, which implies that $ d = 2$. However, we know that there are no $(2,k;+1)$-digraphs \cite{Sil}. 
	
	Finally, for part iv) suppose that $p$ is an odd prime such that $p|(k+1)$ and $d \equiv 2 \pmod{p}$; then if every vertex is Type I we must have \[ 0 \equiv M(d,k)+1 \equiv M(2,k)+1 = 2^{k+1} \pmod{p}, \]	
	implying that $p$ is even, a contradiction.
\end{proof}
These results can be extended to larger values of the excess $\epsilon $ under the assumption of arc-transitivity.

\begin{lemma}\label{arc-transitive lemma}
	Let $\epsilon < d$ and let $G$ be an arc-transitive $(d,k;+\epsilon )$-digraph.  Then for all $u \in V(G)$ we have $O(u) = O^-(u)$.
\end{lemma}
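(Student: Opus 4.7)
The plan is to prove the stronger inclusion $O(u) \subseteq O^-(u)$ for every vertex $u$; equality then follows because $|O(u)| = |O^-(u)| = \epsilon$ (arc-transitivity forces vertex-transitivity, hence diregularity, hence both sets have size $\epsilon$). Equivalently, I will show that the in-tree $R^-(u) := \{v : d(v,u) \leq k\}$ is disjoint from $O(u)$.

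The engine is the action of the vertex-stabiliser $\Gamma_u$ on the in-distance level sets $L_t^-(u) := \{v : d(v,u) = t\}$. Arc-transitivity supplies the base fact that $\Gamma_u$ is transitive on $L_1^-(u) = N^-(u)$: any two in-arcs $(v_1,u), (v_2,u)$ can be exchanged by an automorphism that is forced to fix $u$. From here I intend to prove, by induction on $t$, that every $\Gamma_u$-orbit on $L_t^-(u)$ has size at least $d$, for each $1 \leq t \leq k$. The inductive step uses a predecessor map $\pi : L_t^-(u) \to L_{t-1}^-(u)$ sending $v$ to its unique out-neighbour at in-distance $t-1$ from $u$. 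Uniqueness is the essential use of $k$-geodecity, since two such out-neighbours would yield two distinct $v \to u$ walks of length $t \leq k$; existence is automatic from $d(v,u) = t$. Because $\Gamma_u$ preserves in-distance to $u$, the map $\pi$ is $\Gamma_u$-equivariant, so it sends any $\Gamma_u$-orbit $\mathcal{Q} \subseteq L_t^-(u)$ surjectively onto a $\Gamma_u$-orbit $\pi(\mathcal{Q}) \subseteq L_{t-1}^-(u)$, giving $|\mathcal{Q}| \geq |\pi(\mathcal{Q})| \geq d$ by the inductive hypothesis.

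Granting this lower bound, the conclusion is immediate: $O(u)$ is a $\Gamma_u$-invariant set of cardinality $\epsilon < d$, so it cannot contain any $\Gamma_u$-orbit lying inside some $L_t^-(u)$ with $t \geq 1$. Since trivially $u \notin O(u)$, we get $O(u) \cap R^-(u) = \emptyset$, i.e. $O(u) \subseteq O^-(u)$, and thus equality.

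The main obstacle I anticipate is the clean set-up of the predecessor map $\pi$: one must rule out the degenerate possibility that $u$ itself is an in-neighbour of some $x \in L_{t-1}^-(u)$, since then $u \in N^-(x)$ would not lie in $L_t^-(u)$ and the fibre counts needed for the orbit argument would be distorted. This is handled by the standard observation that a $k$-geodetic digraph contains no directed closed walk of length in the range $1 \leq \ell \leq k$, so the configuration $x \in N^+(u) \cap L_{t-1}^-(u)$ (which would force a directed cycle of length $t \leq k$ through $u$) simply cannot occur.
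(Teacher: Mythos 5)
Your proof is correct, and it exploits the same symmetry as the paper's --- the stabiliser $\Gamma_u$ of $u$ acts transitively on the $d$ arcs at $u$, so a $\Gamma_u$-invariant set of size $\epsilon < d$ cannot meet the relevant Moore tree --- but you execute it in mirror image and with heavier bookkeeping. The paper proves the inclusion $O^-(u)\subseteq O(u)$: it takes a single $v\in O^-(u)$ lying in the out-tree $T(u)$, uses the $d-1$ automorphisms carrying the arc $(u,u_1)$ to each $(u,u_i)$ to place an element of $O^-(u)$ in every branch, and concludes by pigeonhole (since $|O^-(u)|\le\epsilon<d$) that two of these coincide, contradicting the disjointness of the branches forced by $k$-geodecity. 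You prove the reverse inclusion $O(u)\subseteq O^-(u)$ by establishing the stronger structural fact that every $\Gamma_u$-orbit on each in-distance level $L_t^-(u)$ has size at least $d$, via an induction driven by the equivariant predecessor map; the pigeonhole then happens level by level. Both arguments are sound, and the two inclusions are interchangeable given $|O(u)|=|O^-(u)|$ (or the Duality Principle); the paper's version is a one-step argument, while yours costs an induction but isolates a reusable fact about stabiliser orbits in geodetic digraphs. One point worth making explicit in a final write-up: the bound $|O(u)|\le\epsilon$ and the equality $|N^-(u)|=d$ both rest on diregularity, which you correctly obtain from arc-transitivity (every vertex is the tail of an arc, so arc-transitivity gives vertex-transitivity and hence diregularity).
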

\begin{proof}
	Write $N^+(u) = \{ u_1,u_2,\dots ,u_d\} $.  Suppose that an element $v$ of $O^-(u)$ lies at distance $t \leq k$ from $u$.  We can assume that $v \in T(u_1)$.  By arc-transitivity, there are $d-1$ automorphisms $\phi _i$ of $G$, $i = 2,3,\dots ,d$, that map the arc $u \rightarrow u_1$ to the arc $u \rightarrow u_i$.  Each image $\phi _i(v)$ of $v$ under these automorphisms must also belong to $O^-(u)$, so each branch contains an element of $O^-(u)$.  As $\epsilon < d$, one of these elements must be repeated in the Moore tree rooted at $u$, violating $k$-geodecity.  Therefore we must have $O^-(u) \subseteq O(u)$ and the result follows.
\end{proof}

\begin{corollary}\label{AT divisibility}
	If $G$ is an arc-transitive $(d,k;+\epsilon )$-digraph with excess $\epsilon < d$, then $(k+1)$ divides $d(M(d,k)+\epsilon )$ and $(k+t)$ divides $(M(d,k)+\epsilon )(d^{t}-d^{t-1})$ for $2 \leq t \leq k$. 
\end{corollary}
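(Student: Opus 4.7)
The strategy is to adapt the cycle-counting arguments of Corollary \ref{divisibility corollary} and Theorem \ref{divisibility for VT} to the arc-transitive setting, using Lemma \ref{arc-transitive lemma} --- which asserts $O^-(u) = O(u)$ for every vertex $u$ --- in place of the vertex-transitive ingredient Lemma \ref{VT distance to o^-}. For the first divisibility I would first note that the outliers of $u$ lie at distance greater than $k$ from $u$, whereas the out-neighbours of $u$ are at distance one, so $N^+(u) \cap O(u) = \emptyset$; combining this with $O^-(u)=O(u)$ yields $N^+(u) \cap O^-(u) = \emptyset$. Thus for every out-neighbour $v$ of $u$ we have $u \notin O(v)$, so $d(v,u) \leq k$, and $k$-geodecity applied to the pair $(v,v)$ forbids non-trivial closed walks of length $\leq k$, forcing $d(v,u)=k$. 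Each arc $(u,v)$ therefore lies in a directed $(k+1)$-cycle, unique by Lemma \ref{no more than one k-cycle}, so the arcs of $G$ partition into disjoint $(k+1)$-cycles and $k+1$ divides $d(M(d,k)+\epsilon)$.

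For the second divisibility I would draw the Moore tree $T(u)$ rooted at $u$ and, as in the proof of Theorem \ref{divisibility for VT}, call a vertex $v$ at level $t$ \emph{short} if $d(v,u)\leq k-1$ and \emph{long} if $d(v,u)=k$. The key input here is that Lemma \ref{arc-transitive lemma} gives $T(u) \cap O^-(u) = \emptyset$, so every vertex of $T(u)$ is either short or long, with no third possibility --- a cleaner dichotomy than the one available in the vertex-transitive excess-one setting. A straightforward $k$-geodecity argument then shows that every vertex $v$ at level $t$ has exactly one short out-neighbour at level $t+1$ (the first step of the unique geodesic from $v$ back to $u$) and $d-1$ long ones: any other out-neighbour $v'$ with $d(v',u)<k$ would yield a second walk of length $\leq k$ from $v$ to $u$, contradicting $k$-geodecity. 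Starting from the $d$ long vertices at level $1$, induction produces $d^{t-1}$ short and $d^t-d^{t-1}$ long vertices at each level $2 \leq t \leq k$.

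Each long vertex $v$ at level $t$ gives a closed walk of length $k+t$ through $u$, obtained by concatenating the tree path $u \to v$ with the $k$-geodesic $v \to u$; a quick check using $k$-geodecity shows these closed walks are in fact simple cycles, since any repeated vertex would split the walk into a closed sub-walk of length $\geq k+1$ and a complementary closed walk at $u$ of length at most $t-1 \leq k-1$, which is forbidden. Double-counting pairs $(u,Z)$ over all vertices $u$ and all $(k+t)$-cycles $Z$ through $u$ then yields $(M(d,k)+\epsilon)(d^t-d^{t-1})=(k+t)Z_{k+t}$, which is the claimed divisibility. The main subtlety is pushing the argument all the way up to $t=k$; this succeeds here (but fails in Theorem \ref{divisibility for VT}) precisely because arc-transitivity with $\epsilon<d$, via Lemma \ref{arc-transitive lemma}, rules out any in-outlier of $u$ sitting at the deepest level of the Moore tree.
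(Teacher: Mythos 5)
Your proof is correct and takes essentially the same route as the paper: the paper's own proof simply invokes ``the same reasoning as in the proof of Theorem \ref{divisibility for VT}'', and your argument is exactly that reasoning with Lemma \ref{arc-transitive lemma} substituted for Lemma \ref{VT distance to o^-}, including a correct account of why the cleaner fact $T(u)\cap O^-(u)=\emptyset$ lets the cycle count extend to $t=k$. No gaps.
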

\begin{proof}
	Again we denote the number of distinct directed $r$-cycles in $G$ by $Z_r$. Using the same reasoning as in the proof of Theorem \ref{divisibility for VT}, we see that each vertex of $G$ is contained in $d$ directed $(k+1)$-cycles, so that $d(M(d,k)+\epsilon )=Z_{k+1}(k+1)$, and each vertex is contained in $d^t-d^{t-1}$ directed $(k+t)$-cycles for $2 \leq t \leq k$, so that $(M(d,k)+\epsilon )(d^t-d^{t-1}) = Z_{k+t}(k+t)$.  
\end{proof}
These results may suggest that the permutation digraphs are smallest possible arc-transitive $k$-geodetic digraphs \cite{TuiErs}.
\section{Diregular digraphs with excess one and degree three}\label{section:degree three}

Sillasen has shown that there are no $(2,k;+1)$-digraphs \cite{Sil}; therefore a reasonable next step is to ask whether there are any $(3,k;+1)$-digraphs. It was proven in \cite{BasMilSirSut} that there are no $(3,k;-1)$-digraphs; the strategy of the proof is to show that any two distinct vertices of a $(3,k;-1)$-digraph can have at most one common out-neighbour (and, conversely, at most one common in-neighbour), classify vertices $u$ according to the distance from $u$ to its repeat $r(u)$ and then count the different types of vertices in two different ways to arrive at a contradiction.  In this section we show that the first main result of \cite{BasMilSirSut}, that any two vertices have at most one common out-neighbour, continues to hold in the setting of digraphs with degree three and excess one. 

The fact that a $(d,k;+1)$-digraph $G$ must be diregular leads to a `duality principle'. This phenomenon has been observed for almost Moore digraphs; in \cite{BasMilSirSut} it is shown that taking the converse of a $(d,k;-1)$-digraph yields another $(d,k;-1)$-digraph. The proof for digraphs with excess one is trivial and we omit it. The Duality Principle will allow us to interchange out-neighbourhoods and in-neighbourhoods in our results. Recall that $o^-$ is the inverse function of the outlier function, i.e. $o^-(v) = u$ if and only if $v = o(u)$.

\begin{lemma}[Duality Principle]\label{Duality Principle}
	Let $G$ be a $(d,k;+1)$-digraph with outlier function $o$. Taking the converse of $G$ yields another $(d,k;+1)$-digraph $G^-$ with outlier function $o^-$.	
\end{lemma}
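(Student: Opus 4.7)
The plan is to verify in turn that the converse digraph $G^-$ inherits the required structural properties from $G$, and then to identify the outlier function by a direct distance argument.

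First I would note that $G$ and $G^-$ have the same vertex set, hence the same order $M(d,k)+1$, and that diregularity transfers under reversal: by Lemma \ref{lem:known results} $G$ is diregular of in- and out-degree $d$, so $G^-$ is also diregular of in- and out-degree $d$. The key structural property to transfer is $k$-geodecity. This is immediate from the bijection between directed walks in $G$ and directed walks in $G^-$ obtained by reversing the arc sequence: if $W_1, W_2$ were two distinct directed walks in $G^-$ from $u$ to $v$ of length $\leq k$, then their reversals would be two distinct directed walks of length $\leq k$ from $v$ to $u$ in $G$, contradicting $k$-geodecity of $G$.

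Since $G^-$ is a $k$-geodetic digraph of minimum out-degree $d$ and order $M(d,k)+1$, it is a $(d,k;+1)$-digraph, so for every vertex $u$ there exists a unique outlier $o'(u)$ in $G^-$, characterised by $d_{G^-}(u, o'(u)) \geq k+1$. The remaining task is to show $o'(u) = o^-(u)$. Using the identity $d_{G^-}(u,v) = d_G(v,u)$, which follows directly from the arc-reversal bijection between $u \to v$ walks in $G^-$ and $v \to u$ walks in $G$, the defining condition becomes $d_G(o'(u), u) \geq k+1$. By definition of the outlier function of $G$, this says precisely that $u = o(o'(u))$, i.e. $o'(u) = o^-(u)$.

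There is no real obstacle here; the statement is essentially a tautology once one observes that reversing arcs interchanges out-distance and in-distance while preserving walk counts. The only thing worth being careful about is that we verify $G^-$ genuinely is a $(d,k;+1)$-digraph (not just $k$-geodetic of the right order) before invoking the existence and uniqueness of the outlier function, which is why I would handle diregularity, $k$-geodecity, and order before identifying $o^-$.
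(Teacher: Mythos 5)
Your proof is correct and is exactly the verification the paper has in mind — the paper explicitly omits the proof as trivial, and your write-up (order and diregularity transfer, $k$-geodecity via the walk-reversal bijection, then identification of the outlier through $d_{G^-}(u,v)=d_G(v,u)$) supplies the standard details without any gaps.
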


We begin with a lemma that holds generally for $(d,k;+1)$-digraphs; it describes the situation of vertices with identical out-neighbourhoods.

\begin{lemma}\label{identical neighbourhoods excess one}
	Let $u$ and $v$ be vertices of a $(d,k;+1)$-digraph such that $N^+(u)=N^+(v)$, where $u \not = v$.  Then $v = o(u)$ and $u = o(v)$, i.e. the outlier function $o$ transposes $u$ and $v$. The same result holds if $N^-(u) = N^-(v)$.
\end{lemma}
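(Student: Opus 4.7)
The plan is to pin down the distance $d(u,v)$ precisely, forcing $v$ to lie outside the depth-$k$ Moore tree rooted at $u$ so that $v = o(u)$.

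First, I would show that $d(u,v) \geq k$ by a direct application of $k$-geodecity. Suppose $d(u,v) = t$ with $1 \leq t \leq k-1$, and fix any out-neighbour $w \in N^+(u) = N^+(v)$. Concatenating a $t$-path from $u$ to $v$ with the arc $v \to w$ yields a walk from $u$ to $w$ of length $t+1 \leq k$, while the arc $u \to w$ provides another walk of length $1$. These two walks are distinct, contradicting $k$-geodecity.

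Second, I would rule out $d(u,v) = k$. Given a $k$-path $u = u_0 \to u_1 \to \cdots \to u_{k-1} \to u_k = v$, the membership $u_1 \in N^+(u) = N^+(v)$ supplies an arc $v \to u_1$, and concatenating it with the tail $u_1 \to u_2 \to \cdots \to u_{k-1} \to v$ produces a closed walk $v \to u_1 \to \cdots \to u_{k-1} \to v$ of length $k$. The intermediate vertices $u_1,\dots,u_{k-1},v$ occupy different levels of the Moore tree of $u$ and are therefore distinct, so this is a genuine directed $k$-cycle. However, a $k$-geodetic digraph contains no cycle of length $\leq k$ (such a cycle together with the trivial walk would yield two distinct walks of length $\leq k$ from a vertex to itself), and we have our contradiction.

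Combining the two steps forces $d(u,v) \geq k+1$, i.e.\ $v = o(u)$. Since the hypothesis $N^+(u) = N^+(v)$ is symmetric in $u$ and $v$, the same argument with the roles swapped gives $u = o(v)$, and hence $o$ transposes $u$ and $v$. For the case $N^-(u) = N^-(v)$, I would invoke the Duality Principle (Lemma~\ref{Duality Principle}): passing to the converse digraph $G^-$ converts the in-neighbourhood equality into an out-neighbourhood equality in a $(d,k;+1)$-digraph with outlier function $o^-$, so the first part of the lemma applies in $G^-$ and gives that $o^-$ transposes $u$ and $v$; this is plainly equivalent to $o$ transposing them in $G$. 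The main obstacle is the borderline case $d(u,v) = k$, where the walk-through-$v$ trick from the first step only produces a walk of length $k+1$ and so fails to violate $k$-geodecity directly; bridging this gap requires folding the construction into an honest $k$-cycle using the equality of out-neighbourhoods and then appealing to the absence of short cycles in a $k$-geodetic digraph.
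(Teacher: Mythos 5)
Your proof is correct, but it takes a genuinely different route from the paper's. The paper's argument is a single global observation: since $N^+(u)=N^+(v)$, the depth-$k$ Moore trees rooted at $u$ and at $v$ coincide at every level except Level~0, and $k$-geodecity forces $u$ to appear only at the root of its own tree; hence $u$ occurs nowhere in the tree rooted at $v$, so $d(v,u)\geq k+1$, $o(v)=u$, and $o(u)=v$ by symmetry of the hypothesis. You instead bound $d(u,v)$ directly by a two-case analysis: for $1\leq d(u,v)\leq k-1$ you exhibit two distinct $\leq k$-walks from $u$ to a common out-neighbour $w$ (lengths $1$ and $d(u,v)+1$), and for $d(u,v)=k$ you fold the geodesic into a closed $k$-walk at $v$ using the arc $v\rightarrow u_1$. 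Both steps are sound; in the second, your appeal to the absence of cycles of length $\leq k$ is legitimate under the paper's definition of $k$-geodecity (which explicitly allows the two endpoints to coincide, so the trivial walk of length $0$ already gives a second $\leq k$-walk from $v$ to itself), and you do not in fact need to verify that the closed walk is a genuine cycle. Your handling of the in-neighbourhood case via Lemma~\ref{Duality Principle} matches the paper's. The trade-off: the paper's tree-coincidence argument is shorter and treats all distances at once, while yours is more explicit about exactly which walks collide, at the cost of having to treat the borderline distance $d(u,v)=k$ separately.
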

\begin{proof}
	Suppose that $N^+(u)=N^+(v)$, but $u \not = v$. Draw the Moore tree of depth $k$ rooted at $u$; by $k$-geodecity, $u$ appears only at Level 0, the root position, of this tree. As the Moore tree rooted at $v$ differs from the Moore tree rooted at $u$ only at Level 0, and $u \not = v$, it follows that $v$ cannot reach $u$ by a path of length $\leq k$, so $o(v) = u$ and, by symmetry, $o(u) = v$.  The result for in-neighbourhoods follows by the Duality Principle.
\end{proof}

For the remainder of this section let $G$ be a diregular $(3,k;+1)$-digraph with outlier function $o$. Our first goal is to show that no pair of distinct vertices can have identical out-neighbourhoods; to achieve this we need a lemma for pairs of vertices with exactly two common out-neighbours.

\begin{lemma}\label{two common out-neighbours}
	Let $u,v$ be distinct vertices of $G$ with exactly two common out-neighbours, i.e. $|N^+(u) \cap N^+(v)| = 2$. If we write $N^+(u) = \{ u_1,u_2,u_3\} $ and $N^+(v) = \{ v_1,v_2,v_3\} $, where $u_1 = v_1$, $u_2 = v_2$ and $u_3 \not = v_3$, then $o(u) = v_3$ and $o(v) = u_3$.  
\end{lemma}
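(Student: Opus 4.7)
My plan is to conduct a case analysis on the positions of $v$ in $u$'s Moore tree and vice versa, leveraging a set equation derived from the shared out-neighbours. Write $T(x) = \{z : d(x,z) \leq k-1\}$, which is exactly the set of vertices filling the depth-$(k-1)$ sub-Moore-tree rooted at an out-neighbour $x$. The Moore tree of depth $k$ at any vertex $y$ contains every vertex of $G$ exactly once except $o(y)$, which is absent; since $u_1 = v_1$ and $u_2 = v_2$, the subtrees under these shared out-neighbours contribute the same sets in both Moore trees, so taking complements gives the partition
\[
\{u, o(u)\} \sqcup T(u_3) = \{v, o(v)\} \sqcup T(v_3).
\]
I then pin down where $v$ lies in $u$'s tree: if $v$ sat at some Level $t$ with $1 \leq t \leq k-1$, its out-neighbours $u_1, u_2$ would appear at Level $t+1 \geq 2$, duplicating their presence at Level $1$ and violating $k$-geodecity. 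Hence $v = o(u)$ or $d(u,v) = k$; symmetrically $u = o(v)$ or $d(v,u) = k$, producing four combined cases.

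The main case is $d(u,v) = d(v,u) = k$. Here $v$ sits at Level $k$ of $u$'s tree, and it cannot lie in $T(u_i)$ for $i = 1, 2$ since the arc $v \to u_i$ would combine with the path $u_i \to \cdots \to v$ of length $\leq k-1$ to give a non-trivial closed walk at $u_i$ of length $\leq k$, contradicting $k$-geodecity. Thus $v \in T(u_3)$ with $d(u_3,v) = k-1$. The set equation places $v_3 \in \{u, o(u)\} \cup T(u_3)$, and $v_3 \neq u$ (else $d(v,u) = 1$), giving $v_3 = o(u)$ (as desired) or $v_3 \in T(u_3)$. In the latter situation the unique shortest walk $u_3 \to \cdots \to v_3$ has length $\leq k-1$, while extending the length-$(k-1)$ walk $u_3 \to \cdots \to v$ by the arc $v \to v_3$ yields a distinct walk $u_3 \to \cdots \to v \to v_3$ of length exactly $k$; the two walks both have length $\leq k$, contradicting $k$-geodecity. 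Hence $v_3 = o(u)$, and swapping the roles of $u$ and $v$ gives $u_3 = o(v)$.

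The three remaining cases all lead to contradictions via variants of the same two-walks mechanism together with the absence of fixed points of $o$ (since $o(x) = x$ would force $d(x,x) \geq k+1$, contradicting the trivial walk). In Case 1 ($v = o(u)$, $u = o(v)$) the set equation collapses to $T(u_3) = T(v_3)$, and tracing $o$ through the out-arcs at $u$ and $v$ gives $\{o(u_1),o(u_2),o(u_3)\} = \{u_1,u_2,v_3\}$ and $\{o(u_1),o(u_2),o(v_3)\} = \{u_1,u_2,u_3\}$; after eliminating fixed points this forces $o(u_3) = v_3$, contradicting $v_3 \in T(v_3) = T(u_3)$ via the outlier distance bound $d(u_3, v_3) \geq k+1$. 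In Case 2 ($v = o(u)$, $d(v,u) = k$) the symmetric walk argument applied in $v$'s tree first yields $o(v) = u_3$; then $o(u_3) \in N^+(v) = \{u_1, u_2, v_3\}$, and each value fails: $o(u_3) = v_3$ contradicts $v_3 \in T(u_3)$ (which follows from the set equation), while each $o(u_3) = u_j$ forces, after pinning down $o(u_1), o(u_2)$ by the no-fixed-point condition, the arc $u_3 \to u_{3-j}$ inside $N^+(u_3) = N^+(o(v))$, producing two walks $u \to u_{3-j}$ of lengths $1$ and $2$ and again violating $k$-geodecity. Case 3 is symmetric to Case 2. The chief technical obstacle is the bookkeeping inside Cases 1 and 2, where several subcases each have to be reduced to one of these short-walk contradictions.
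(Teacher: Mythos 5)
Your proof is correct, but it takes a genuinely different route from the paper's. The paper works entirely inside the Moore tree rooted at $v$: it notes that $u_3$ must be $v$, lie in $T(v_3)\setminus\{v_3\}$, or equal $o(v)$, and kills the middle case by a pigeonhole argument — choosing a vertex $w$ at depth $k-1-\ell$ below $u_3$ (hence at Level $k$ of $v$'s tree), observing that none of $w$'s three out-neighbours can fall back into $T(v_3)$ and at most two can be $v$ or $o(v)$, so one lands in $T(u_1)\cup T(u_2)$ and is then reached from $u$ by two distinct $\leq k$-paths. You never ask where $u_3$ sits in $v$'s tree; instead you classify the mutual positions of $u$ and $v$ (each is the other's outlier or at distance exactly $k$), exploit the complement identity $\{u,o(u)\}\sqcup T(u_3)=\{v,o(v)\}\sqcup T(v_3)$, and close every case either by extending the length-$(k-1)$ path from $u_3$ to $v$ (or from $v_3$ to $u$) by a single arc, or by fixed-point-free bookkeeping with $o(N^+(x))=N^+(o(x))$ in the degenerate cases. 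Both arguments are sound and both implicitly use $d=3$ (the paper in the ``at most two out-neighbours lie in $\{v,o(v)\}$'' step, you in the shared-branch set equation). The paper's version is shorter, with a single three-way split; yours trades that for four cases but replaces the deep-vertex pigeonhole with a purely local walk-extension and makes the location of $v_3$ completely explicit, which arguably generalises more transparently to $d-1$ shared out-neighbours in degree $d$.
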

\begin{proof}
	Let $u$ and $v$ be as described. This configuration is shown in Figure \ref{fig:two common ONs}. By $k$-geodecity $u_3 \not \in T(u_1) \cup T(u_2)$. Hence there are three possible positions for the vertex $u_3$ in the Moore tree rooted at $v$: i) $u_3 = v$, ii) $u_3 \in T(v_3) - \{ v_3\} $ or iii) $u_3 = o(v)$. If $u_3 = v$, then we have paths $u \rightarrow u_2$ and $u \rightarrow u_3 \rightarrow u_2$, which is impossible for $k \geq 2$.
	
	Suppose that $u_3 \in T(v_3)-\{ v_3\} $.  Put $\ell = d(v_3,u_3)$, so that $1 \leq \ell \leq k-1$.  Let $w$ be a vertex in $N^{k-1-\ell }(u_3)$; then $w \in N^{k-1}(v_3)$.  The vertex $w$ has three out-neighbours $w_1$, $w_2$ and $w_3$.  By $k$-geodecity none of these out-neighbours can lie in $T(v_3)$.  At most two of the out-neighbours can lie in $\{ v,o(v)\} $, so it follows that $w$ has an out-neighbour, say $w_1$, that lies in $T(v_1) \cup T(v_2) = T(u_1) \cup T(u_2)$; without loss of generality $w_1 \in T(u_1)$. Hence there is a path of length $\leq k$ from $u$ to $w_1$ via $u_1$. There is also a path from $u$ to $w_1$ with length $\leq k$ formed from the arc $u \rightarrow u_3$, followed by the path from $u_3$ to $w$ with length $k-1-\ell $ and the arc $w \rightarrow w_1$. This violates $k$-geodecity. It follows that option iii) must hold, i.e. $u_3 = o(v)$. Similarly $v_3 = o(u)$.  
\end{proof}

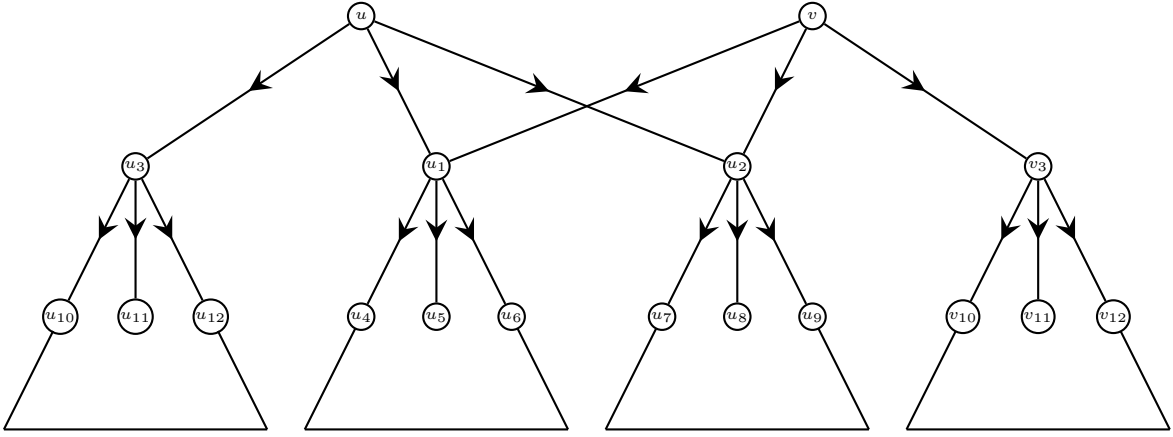
\begin{figure}\centering
	\begin{tikzpicture}[midarrow=stealth,x=0.2mm,y=-0.2mm,inner sep=0.1mm,scale=5,
		thick,vertex/.style={circle,draw,minimum size=10,font=\tiny,fill=white},edge label/.style={fill=white}]
		\tiny

		\node at (-50,0) [vertex] (v1) {$u$};
		\node at (10,0) [vertex] (w1) {$v$};
		
		\node at (-80,20) [vertex] (w2) {$u_3$};
		\node at (-40,20) [vertex] (v2) {$u_1$};
		\node at (0,20) [vertex] (v3) {$u_2$};
		\node at (40,20) [vertex] (v4) {$v_3$};
		
		\node at (-50,40) [vertex] (v5) {$u_4$};
		\node at (-40,40) [vertex] (v11) {$u_5$};
		\node at (-30,40) [vertex] (v6) {$u_6$};
		\node at (-10,40) [vertex] (v7) {$u_7$};
		\node at (0,40) [vertex] (v12) {$u_8$};
		\node at (10,40) [vertex] (v8) {$u_9$};
		\node at (30,40) [vertex] (v9) {$v_{10}$};
		\node at (40,40) [vertex] (v13) {$v_{11}$};
		\node at (50,40) [vertex] (v10) {$v_{12}$};
		\node at (-90,40) [vertex] (w14) {$u_{10}$};
		\node at (-80,40) [vertex] (w15) {$u_{11}$};
		\node at (-70,40) [vertex] (w16) {$u_{12}$};
		
		\path
		
		(w2) edge[middlearrow] (w14)
		(w2) edge[middlearrow] (w15)
		(w2) edge[middlearrow] (w16)
		(v1) edge[middlearrow] (v2)
		(v1) edge[middlearrow] (v3)
		(v1) edge[middlearrow] (w2)

		(w1) edge[middlearrow] (v2)
		(w1) edge[middlearrow] (v3)
		(w1) edge[middlearrow] (v4)	
		
		(v2) edge[middlearrow] (v5)
		(v2) edge[middlearrow] (v6)
		(v2) edge[middlearrow] (v11)
		(v3) edge[middlearrow] (v7)
		(v3) edge[middlearrow] (v8)
		(v3) edge[middlearrow] (v12)
		(v4) edge[middlearrow] (v9)
		(v4) edge[middlearrow] (v10)	
		(v4) edge[middlearrow] (v13);
		
		\draw (-50.75,41.5) -- (-57.5,55);
		\draw (-29.25,41.5) -- (-22.5,55);	
		\draw (-57.5,55)--(-22.5,55);
		
		\draw (-10.75,41.5) -- (-17.5,55);
		\draw (10.75,41.5) -- (17.5,55);	
		\draw (-17.5,55)--(17.5,55);
		
		\draw (29.05,41.9) -- (22.5,55);
		\draw (50.95,41.9) -- (57.5,55);	
		\draw (57.5,55)--(22.5,55);	
		
		\draw (-90.95,41.9) -- (-97.5,55);
		\draw (-69.05,41.9) -- (-62.5,55);	
		\draw (-97.5,55)--(-62.5,55);	
		
	\end{tikzpicture} 
	\caption{Vertices with two common out-neighbours}
	\label{fig:two common ONs}
\end{figure}

\begin{corollary}\label{at most 2 common neighbours}
	No pair $u,v$ of distinct vertices of $G$ have identical out-neighbourhoods.
\end{corollary}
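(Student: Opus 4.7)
The plan is the following reductio argument. Suppose that $N^+(u)=N^+(v)=\{x_1,x_2,x_3\}$ with $u\neq v$. Lemma \ref{identical neighbourhoods excess one} immediately gives $o(u)=v$ and $o(v)=u$. Since Lemma \ref{lem:known results} tells us $o$ is an automorphism of $G$, it maps $N^+(u)$ bijectively onto $N^+(v)$, so $o$ restricts to a permutation of $\{x_1,x_2,x_3\}$. No $x_i$ can be fixed by $o$, because $o(x_i)=x_i$ would require $d(x_i,x_i)\geq k+1$, which is false; thus $o\big|_{\{x_1,x_2,x_3\}}$ is a derangement on three elements, that is, a $3$-cycle.

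I would then record two structural facts. First, $\{x_1,x_2,x_3\}$ is an independent set: any arc $x_i\to x_j$ would yield two walks $u\to x_j$ and $u\to x_i\to x_j$ of length at most $2\leq k$, violating $k$-geodecity. Second, each $N^-(x_i)$ contains both $u$ and $v$, so by diregularity $N^-(x_i)=\{u,v,a_i\}$ for a uniquely determined third in-neighbour $a_i$; moreover $a_1,a_2,a_3$ are pairwise distinct. Indeed, $a_i=a_j$ would give $N^-(x_i)=N^-(x_j)$, and the in-neighbour form of Lemma \ref{identical neighbourhoods excess one} would then force $o$ to transpose $x_i$ and $x_j$, which is incompatible with $o\big|_{\{x_1,x_2,x_3\}}$ being a $3$-cycle.

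Next I would invoke the Duality Principle (Lemma \ref{Duality Principle}) to apply Lemma \ref{two common out-neighbours} inside the converse digraph $G^-$, whose outlier function is $o^{-1}$. Viewed in $G^-$, the vertices $x_1$ and $x_2$ have out-neighbourhoods $\{u,v,a_1\}$ and $\{u,v,a_2\}$ with $a_1\neq a_2$, so they share exactly the two out-neighbours $u,v$. Lemma \ref{two common out-neighbours} applied in $G^-$ therefore yields $o^{-1}(x_1)=a_2$ and $o^{-1}(x_2)=a_1$; equivalently, $o(a_1)=x_2$.

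The contradiction now follows at once. Since $a_1\in N^-(x_1)\setminus\{u,v\}$ and $\{x_1,x_2,x_3\}$ contains no arcs, $a_1\notin\{x_1,x_2,x_3\}$. But $o$ is a bijection of $V(G)$ permuting $\{x_1,x_2,x_3\}$, so it also permutes the complement $V(G)\setminus\{x_1,x_2,x_3\}$, giving $o(a_1)\notin\{x_1,x_2,x_3\}$ and clashing with $o(a_1)=x_2$. The only subtle step is the distinctness of the $a_i$, which is precisely where the $3$-cycle structure of $o\big|_{\{x_1,x_2,x_3\}}$ is exploited; the rest of the argument is a mechanical combination of Lemma \ref{identical neighbourhoods excess one}, Lemma \ref{two common out-neighbours} and the Duality Principle.
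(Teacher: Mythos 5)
Your proof is correct: every step checks out, and it rests on the same three ingredients as the paper's own argument --- Lemma \ref{identical neighbourhoods excess one}, the Duality Principle (Lemma \ref{Duality Principle}), and Lemma \ref{two common out-neighbours} applied in the converse digraph to a pair of the common out-neighbours. The one genuine difference is organizational: the paper does not observe that $o$ restricts to a $3$-cycle on $\{x_1,x_2,x_3\}$, and instead handles the possible coincidences among the third in-neighbours $a_1,a_2,a_3$ by a case split (all equal, all distinct, exactly two equal), extracting a contradiction in each case from $o^-$ failing to be well-defined or injective. Your $3$-cycle observation lets you prove the $a_i$ pairwise distinct up front (a transposition in $o$ on two of the $x_i$ would contradict the $3$-cycle), after which a single application of Lemma \ref{two common out-neighbours} gives $o(a_1)=x_2$, contradicting the fact that $o$ preserves the set $\{x_1,x_2,x_3\}$ and its complement. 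This buys a cleaner, case-free finish at the cost of two extra preliminary observations (independence of $\{x_1,x_2,x_3\}$ and the derangement property of $o$ on that set); both versions are of essentially equal length and strength.
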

\begin{proof}
	Suppose that $u \not = v$ but $N^+(u) = N^+(v) = \{ u_1,u_2,u_3\} $. The setup is shown in Figure \ref{fig:three common ONs}.  By Lemma \ref{identical neighbourhoods excess one} we know that $v = o(u)$ and $u = o(v)$.  For $i = 1,2,3$ denote the in-neighbour of $u_i$ that does not lie in $\{ u,v\} $ by $u_i^*$. We cannot have $u_1^*=u_2^*=u_3^*$, for otherwise by Lemma \ref{identical neighbourhoods excess one} we would have $o(u) = u_1^* = v$.
	
	By the Duality Principle taking the converse $G^-$ of $G$ yields a diregular $(3,k;+1)$-digraph with outlier function $o' = o^-$. In $G^-$ we have $N^+(u_i) = \{ u_i^*,u,v\} $ for $i = 1,2,3$. 
	
	Suppose that $u_i^* \not = u_j^*$. Then in $G^-$ the pair of vertices $u_i,u_j$ has exactly two common out-neighbours, so that by Lemma \ref{two common out-neighbours} we obtain $o^-(u_i) = u_j^*$ and $o^-(u_j) = u_i^*$. If $u_1^*,u_2^*,u_3^*$ are all distinct, we would then obtain $o^-(u_1) = u_2^*=u_3^*$, a contradiction. 
	
	We can thus assume that $u_1^* = u_2^* \not = u_3^*$. Applying Lemma \ref{two common out-neighbours} to the pairs $u_1,u_3$ and $u_2,u_3$ in turn, we deduce that $o^-(u_1) = o^-(u_2) = u_3^*$, again a contradiction, as $o$ is a permutation.   
\end{proof}

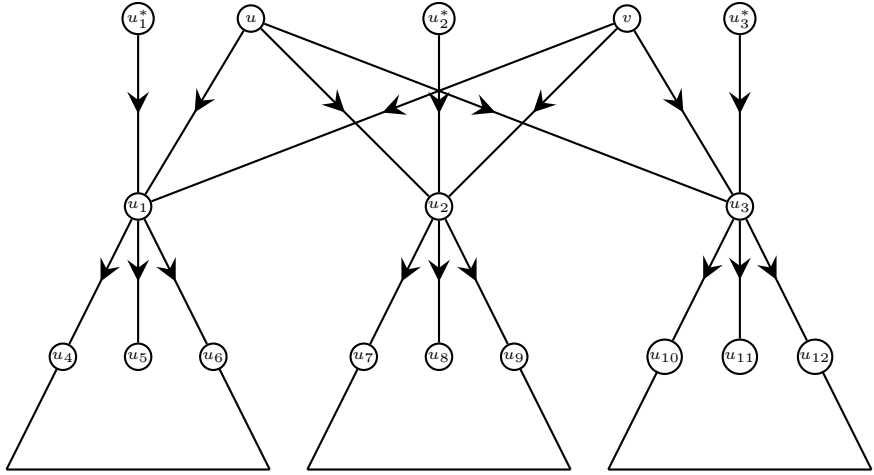
\begin{figure}\centering
	\begin{tikzpicture}[midarrow=stealth,x=0.2mm,y=-0.2mm,inner sep=0.1mm,scale=5,
		thick,vertex/.style={circle,draw,minimum size=10,font=\tiny,fill=white},edge label/.style={fill=white}]
		\tiny

		\node at (-25,-5) [vertex] (v1) {$u$};
		\node at (25,-5) [vertex] (w1) {$v$};
		
		\node at (-40,-5) [vertex] (u1*) {$u_1^*$};
		\node at (0,-5) [vertex] (u2*) {$u_2^*$};
		\node at (40,-5) [vertex] (u3*) {$u_3^*$};
		
		\node at (-40,20) [vertex] (v2) {$u_1$};
		\node at (0,20) [vertex] (v3) {$u_2$};
		\node at (40,20) [vertex] (v4) {$u_3$};
		
		\node at (-50,40) [vertex] (v5) {$u_4$};
		\node at (-40,40) [vertex] (v11) {$u_5$};
		\node at (-30,40) [vertex] (v6) {$u_6$};
		\node at (-10,40) [vertex] (v7) {$u_7$};
		\node at (0,40) [vertex] (v12) {$u_8$};
		\node at (10,40) [vertex] (v8) {$u_9$};
		\node at (30,40) [vertex] (v9) {$u_{10}$};
		\node at (40,40) [vertex] (v13) {$u_{11}$};
		\node at (50,40) [vertex] (v10) {$u_{12}$};
		
		\path
		(u1*) edge[middlearrow] (v2)
		(u2*) edge[middlearrow] (v3)
		(u3*) edge[middlearrow] (v4)
		
		(v1) edge[middlearrow] (v2)
		(v1) edge[middlearrow] (v3)
		(v1) edge[middlearrow] (v4)

		(w1) edge[middlearrow] (v2)
		(w1) edge[middlearrow] (v3)
		(w1) edge[middlearrow] (v4)	
		
		(v2) edge[middlearrow] (v5)
		(v2) edge[middlearrow] (v6)
		(v2) edge[middlearrow] (v11)
		(v3) edge[middlearrow] (v7)
		(v3) edge[middlearrow] (v8)
		(v3) edge[middlearrow] (v12)
		(v4) edge[middlearrow] (v9)
		(v4) edge[middlearrow] (v10)	
		(v4) edge[middlearrow] (v13);
		
		\draw (-50.75,41.5) -- (-57.5,55);
		\draw (-29.25,41.5) -- (-22.5,55);	
		\draw (-57.5,55)--(-22.5,55);
		
		\draw (-10.75,41.5) -- (-17.5,55);
		\draw (10.75,41.5) -- (17.5,55);	
		\draw (-17.5,55)--(17.5,55);
		
		\draw (29.05,41.9) -- (22.5,55);
		\draw (50.95,41.9) -- (57.5,55);	
		\draw (57.5,55)--(22.5,55);	
		
	\end{tikzpicture} 
	\caption{Configuration for Corollary \ref{at most 2 common neighbours}}
	\label{fig:three common ONs}
\end{figure}

Having ruled out identical out-neighbourhoods, we can complete the proof of our desired result.
\begin{theorem}\label{one common ON theorem}
	Any two distinct vertices of a $(3,k;+1)$-digraph $G$ have at most one common out-neighbour and at most one common in-neighbour.
\end{theorem}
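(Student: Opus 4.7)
The plan is to argue by contradiction. Suppose distinct vertices $u,v$ share at least two common out-neighbours; by Corollary~\ref{at most 2 common neighbours} they cannot share all three, so they share exactly two, and I set up notation as in Lemma~\ref{two common out-neighbours}, writing $N^+(u)=\{u_1,u_2,u_3\}$ and $N^+(v)=\{u_1,u_2,v_3\}$ with $u_3\neq v_3$. The lemma immediately pins down the outlier function on the pair: $o(u)=v_3$ and $o(v)=u_3$.

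Next I would turn to the in-neighbour structure of $u_1$ and $u_2$: both have $u$ and $v$ as in-neighbours, so I write the third in-neighbour of $u_i$ as $u_i^*$ for $i=1,2$. Passing to the converse digraph $G^-$ via the Duality Principle (Lemma~\ref{Duality Principle}), the pair $(u_1,u_2)$ now has $\{u,v\}$ as two common out-neighbours inside a $(3,k;+1)$-digraph. If $u_1^*=u_2^*$ then Corollary~\ref{at most 2 common neighbours} applied to $G^-$ gives an immediate contradiction, so $u_1^*\neq u_2^*$. Applying Lemma~\ref{two common out-neighbours} inside $G^-$, whose outlier function is $o^-$, then forces $o^-(u_1)=u_2^*$, equivalently $o(u_1^*)=u_2$.

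The crux is to exploit that $o$ is an automorphism of $G$ (Lemma~\ref{lem:known results}). Since $N^-(u_1)=\{u,v,u_1^*\}$, applying $o$ yields
\[ N^-(o(u_1)) = \{o(u),o(v),o(u_1^*)\} = \{v_3,u_3,u_2\}, \]
so in particular both $u_2$ and $u_3$ are in-neighbours of $o(u_1)$. Thus $o(u_1)$ is a common out-neighbour of $u_2$ and $u_3$; but $u_2,u_3$ are distinct out-neighbours of $u$, so this produces two distinct walks $u\to u_2\to o(u_1)$ and $u\to u_3\to o(u_1)$ of length two, contradicting $k$-geodecity (recall $k\geq 2$). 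Hence $u,v$ share at most one common out-neighbour, and the result for in-neighbours follows by applying exactly the same argument inside $G^-$.

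The main obstacle I anticipate is the middle step: recognising that the partnered pair $(u,v)$ in $G$ propagates via duality to a partnered pair $(u_1,u_2)$ in $G^-$, which is what supplies the identity $o(u_1^*)=u_2$ needed to place $u_2$ inside $N^-(o(u_1))$. Once this identity is secured, the final contradiction via two length-two walks from $u$ drops out automatically.
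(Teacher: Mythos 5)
Your proof is correct, and it is a genuinely different — and considerably shorter — route than the paper's. The paper, after invoking Lemma \ref{two common out-neighbours} to get $o(u)=v_3$ and $o(v)=u_3$, proceeds \emph{downward}: it analyses $T(u_3)\cap T(v_3)$, shows the two branches must nearly coincide, forces $k=2$, and then finishes with a case analysis on the out-neighbourhoods at Level 2 using the outlier automorphism. You instead go \emph{upward} to the in-neighbourhoods of the shared out-neighbours: the pair $(u_1,u_2)$ has exactly two common out-neighbours in $G^-$ (identical in-neighbourhoods being excluded by the dual of Corollary \ref{at most 2 common neighbours}), so Lemma \ref{two common out-neighbours} applied in $G^-$ gives $o^-(u_2)=u_1^*$, i.e.\ $o(u_1^*)=u_2$; then applying the automorphism $o$ to $N^-(u_1)=\{u,v,u_1^*\}$ yields $N^-(o(u_1))=\{v_3,u_3,u_2\}$, so $u\rightarrow u_2\rightarrow o(u_1)$ and $u\rightarrow u_3\rightarrow o(u_1)$ are two distinct $2$-walks, contradicting $k$-geodecity uniformly for all $k\geq 2$ with no reduction to $k=2$ needed. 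One small slip: $o^-(u_1)=u_2^*$ is equivalent to $o(u_2^*)=u_1$, not to $o(u_1^*)=u_2$; the identity you actually need, $o(u_1^*)=u_2$, comes from the \emph{other} half of the lemma's conclusion, $o^-(u_2)=u_1^*$, so it is still available and the argument stands. All ingredients you use (Lemma \ref{two common out-neighbours}, Corollary \ref{at most 2 common neighbours}, the Duality Principle, and the fact that $o$ is an automorphism) are established before this theorem, so there is no circularity; your argument could replace the paper's proof and would shorten it substantially.
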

\begin{proof}
	Suppose that $u,v$ are distinct vertices with more than one out-neighbour in common. By Corollary \ref{at most 2 common neighbours}, $u$ and $v$ must have exactly two common out-neighbours. Write $N^+(u) = \{ u_1,u_2,u_3\} $ and $N^+(v) = \{ v_1,v_2,v_3\} $, where $u_1 = v_1, u_2 = v_2$, but $u_3 \not = v_3$ (see Figure \ref{fig:two common ONs}).  By Lemma \ref{two common out-neighbours} we know that $o(u) = v_3$ and $o(v) = u_3$.

	Let $w \in T(u_3) \cap T(v_3)$, with $d(u_3,w) = s$ and $d(v_3,w) = t$.  Suppose that $s > t$.  Consider the set $N^{k-s}(w)$.  By construction, $N^{k-s}(w) \subseteq N^k(u_3)$, so $N^{k-s}(w) \cap T(u_3) = \emptyset $.  We have $k+t-s \leq k-1$, so $N^{k-s}(w) \subseteq T(v_3)$. Hence by $k$-geodecity $N^{k-s}(w) \cap (T(u_1) \cup T(u_2)) = \emptyset $.  As no vertex of $N^{k-s}(w)$ can lie in any of the branches of the Moore tree rooted at $u$, we must have $N^{k-s}(w) \subseteq \{ u,o(u)\} $. Thus the size of the set $N^{k-s}(w)$ satisfies $|N^{k-s}(w)| = 3^{k-s} \leq 2$, which is impossible for $s \leq k-1$. Therefore $d(u_3,w) = d(v_3,w)$ for every $w \in T(u_3)\cap T(v_3)$.
	
	Consider $N^+(u_3)$ and $N^+(v_3)$. By $k$-geodecity $N^+(u_3) \cap (T(u_1) \cup T(u_2)) = \emptyset $. Also $v_3 \not \in N^+(u_3)$, as $v_3 = o(u)$, and $o(v) = u_3 \not \in N^+(u_3)$. Thus $N^+(u_3) \subset \{ v\} \cup N^+(v_3)$ and similarly $N^+(v_3) \subset \{ u\} \cup N^+(u_3)$. By Corollary \ref{at most 2 common neighbours} we cannot have $N^+(u_3) = N^+(v_3)$, so we can assume that $u_{10} = v_{10}$, $u_{11}=v_{11}$, $u_{12} = v$ and $v_{12} = u$. If $k \geq 3$ then $u$ will have distinct $\leq k$-paths to $u_1$ (and $u_2$), namely $u \rightarrow u_1$ and $u \rightarrow u_3 \rightarrow v \rightarrow u_1$, so $k = 2$. The resulting configuration is displayed in Figure \ref{fig:two common ONsv2}.
	
	Observe that now $u_3$ and $v_3$ have two out-neighbours in common, namely $u_{10}$ and $u_{11}$, so by Lemma \ref{two common out-neighbours} we have $o(u_3) = u$ and $o(v_3) = v$. Applying the outlier automorphism to the arcs incident with $u$, we deduce that $o(u) = v_3$ has arcs to $o(u_3) = u$ and $o(u_1)$ and $o(u_2)$, so $\{ o(u_1),o(u_2)\} = \{ u_{10},u_{11}\} $. By $2$-geodecity $u_{10}$ can have arcs only to $N^+(u_1)$ and $N^+(u_2)$. As $u_{10}$ has three out-going arcs and cannot have the same out-neighbourhood as $u_1$ or $u_2$ by Corollary \ref{at most 2 common neighbours}, it follows that $u_{10}$ must have two common out-neighbours with either $u_1$ or $u_2$; without loss of generality $N^+(u_{10}) = \{ u_4,u_5,u_7\} $. Applying Lemma \ref{two common out-neighbours} to the pair $u_1,u_{10}$ we see that $o(u_1) = u_7$. As we have already determined that $o(u_1)\in \{ u_{10},u_{11}\} $, this is a contradiction. 
	The last part of the theorem follows by the Duality Principle.	
\end{proof}
\begin{figure}\centering
	\begin{tikzpicture}[midarrow=stealth,x=0.2mm,y=-0.2mm,inner sep=0.1mm,scale=5,
		thick,vertex/.style={circle,draw,minimum size=10,font=\tiny,fill=white},edge label/.style={fill=white}]
		\tiny

		\node at (-50,0) [vertex] (v1) {$u$};
		\node at (10,0) [vertex] (w1) {$v$};
		
		\node at (-80,20) [vertex] (w2) {$u_3$};
		\node at (-40,20) [vertex] (v2) {$u_1$};
		\node at (0,20) [vertex] (v3) {$u_2$};
		\node at (40,20) [vertex] (v4) {$v_3$};
		
		\node at (-50,40) [vertex] (v5) {$u_4$};
		\node at (-40,40) [vertex] (v11) {$u_5$};
		\node at (-30,40) [vertex] (v6) {$u_6$};
		\node at (-10,40) [vertex] (v7) {$u_7$};
		\node at (0,40) [vertex] (v12) {$u_8$};
		\node at (10,40) [vertex] (v8) {$u_9$};
		\node at (30,40) [vertex] (v9) {$u_{10}$};
		\node at (40,40) [vertex] (v13) {$u_{11}$};
		\node at (50,40) [vertex] (v10) {$u$};
		\node at (-90,40) [vertex] (w14) {$u_{10}$};
		\node at (-80,40) [vertex] (w15) {$u_{11}$};
		\node at (-70,40) [vertex] (w16) {$v$};
		
		\path
		
		(w2) edge[middlearrow] (w14)
		(w2) edge[middlearrow] (w15)
		(w2) edge[middlearrow] (w16)
		(v1) edge[middlearrow] (v2)
		(v1) edge[middlearrow] (v3)
		(v1) edge[middlearrow] (w2)

		(w1) edge[middlearrow] (v2)
		(w1) edge[middlearrow] (v3)
		(w1) edge[middlearrow] (v4)	
		
		(v2) edge[middlearrow] (v5)
		(v2) edge[middlearrow] (v6)
		(v2) edge[middlearrow] (v11)
		(v3) edge[middlearrow] (v7)
		(v3) edge[middlearrow] (v8)
		(v3) edge[middlearrow] (v12)
		(v4) edge[middlearrow] (v9)
		(v4) edge[middlearrow] (v10)	
		(v4) edge[middlearrow] (v13);
		
		\draw (-50.75,41.5) -- (-57.5,55);
		\draw (-29.25,41.5) -- (-22.5,55);	
		\draw (-57.5,55)--(-22.5,55);
		
		\draw (-10.75,41.5) -- (-17.5,55);
		\draw (10.75,41.5) -- (17.5,55);	
		\draw (-17.5,55)--(17.5,55);
		
		\draw (29.05,41.9) -- (22.5,55);
		\draw (50.95,41.9) -- (57.5,55);	
		\draw (57.5,55)--(22.5,55);	
		
		\draw (-90.95,41.9) -- (-97.5,55);
		\draw (-69.05,41.9) -- (-62.5,55);	
		\draw (-97.5,55)--(-62.5,55);	
		
	\end{tikzpicture} 
	\caption{Configuration for Theorem \ref{one common ON theorem} }
	\label{fig:two common ONsv2}
\end{figure}
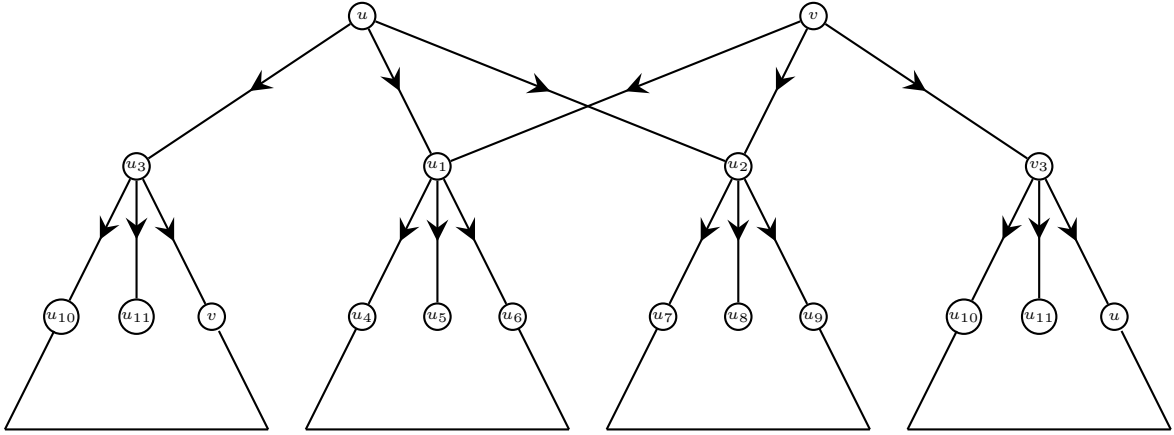

Theorem \ref{one common ON theorem} allows us to prove our first non-existence result for degree three, namely that there are no $(3,2;+1)$-digraphs.

\begin{theorem}\label{no (3,2;+1)-digraphs}
	There are no $(3,2;+1)$-digraphs.	
\end{theorem}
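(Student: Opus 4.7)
I would prove this by a short double-counting argument that combines Theorem~\ref{one common ON theorem} with two elementary consequences of $2$-geodecity. A $(3,2;+1)$-digraph has $M(3,2)+1 = 14$ vertices and, being diregular of degree $3$, $42$ arcs.

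The first step is to count unordered pairs of vertices that share a common out-neighbour. For each vertex $z$, the $\binom{|N^-(z)|}{2} = \binom{3}{2} = 3$ unordered pairs of in-neighbours of $z$ each share $z$ as a common out-neighbour, giving $14\cdot 3 = 42$ contributions in total. Since Theorem~\ref{one common ON theorem} forbids any pair from sharing more than one out-neighbour, these come from exactly $42$ distinct unordered pairs. The dual counting produces $42$ (likewise distinct) unordered pairs sharing a common in-neighbour.

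The second step is to record two consequences of $2$-geodecity. \emph{(i)} If $v \rightarrow w$, then $v$ and $w$ share no common out-neighbour: a common $z \in N^+(v) \cap N^+(w)$ would yield two walks $v \rightarrow z$ and $v \rightarrow w \rightarrow z$ of length $\le 2$ from $v$ to $z$; the in-neighbour dual is identical. \emph{(ii)} No unordered pair $\{v,w\}$ can share both an out-neighbour $z_1$ and an in-neighbour $z_2$, since $z_2 \rightarrow v \rightarrow z_1$ and $z_2 \rightarrow w \rightarrow z_1$ would be two distinct length-$2$ walks from $z_2$ to $z_1$.

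Finally I would combine these facts to force a numerical contradiction. The absence of $2$-cycles (immediate from $2$-geodecity) means the $42$ arcs of $G$ correspond to $42$ distinct unordered pairs, leaving $\binom{14}{2} - 42 = 49$ pairs with no arc between them. By \emph{(i)} and its dual, both the $42$ common-out-neighbour pairs and the $42$ common-in-neighbour pairs are contained in these $49$ arc-free pairs; by \emph{(ii)} these two families are disjoint, so together they would account for $42 + 42 = 84$ distinct arc-free pairs, exceeding the $49$ available. This contradiction rules out a $(3,2;+1)$-digraph. The proof uses no structural information beyond Theorem~\ref{one common ON theorem} and the definition of $2$-geodecity; the only subtle point is spotting that observations \emph{(i)} and \emph{(ii)} deliver exactly the containment and disjointness required for the numerical clash, so the main ``obstacle'' is simply recognising this bookkeeping rather than any deeper case analysis.
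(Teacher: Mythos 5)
Your proof is correct, and it takes a genuinely different route from the paper. The paper argues locally: it fixes a vertex $u$, examines the two in-neighbours $c_1,c_2$ of $u_3$ other than $u$, shows each is forced (after placing it in the Moore tree) to send an arc to $o(u)$, and thereby produces two vertices with two common out-neighbours, contradicting Theorem~\ref{one common ON theorem}. You instead run a global double count: diregularity gives $14\cdot\binom{3}{2}=42$ pairs with a common out-neighbour and $42$ with a common in-neighbour, Theorem~\ref{one common ON theorem} guarantees these are counted without repetition, and your observations (i) and (ii) — both immediate from $2$-geodecity, and correctly checked (note $z\notin\{v,w\}$ since loops and digons are excluded) — show that both families sit inside the $\binom{14}{2}-42=49$ non-adjacent pairs and are mutually disjoint, forcing $84\le 49$. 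Both proofs lean on Theorem~\ref{one common ON theorem} in an essential way (without the ``at most one'' guarantee your $42$ could collapse to far fewer distinct pairs and the count would not close), but yours replaces the paper's case analysis with pure bookkeeping and makes the quantitative slack explicit: the same computation shows the contradiction persists for any $2$-geodetic degree-$3$ digraph on fewer than $19$ vertices in which distinct vertices share at most one common out- and in-neighbour. The paper's local argument, on the other hand, is closer in style to the structural machinery it reuses elsewhere. Either way, your argument is complete and valid as written.
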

\begin{proof}
	Suppose that $G$ is a diregular $(3,2;+1)$-digraph. Fix an arbitrary vertex $u$ of $G$ with $N^+(u) = \{ u_1,u_2,u_3\} $ and draw the Moore tree rooted at $u$ as shown in Figure \ref{fig:Moore tree d = 3 k = 2 excess 1}. We set $N^-(u_1) = \{ u,a_1,a_2\} $, $N^-(u_2) = \{ u,b_1,b_2\} $ and $N^-(u_3) = \{ u,c_1,c_2\} $.
	
	At least one of the vertices $c_1,c_2$ is not equal to $o(u)$, say $c_1 \not = o(u)$. By $2$-geodecity we can assume that $c_1 = u_4$. By Theorem \ref{one common ON theorem} $c_1$ has no arcs to $T(u)-\{ u_3\}$, at most one arc to $N^+(u_2)$ and by $2$-geodecity has no arcs to $N^+(u_1) \cup N^+(u_3)$. It follows that $c_1$ must have exactly one arc to $N^+(u_2)$ as well as an arc to $o(u)$. If $c_2 = o(u)$ this would yield two paths of length $\leq 2$ from $c_1$ to $u_3$, so $c_2 \not = o(u)$. By the same reasoning $c_2$ has an arc to $o(u)$; however, we now have two distinct vertices with at least two common out-neighbours, contradicting Theorem \ref{one common ON theorem}. 
\end{proof}

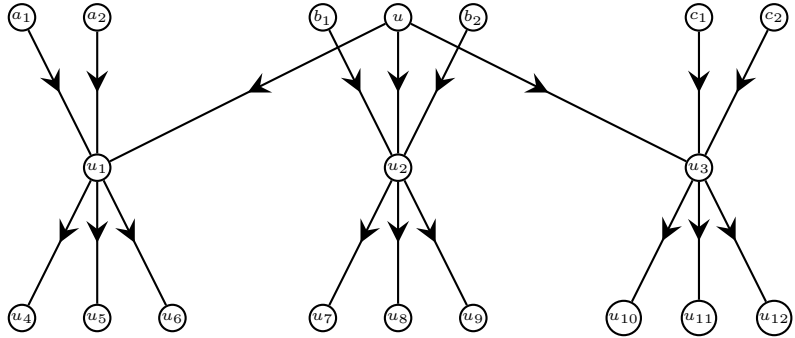
\begin{figure}\centering
	\begin{tikzpicture}[midarrow=stealth,x=0.2mm,y=-0.2mm,inner sep=0.1mm,scale=5,
		thick,vertex/.style={circle,draw,minimum size=10,font=\tiny,fill=white},edge label/.style={fill=white}]
		\tiny
		
		\node at (-50,0) [vertex] (a1) {$a_1$};
		\node at (-40,0) [vertex] (a2) {$a_2$};
		
		\node at (-10,0) [vertex] (b1) {$b_1$};
		\node at (10,0) [vertex] (b2) {$b_2$};
		
		\node at (40,0) [vertex] (c1) {$c_1$};
		\node at (50,0) [vertex] (c2) {$c_2$};
		
		\node at (0,0) [vertex] (v1) {$u$};
		
		\node at (-40,20) [vertex] (v2) {$u_1$};
		\node at (0,20) [vertex] (v3) {$u_2$};
		\node at (40,20) [vertex] (v4) {$u_3$};
		
		\node at (-50,40) [vertex] (v5) {$u_4$};
		\node at (-40,40) [vertex] (v11) {$u_5$};
		\node at (-30,40) [vertex] (v6) {$u_6$};
		\node at (-10,40) [vertex] (v7) {$u_7$};
		\node at (0,40) [vertex] (v12) {$u_8$};
		\node at (10,40) [vertex] (v8) {$u_9$};
		\node at (30,40) [vertex] (v9) {$u_{10}$};
		\node at (40,40) [vertex] (v13) {$u_{11}$};
		\node at (50,40) [vertex] (v10) {$u_{12}$};
		
		\path
		(a1) edge[middlearrow] (v2)
		(a2) edge[middlearrow] (v2)
		(b1) edge[middlearrow] (v3)
		(b2) edge[middlearrow] (v3)
		(c1) edge[middlearrow] (v4)
		(c2) edge[middlearrow] (v4)
		(v1) edge[middlearrow] (v2)
		(v1) edge[middlearrow] (v3)
		(v1) edge[middlearrow] (v4)	
		
		(v2) edge[middlearrow] (v5)
		(v2) edge[middlearrow] (v6)
		(v2) edge[middlearrow] (v11)
		(v3) edge[middlearrow] (v7)
		(v3) edge[middlearrow] (v8)
		(v3) edge[middlearrow] (v12)
		(v4) edge[middlearrow] (v9)
		(v4) edge[middlearrow] (v10)	
		(v4) edge[middlearrow] (v13);	
		
	\end{tikzpicture} 
	\caption{The Moore tree for Theorem \ref{no (3,2;+1)-digraphs}}
	\label{fig:Moore tree d = 3 k = 2 excess 1}
\end{figure}

\section{Automorphisms of digraphs with excess one}\label{Automorphisms}

In \cite{Sil2} Sillasen uses counting arguments to deduce information on the form of the subdigraph of an almost Moore digraph that is induced by the set of vertices fixed by an automorphism. Using the same approach we can deduce a strong result on the action of automorphisms of a digraph with excess one. This will later help us to analyse the structure of the outlier function of a $(d,k;+1)$-digraph.

Let $G$ be a $(d,k;+1)$-digraph and $\phi  \in \aut(G)$ a non-identity automorphism of $G$.  Denote by $\fix(\phi )$ the set of vertices of $G$ that are fixed by $\phi $ and let $\FIX(\phi )$ be the subdigraph induced by $\fix(\phi )$.

Firstly we show that the fix-set $\fix(\phi )$ is closed under the action of the outlier automorphism.

\begin{lemma}\label{outliers of fixvertices are fixvertices}
	If $u \in \fix(\phi )$, then $o^j(u) \in \fix(\phi )$ for all $j \in \mathbb{N}$.
\end{lemma}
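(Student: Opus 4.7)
The approach is to show that the outlier function $o$ commutes with every digraph automorphism of $G$; once this is in hand, the conclusion follows by a one-line induction.

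To prove $\phi \circ o = o \circ \phi$ for every $\phi \in \aut(G)$, I would rely on the characterisation of $o(u)$ as the \emph{unique} vertex $v$ of $G$ with $d(u,v) \geq k+1$, which is the defining property of the outlier in a digraph of excess one (and was recorded after Lemma \ref{lem:known results}). Because $\phi$ is a digraph automorphism it preserves directed distances, so
\[ d(\phi(u), \phi(o(u))) = d(u, o(u)) \geq k+1. \]
Applying the uniqueness of the outlier now to the vertex $\phi(u)$, we conclude $\phi(o(u)) = o(\phi(u))$, as required.

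With the identity $\phi \circ o = o \circ \phi$ in hand the lemma is immediate. If $u \in \fix(\phi)$, i.e. $\phi(u) = u$, then $\phi(o(u)) = o(\phi(u)) = o(u)$, so $o(u) \in \fix(\phi)$. A trivial induction on $j$ (using $\phi \circ o^{j} = o^{j} \circ \phi$) then yields $o^j(u) \in \fix(\phi)$ for every $j \in \mathbb{N}$. I do not foresee any real obstacle; the only mild subtlety is to confirm that the outlier is characterised purely by the distance structure of $G$, which any automorphism necessarily respects.
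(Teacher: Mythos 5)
Your proof is correct and rests on the same idea as the paper's: an automorphism preserves directed distances, and the outlier is the unique vertex at distance $\geq k+1$, so $\phi$ must fix $o(u)$ whenever it fixes $u$. You package this as the general commutation identity $\phi \circ o = o \circ \phi$ before specialising, whereas the paper argues directly that $\phi$ permutes the ball of radius $k$ about $u$ and hence fixes its one-element complement, but the substance is identical.
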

\begin{proof}
	Let $u \in \fix(\phi )$.  If $d(u,v) \leq k$, then $d(\phi (u),\phi (v)) = d(u,\phi (v)) \leq k$, so the only vertex of $V(G)$ that lies at distance $\geq k+1$ from $u$ is $\phi (o(u))$ and so $\phi (o(u)) = o(u)$ and $o(u) \in \fix(\phi )$.  Iteration of $o$ implies the result.
\end{proof}
As the outlier automorphism is fixed-point-free, it follows from Lemma \ref{outliers of fixvertices are fixvertices} that any fix-set $\fix(\phi )$ cannot consist of a single vertex and, if $\phi $ fixes just two vertices $u,u'$ of $G$, then these two vertices are outliers of each other, i.e. $o(u) = u'$ and $o(u') = u$.

\begin{corollary}
	If $|\fix (\phi )| \leq 2$, then either $\fix(\phi ) = \emptyset $ and $\FIX(\phi )$ is the null digraph, or $|\fix(\phi )| = 2$ and $\FIX(\phi ) \cong 2K_1$.
\end{corollary}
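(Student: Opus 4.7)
The plan is to dispatch the three possible cardinalities $|\fix(\phi)| \in \{0,1,2\}$ separately, reducing the problem to the observation that $o$ is fixed-point-free and that outliers are far from their preimages.

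First, the case $|\fix(\phi)| = 0$ is immediate: the induced subdigraph on the empty set is the null digraph by definition. The case $|\fix(\phi)| = 1$ is the one genuine thing to rule out. Suppose for contradiction that $\fix(\phi) = \{u\}$. By Lemma \ref{outliers of fixvertices are fixvertices}, $o(u) \in \fix(\phi)$. But $o$ is fixed-point-free, since $d(u,o(u)) \geq k+1 \geq 3$ forces $o(u) \neq u$. Thus $o(u)$ is a second fixed vertex, contradicting $|\fix(\phi)| = 1$.

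It remains to handle $|\fix(\phi)| = 2$, say $\fix(\phi) = \{u,u'\}$. The paragraph preceding the corollary already notes that $o$ must swap these two vertices, so $u' = o(u)$ and $u = o(u')$. The key step is then to observe that by the very definition of the outlier function, $d(u,u') = d(u,o(u)) \geq k+1 > 1$, so there is no arc from $u$ to $u'$; symmetrically, $d(u',u) \geq k+1$ excludes an arc from $u'$ to $u$. Hence $\FIX(\phi)$ consists of two vertices and no arcs, which is precisely $2K_1$.

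No step poses a real obstacle; the only subtlety is making explicit that $o$ has no fixed points (which is forced by the definition of the outlier, $d(u,o(u)) \geq k+1$) so that Lemma \ref{outliers of fixvertices are fixvertices} actually produces a \emph{new} fixed vertex in the singleton case rather than the one we started with.
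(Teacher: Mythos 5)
Your proof is correct and follows the same route as the paper, which justifies this corollary in the paragraph immediately preceding it: the singleton case is excluded because $o$ is fixed-point-free and Lemma \ref{outliers of fixvertices are fixvertices} would force a second fixed vertex, and two fixed vertices must be mutual outliers. Your only addition is to spell out explicitly that mutual outliers are at distance $\geq k+1$ and hence non-adjacent, which the paper leaves implicit but which is exactly the right observation to conclude $\FIX(\phi)\cong 2K_1$.
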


We will now assume that $\fix(\phi )$ contains at least three vertices.

\begin{lemma}\label{leq k paths contained in Fix}
	If $u,v \in \fix(\phi )$ and $P$ is a path of length $\leq k$ from $u$ to $v$, then all vertices of $P$ are contained in $\fix(\phi )$.
\end{lemma}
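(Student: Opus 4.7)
The plan is to apply $\phi$ to the path $P$ and use the $k$-geodecity of $G$ to conclude that $\phi$ sends $P$ to itself vertex-by-vertex.

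Write $P = u_0 u_1 u_2 \dots u_\ell$, where $u_0 = u$, $u_\ell = v$ and $\ell \leq k$. Because $\phi$ is an automorphism of $G$, the image $\phi(P) = \phi(u_0) \phi(u_1) \dots \phi(u_\ell)$ is also a directed walk of length $\ell$ in $G$. Since $u, v \in \fix(\phi)$ we have $\phi(u_0) = u$ and $\phi(u_\ell) = v$, so $\phi(P)$ is a walk from $u$ to $v$ of length $\ell \leq k$.

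Now I would invoke $k$-geodecity: there is at most one directed walk of length $\leq k$ between any ordered pair of vertices, so the walk $\phi(P)$ must coincide with the walk $P$. Comparing vertices position-by-position along these two walks gives $\phi(u_i) = u_i$ for every $0 \leq i \leq \ell$, whence every vertex of $P$ lies in $\fix(\phi)$.

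There is no real obstacle here: the entire statement is a one-line consequence of the fact that automorphisms preserve walk-length together with the uniqueness of short walks guaranteed by $k$-geodecity. The only point worth being careful about is that $P$, viewed merely as a sequence of arcs, is a \emph{walk} rather than necessarily a shortest path, but this is immaterial since $k$-geodecity is phrased in terms of walks of length at most $k$, not paths.
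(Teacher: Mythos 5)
Your argument is correct and is essentially the paper's own proof: the paper phrases it contrapositively (if some vertex of $P$ were not fixed, then $P$ and $\phi(P)$ would be two distinct $\leq k$-walks from $u$ to $v$), while you argue directly that $k$-geodecity forces $\phi(P)=P$ vertex-by-vertex. The two formulations are interchangeable and no further comment is needed.
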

\begin{proof}
	Let $u, v$ and $P$ be as described.  Suppose that there is a vertex $u' \in V(P)$ that is not fixed by $\phi $.  Then $P$ and $\phi (P)$ are distinct $\leq k$-paths from $u$ to $v$, contradicting $k$-geodecity.
\end{proof}

\begin{lemma}\label{FIX diregular}
	The digraph $\FIX(\phi )$ is diregular.
\end{lemma}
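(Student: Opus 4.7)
The plan is to derive a clean adjacency-matrix identity encoding the Moore-ball structure of $\FIX(\phi)$ and then extract diregularity by multiplying it by the adjacency matrix from both sides, using that the outlier function $o$ restricts to an automorphism of $\FIX(\phi)$.

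First I would observe that $\FIX(\phi)$, as a subdigraph of $G$, inherits $k$-geodecity: any two distinct $\leq k$-walks in $\FIX(\phi)$ would be two such walks in $G$. Combining this with Lemma~\ref{leq k paths contained in Fix}, for $u, v \in \fix(\phi)$ we have $d_{\FIX(\phi)}(u, v) \leq k$ if and only if $v \neq o(u)$: if $v = o(u)$ then $d_{\FIX(\phi)}(u,v) \geq d_G(u,v) \geq k+1$, while if $v \neq o(u)$ then the unique $\leq k$-path from $u$ to $v$ in $G$ lies in $\fix(\phi)$ by Lemma~\ref{leq k paths contained in Fix}. Moreover, Lemma~\ref{outliers of fixvertices are fixvertices} guarantees that $o$ descends to a fixed-point-free automorphism of $\FIX(\phi)$.

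Let $n = |\fix(\phi)|$, let $A$ be the adjacency matrix of $\FIX(\phi)$, let $J$ be the $n \times n$ all-ones matrix and let $P$ be the permutation matrix of $o|_{\fix(\phi)}$. Because $\FIX(\phi)$ is $k$-geodetic, for $1 \leq t \leq k$ the entry $(A^t)_{uv}$ equals $1$ iff $d_{\FIX(\phi)}(u,v) = t$ and is $0$ otherwise, and in particular $(A^t)_{uu} = 0$ (a nontrivial closed walk of length $\leq k$ from $u$ would compete with the length-zero walk, violating $k$-geodecity). Summing these identities over $0 \leq t \leq k$ and invoking the distance characterisation above yields the central identity
\[ I + A + A^2 + \dots + A^k = J - P. \]

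Since $o$ is an automorphism of $\FIX(\phi)$, $P$ commutes with $A$. Multiplying the central identity by $A$ from the left gives $A + A^2 + \dots + A^{k+1} = AJ - AP$, and multiplying from the right gives $A + A^2 + \dots + A^{k+1} = JA - PA$. Using $PA = AP$ and equating the two right-hand sides yields $AJ = JA$. Since the $(u,v)$-entry of $AJ$ is $d^+_{\FIX(\phi)}(u)$ and that of $JA$ is $d^-_{\FIX(\phi)}(v)$, we deduce $d^+_{\FIX(\phi)}(u) = d^-_{\FIX(\phi)}(v)$ for every pair $u, v \in \fix(\phi)$, so $\FIX(\phi)$ is diregular. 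The only delicate step is establishing the central matrix identity; once it is in place, the rest of the argument is essentially formal.
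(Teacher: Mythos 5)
Your proof is correct, and it takes a genuinely different route from the paper's. The paper argues combinatorially: for a pair $u,v\in\fix(\phi )$ it matches the out-neighbours of $u$ with the in-neighbours of $v$ via the unique $\leq k$-paths between them, using Lemma \ref{leq k paths contained in Fix} to transfer membership in $\fix(\phi )$ from one end of each path to the other, with separate case analysis according to whether $v\in o(N^+(u))$ and whether $u\rightarrow v$. You instead establish the identity $I+A+A^2+\dots +A^k=J-P$ for $\FIX(\phi )$ itself --- the analogue for the subdigraph of Equation \ref{Counting paths for excess one} --- which is legitimate here precisely because that identity does not presuppose any regularity: the `at most one' direction is $k$-geodecity inherited from $G$, and the `exactly one' direction is Lemma \ref{leq k paths contained in Fix} together with the fact that $\FIX(\phi )$ is induced, so the unique $\leq k$-path in $G$ between two fixed vertices lies wholly in $\FIX(\phi )$. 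The commutation $PA=AP$ then extracts $AJ=JA$ and hence diregularity formally. Your argument is shorter and avoids the case analysis, and it anticipates Corollary \ref{fixset} by exhibiting $\FIX(\phi )$ as satisfying the defining matrix equation of a digraph of excess one with respect to the restricted outlier permutation; the paper's argument is more elementary and makes no appeal to linear algebra. One small point worth making explicit in your write-up: $o$ restricts to a permutation of $\fix(\phi )$ (not merely a map into it) because $o$ is injective on the finite set $\fix(\phi )$, and it is an automorphism of $\FIX(\phi )$ because $\FIX(\phi )$ is an induced subdigraph; both facts are immediate but are what justify introducing $P$ and asserting $PA=AP$.
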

\begin{proof}
	For any vertex $u \in \fix(\phi )$ we will denote the out-degree and in-degree of $u$ in the subdigraph $\FIX(\phi )$ by $d_{\phi }^+(u)$ and $d_{\phi }^-(u)$ respectively. We will show that for any two (not necessarily distinct) vertices $u,v \in \fix(\phi )$ we have $d_{\phi }^+(u) = d_{\phi }^-(v)$; this implies the desired result. For this pair $u,v$ we will write $N^+(u) = \{ u_1,u_2,\dots ,u_d\} $ and $N^-(v) = \{ v_1,\dots ,v_d\} $.
	
	Assume that $v \not \in o(N^+(u))$.  Then for $1 \leq i \leq d$ there is a unique $\leq k$-path $P_i$ from $u_i$ to $v$. Suppose that $u \not \rightarrow v$.  By $k$-geodecity, none of the paths $P_i$ pass through the same in-neighbour of $v$, so without loss of generality there is a $\leq (k-1)$-path from $u_i$ to $v_i$ for $1 \leq i \leq d$.  By Lemma \ref{leq k paths contained in Fix}, it follows that $u_i \in \fix(\phi )$ if and only if $v_i \in \fix(\phi )$, so that $d^+_{\phi }(u) = d^-_{\phi }(v)$. If $u \rightarrow v$, then repeating this reasoning for the out-neighbours of $u$ other than $v$ shows that we still have $d_{\phi }^+(u) = d_{\phi }^-(v)$.
	
	Now suppose that $v \in o(N^+(u))$; say $v = o(u_1)$.  If $u \not \rightarrow v$, then for $2 \leq i \leq d$ we can assume that there is a $\leq (k-1)$-path from $u_i$ to $v_i$ and as before $u_i \in \fix(\phi )$ if and only if $v_i \in \fix(\phi )$ for $2 \leq i \leq d$.  There is an arc $u \rightarrow u_1$, so as $o$ is an automorphism there exists an arc $o(u) \rightarrow o(u_1) = v$, giving $o(u) \in N^-(v)$.  There are $\leq k$-paths from $u$ to each $v_i$ for $2 \leq i \leq d$, so we must have $o(u) = v_1$.  As $o(u) \in \fix(\phi )$ by Lemma \ref{outliers of fixvertices are fixvertices} we have $v_1 \in \fix(\phi )$.  Also by Lemma \ref{outliers of fixvertices are fixvertices} we have $o^-(v) = u_1 \in \fix(\phi )$, so again we see that $d^+_{\phi }(u) = d^-_{\phi }(v)$. Again the case $u \rightarrow v$ is similar.
	
	It follows that $\FIX(\phi )$ is diregular.
\end{proof}

\begin{lemma}\label{isometric}
	The digraph $\FIX(\phi )$ is an isometric subdigraph of $G$ and has diameter $k+1$.
\end{lemma}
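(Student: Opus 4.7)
The plan is to verify the two assertions — isometry and diameter exactly $k+1$ — by dissecting pairs of fixed vertices according to their distance in $G$.

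I begin with the easy case. If $u, v \in \fix(\phi)$ satisfy $d_G(u, v) \leq k$, then $k$-geodecity provides a unique walk of length at most $k$ from $u$ to $v$ in $G$, and Lemma \ref{leq k paths contained in Fix} places all of its vertices in $\fix(\phi)$. Hence the same walk realises the distance in $\FIX(\phi)$ and $d_{\FIX(\phi)}(u, v) = d_G(u, v)$.

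The only other possibility is $v = o(u)$, for which I first pin down the value of $d_G(u, o(u))$. The Moore tree rooted at $u$ contains $M(d,k)$ distinct vertices and leaves $o(u)$ as the sole vertex outside it. The $d$ in-neighbours of $o(u)$ are neither $o(u)$ itself (as $G$ has no loops for $k \geq 2$) nor $u$ (which would force $d_G(u, o(u)) = 1$), so each sits at some level $j \in \{1, \dots, k\}$ of the Moore tree. If such an in-neighbour sat at level $j < k$, then $d_G(u, o(u)) \leq j + 1 \leq k$, contradicting the defining property of the outlier. Thus every in-neighbour of $o(u)$ lies at level exactly $k$ and $d_G(u, o(u)) = k+1$; in particular $G$ itself has diameter $k+1$.

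To finish the isometry claim I must produce a path of length $k+1$ in $\FIX(\phi)$ from $u$ to $o(u)$. The standing assumption $|\fix(\phi)| \geq 3$, together with Lemma \ref{outliers of fixvertices are fixvertices} and the fact that $o$ has no fixed points, yields some $w \in \fix(\phi) \setminus \{u, o(u)\}$. Since $w \neq o(u)$ and $w \neq u$ we have $d_G(u, w) \leq k$, so by the first paragraph there is a path from $u$ to $w$ lying entirely in $\FIX(\phi)$, whose first arc exhibits an out-neighbour of $u$ belonging to $\fix(\phi)$. Combined with Lemma \ref{FIX diregular}, this shows the common out-degree of $\FIX(\phi)$ is at least $1$, so I may choose $u_1 \in N^+(u) \cap \fix(\phi)$. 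Since $o$ is a bijection and $u_1 \neq u$, we have $o(u_1) \neq o(u)$, so $d_G(u_1, o(u)) \leq k$; the triangle inequality $k+1 = d_G(u, o(u)) \leq 1 + d_G(u_1, o(u))$ then forces $d_G(u_1, o(u)) = k$. The first paragraph places the unique $k$-path from $u_1$ to $o(u)$ inside $\FIX(\phi)$, and prepending the arc $u \to u_1$ produces a $(k+1)$-path from $u$ to $o(u)$ in $\FIX(\phi)$. Hence $d_{\FIX(\phi)}(u, o(u)) = k+1 = d_G(u, o(u))$, so $\FIX(\phi)$ is isometric in $G$, and since this maximal distance is attained the diameter of $\FIX(\phi)$ is exactly $k+1$.

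The main obstacle is the third paragraph: producing the required $(k+1)$-path in $\FIX(\phi)$ hinges on ruling out the degenerate possibility that $\FIX(\phi)$ is edgeless, and it is the hypothesis $|\fix(\phi)| \geq 3$ together with the outlier closure Lemma \ref{outliers of fixvertices are fixvertices} that makes this delicate step go through.
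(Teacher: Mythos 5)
Your proof is correct and follows essentially the same route as the paper's: pairs at distance at most $k$ are handled via Lemma \ref{leq k paths contained in Fix}, and the outlier pair is handled by using the positive degree of $\FIX(\phi)$ (you step forward through an out-neighbour of $u$ where the paper steps backward through an in-neighbour of $o(u)$, a trivial mirror image). Your extra verification that $d_G(u,o(u))=k+1$ exactly is a welcome detail the paper leaves implicit.
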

\begin{proof}
	As $\fix(\phi )$ is a subdigraph of $G$ we certainly have $d_{\FIX(\phi )}(u,v) \geq d_G(u,v)$ for all $u,v \in \fix(\phi )$.  Let $u,v \in \FIX(\phi )$ be arbitrary.  If $v \in T_k(u) \cap \fix(\phi )$, then by Lemma \ref{leq k paths contained in Fix} the path from $u$ to $v$ in $G$ also belongs to $\FIX(\phi )$, so that $d_G(u,v) = d_{\FIX(\phi )}(u,v)$.  
	
	By Lemma \ref{FIX diregular}, $\FIX(\phi )$ is diregular with degree $\geq 1$ (as we are assuming that $|\fix(\phi )| \geq 3$). Hence if $v = o(u)$ in $G$, then $o(u)$ has an in-neighbour $v'$ in $\fix(\phi )$, so that by the preceding argument $d_G(u,v') = d_{\FIX(\phi )}(u,v')$ and thus $ d_{\FIX(\phi )}(u,o(u)) = k+1$. Therefore $\FIX(\phi )$ is an isometric subdigraph of $G$ and, since $o(u) \in \fix(\phi )$ for any $u \in \fix(\phi )$, the diameter of $\FIX(\phi )$ is exactly $k+1$.\end{proof}

\begin{corollary}\label{fixset}
	The digraph $\FIX(\phi )$ is a $(d',k;+1)$-digraph for some $d'$ in the range $1 \leq d' \leq d-1$.
\end{corollary}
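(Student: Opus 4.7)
The plan is to verify directly that $\FIX(\phi)$ satisfies each of the four defining properties of a $(d',k;+1)$-digraph: diregularity, $k$-geodecity, order exactly $M(d',k)+1$, and a degree $d'$ in the stated range. Diregularity with some common out- and in-degree $d'$ is already supplied by Lemma \ref{FIX diregular}. The $k$-geodecity property is essentially free: any two distinct walks of length $\leq k$ with common endpoints inside $\FIX(\phi)$ would also live in $G$, contradicting the $k$-geodecity of $G$.

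For the order count I would fix an arbitrary vertex $u \in \fix(\phi)$ and draw the Moore tree of depth $k$ for $\FIX(\phi)$ rooted at $u$. Since $\FIX(\phi)$ is diregular of degree $d'$ and is itself $k$-geodetic, this tree has exactly $M(d',k)$ distinct vertices, all lying in $\fix(\phi)$. By Lemma \ref{outliers of fixvertices are fixvertices} the outlier $o(u)$ also belongs to $\fix(\phi)$, and by the isometric property in Lemma \ref{isometric} we have $d_{\FIX(\phi)}(u,o(u)) = d_G(u,o(u)) = k+1$, so $o(u)$ does not appear in the Moore tree. This yields $|\fix(\phi)| \geq M(d',k)+1$. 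In the opposite direction, Lemma \ref{isometric} gives $\diam(\FIX(\phi)) = k+1$, so every vertex of $\fix(\phi)$ either lies in the Moore tree or sits at distance exactly $k+1$ from $u$; the unique candidate for the latter is $o(u)$. Hence $|\fix(\phi)| = M(d',k)+1$ exactly.

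It remains to pin down $d'$. The lower bound $d' \geq 1$ follows immediately from the fact that $\FIX(\phi)$ has diameter $k+1 \geq 3$ (Lemma \ref{isometric}), which would be impossible for a digraph without arcs; this is also consistent with our standing hypothesis that $|\fix(\phi)| \geq 3$. The main obstacle is the upper bound $d' \leq d-1$, i.e.\ ruling out $d' = d$. The idea here is to exploit strong connectivity: every vertex of $G$ reaches every other vertex within $k+1$ steps through its Moore tree, so $G$ is strongly connected. If we had $d' = d$, then every out-neighbour in $G$ of every fixed vertex would again be fixed, so $\fix(\phi)$ would be closed under $N^+$ in $G$; iterating and invoking strong connectivity would force $\fix(\phi) = V(G)$, contradicting the assumption that $\phi$ is a non-identity automorphism. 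This yields $d' \leq d-1$ and completes the verification.
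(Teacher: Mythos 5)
Your proof is correct and follows essentially the same route as the paper: $k$-geodecity is inherited, diregularity comes from Lemma \ref{FIX diregular}, and the order count $M(d',k)+1$ falls out of the Moore tree together with the isometry and diameter statements of Lemma \ref{isometric}. The only difference is that you spell out the closure-under-$N^+$/strong-connectivity argument for $d' \leq d-1$, which the paper leaves implicit in the remark that $\phi$ is not the identity.
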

\begin{proof}
	As a subdigraph of $G$, $\FIX(\phi )$ is $k$-geodetic. By Lemma \ref{FIX diregular}, $\FIX(\phi )$ is diregular with degree $d'$. We are assuming that $\fix(\phi )$ contains at least three vertices, so by Lemma \ref{leq k paths contained in Fix} $\FIX(\phi )$ contains a path and $d' \geq 1$. We are also assuming that $\phi $ is not the identity automorphism, so $\phi $ does not fix all vertices of $G$ and $d' \leq d-1$.  By diregularity and Lemma \ref{isometric}, it follows that $\FIX(\phi )$ has order $M(d',k)+1$, so $\FIX(\phi )$ is a $(d',k;+1)$-digraph.    
\end{proof}

As there are no diregular $(2,k;+1)$-digraphs \cite{Sil2}, we have the following result.

\begin{corollary}\label{automorphism fixset}
	If $G$ is a $(d,k;+1)$-digraph and $\phi $ is a non-identity automorphism of $G$, then $\FIX(\phi )$ is either the null digraph, a pair of isolated vertices, a directed $(k+2)$-cycle or a $(d',k;+1)$-digraph, where $3 \leq d' \leq d-1$.
\end{corollary}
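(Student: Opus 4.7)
The plan is to combine the preceding results in a straightforward case analysis on the size of $\fix(\phi)$. First, I would handle the small cases using the corollary immediately following Lemma \ref{outliers of fixvertices are fixvertices}: if $|\fix(\phi)| \leq 2$, then $\FIX(\phi)$ is either the null digraph (when $\fix(\phi) = \emptyset$) or a pair of isolated vertices $2K_1$ (when $|\fix(\phi)| = 2$, since the two fixed points must be mutual outliers and hence non-adjacent by $k$-geodecity). This disposes of the first two possibilities in the statement.

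Next, assuming $|\fix(\phi)| \geq 3$, I would invoke Corollary \ref{fixset} to conclude that $\FIX(\phi)$ is a $(d',k;+1)$-digraph for some $1 \leq d' \leq d-1$. The task then reduces to ruling out $d' = 2$ and interpreting the boundary value $d' = 1$. For $d' = 2$, I cite the result of Sillasen \cite{Sil2} (already used in Corollary \ref{fixset}) that there are no diregular $(2,k;+1)$-digraphs, so this case is impossible. For $d' = 1$, the order of $\FIX(\phi)$ is $M(1,k) + 1 = k+2$, and a diregular $1$-geodetic digraph of out-degree one is simply a disjoint union of directed cycles; isometry and diameter $k+1$ (Lemma \ref{isometric}) force $\FIX(\phi)$ to be a single directed $(k+2)$-cycle. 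The remaining values $3 \leq d' \leq d-1$ give the final alternative.

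Since every step is a direct appeal to a result stated earlier in the excerpt, there is no serious obstacle; the only point requiring a moment's care is the $d' = 1$ case, where one must verify that ``$(1,k;+1)$-digraph'' unambiguously means a directed $(k+2)$-cycle. This follows because a diregular digraph of in- and out-degree one is a disjoint union of directed cycles, and the diameter condition from Lemma \ref{isometric} forces connectedness and length exactly $k+2$.
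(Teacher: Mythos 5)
Your proposal is correct and follows essentially the same route the paper intends: the paper derives this corollary directly from Corollary \ref{fixset} together with the remark that there are no diregular $(2,k;+1)$-digraphs \cite{Sil2}, with the $|\fix(\phi)|\leq 2$ cases covered by the earlier corollary and the $d'=1$ case being the directed $(k+2)$-cycle. Your extra care on the $d'=1$ case (diregularity of degree one forces a union of cycles, and the diameter-$(k+1)$ condition forces a single $(k+2)$-cycle) is a valid filling-in of a step the paper leaves implicit.
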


\section{Structure of the outlier function}\label{section:spectrum digraphs excess one}

We will now make use of some of the results from the preceding sections to deduce useful information on the permutation structure of the outlier function of a digraph with excess one. Let $G$ be a $(d,k;+1)$-digraph. By Lemma \ref{lem:known results} $G$ is diregular and the outlier function of $G$ is an automorphism. Therefore every vertex $u$ of $G$ has an associated order $\omega (u)$, which is the smallest integer such that $o^{\omega }(u) = u$. Cholily et al. have used the vertex orders of the repeat function to successfully analyse the structure of digraphs with defect one in such papers as \cite{BasChoMil} and \cite{ChoBasUtt}. We can immediately apply the method of \cite{ChoBasUtt} to make a connection between the vertex orders for the outlier function and the existence of short paths in $G$. 

\begin{lemma}\label{divisibility of orders}
	Let $u_0,u_1,\dots ,u_r$ be a path of length $r$ in $G$, where $r \leq k$, and put $t = \lcm(\omega (u_0),\omega (u_r))$.  Then $\omega (u_i)$ divides $t$ for $1 \leq i \leq r-1$.
\end{lemma}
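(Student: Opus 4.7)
The plan is to apply the automorphism $\phi = o^t$, where $o$ is the outlier function and $t = \lcm(\omega(u_0), \omega(u_r))$, and then exploit Lemma \ref{leq k paths contained in Fix}. Since $o$ is an automorphism of $G$ by Lemma \ref{lem:known results}, so is every power $o^t$. By definition of $t$, both $\omega(u_0) \mid t$ and $\omega(u_r) \mid t$, so $\phi(u_0) = u_0$ and $\phi(u_r) = u_r$, i.e.\ the two endpoints of the path lie in $\fix(\phi)$.

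Now observe that $u_0 \to u_1 \to \dots \to u_r$ is a directed path of length $r \leq k$ whose endpoints belong to $\fix(\phi)$. If $\phi$ is the identity automorphism, then $o^t$ fixes every vertex of $G$, so trivially $\omega(u_i) \mid t$ for each $i$ and we are done. Otherwise $\phi$ is a non-identity automorphism and Lemma \ref{leq k paths contained in Fix} applies directly: every vertex of the path, including each intermediate $u_i$ with $1 \leq i \leq r-1$, must lie in $\fix(\phi)$, for otherwise $\phi$ would map the path to a distinct $\leq k$-path from $u_0$ to $u_r$, violating $k$-geodecity.

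Therefore $o^t(u_i) = u_i$ for each $i$, which by the definition of $\omega(u_i)$ as the smallest positive integer with $o^{\omega(u_i)}(u_i) = u_i$ forces $\omega(u_i) \mid t$, as required. There is no substantive obstacle in this argument: the bulk of the work was already done in establishing Lemma \ref{leq k paths contained in Fix}, and the present statement is essentially its consequence once the correct power of $o$ is isolated. The only point to watch is simply noting that $o^t$ is a legitimate automorphism of $G$ and that the hypothesis $r \leq k$ is exactly what is needed to invoke the path-closure lemma.
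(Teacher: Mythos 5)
Your proof is correct and is essentially the paper's argument: the paper simply inlines the contradiction (if $\omega(u_i)\nmid t$ then $o^t$ sends the path to a distinct $\leq k$-path between the fixed endpoints $u_0$ and $u_r$, violating $k$-geodecity) rather than routing it through Lemma \ref{leq k paths contained in Fix}, whose proof is that same observation. Your explicit handling of the case where $o^t$ is the identity is a harmless extra care.
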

\begin{proof}
	Suppose that for some $1 \leq i \leq r-1$ the order of $u_i$ does not divide $t$.  Then $o^t(u_i) \not = u_i$, so we obtain two $\leq k$-paths $u_0,u_1,\dots ,u_i,\dots ,u_r$ and $o^t(u_0),o^t(u_1),\dots ,o^t(u_i),\dots ,o^t(u_r) = u_0,o^t(u_1),\dots ,o^t(u_i),\dots ,u_r$ from $u_0$ to $u_r$, a contradiction.
\end{proof}

\begin{corollary}\label{paths between minimal vertices}
	If $p$ is the minimum vertex order of $G$ and $W$ is a walk of length $\leq k$ between two vertices $u, v$ with order $p$, then every vertex on $W$ has order $p$.
\end{corollary}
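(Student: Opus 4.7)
The plan is to apply Lemma \ref{divisibility of orders} directly, specialised to the case where both endpoints of the walk have minimal order. First observe that, since $G$ is $k$-geodetic and $W$ has length at most $k$, the walk $W$ must in fact be a path: if some vertex were repeated along $W$, we could excise the resulting closed subwalk to obtain two distinct walks of length $\leq k$ between a common pair of vertices, violating $k$-geodecity. Hence $W$ satisfies the hypotheses of Lemma \ref{divisibility of orders}.

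Now write $W = u_0, u_1, \dots, u_r$ with $u_0 = u$, $u_r = v$ and $r \leq k$, and set $t = \lcm(\omega(u_0), \omega(u_r))$. By hypothesis $\omega(u_0) = \omega(u_r) = p$, so $t = \lcm(p,p) = p$. Lemma \ref{divisibility of orders} then gives $\omega(u_i) \mid p$ for each intermediate index $1 \leq i \leq r-1$. Since $p$ is the minimum vertex order of $G$ we also have $\omega(u_i) \geq p$, and the two conditions together force $\omega(u_i) = p$. The endpoints $u_0$ and $u_r$ have order $p$ by hypothesis, so every vertex on $W$ has order $p$, as required.

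There is no substantive obstacle here: the corollary is essentially a direct specialisation of the preceding lemma to the case $\omega(u) = \omega(v) = p$, combined with the minimality of $p$. The only small point worth noting is the reduction from a walk to a path, which is immediate from $k$-geodecity.
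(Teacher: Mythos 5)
Your proof is correct and is essentially the same argument as the paper's: the paper simply applies $o^p$ to $W$ directly and notes that a vertex of order $>p$ would not be fixed, yielding two distinct $\leq k$-walks from $u$ to $v$, whereas you route the same mechanism through Lemma \ref{divisibility of orders} (whose proof is exactly that argument) together with the minimality of $p$. Your extra remark that $W$ must in fact be a path is a sound and slightly more careful touch, since the cited lemma is stated for paths.
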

\begin{proof}
	Suppose that there is a vertex $w$ on $W$ such that $\omega (w) > p$.  Then $W$ and $o^p(W)$ are two distinct walks of length $\leq k$ between $u$ and $v$, contradicting $k$-geodecity.
\end{proof}

We now make two definitions that will help us to analyse the structure of the permutation $o$.

\begin{definition}
	The \emph{index} $\omega (G)$ of a $(d,k;+1)$-digraph $G$ is the value of the smallest vertex order in $G$, i.e. $\omega (G) = \min \{ \omega (u): u \in V(G)\} $.
\end{definition}
\begin{definition}
	A $(d,k;+1)$-digraph is \emph{outlier-regular} if its outlier function $o$ is a regular permutation. If each vertex of $G$ has order $\omega $, then $G$ is \emph{$\omega $-outlier-regular}. 
\end{definition}

As $o$ is an automorphism it follows that any power $o^r$ of $o$ is also an automorphism of $G$. In Section \ref{Automorphisms} we classified the possible fixed sets of any non-identity automorphism of $G$. We therefore record the following implication of Corollary \ref{automorphism fixset}.

\begin{corollary}\label{vertex order corollary}
	For any integer $r \geq 2$, the set of vertices of $G$ with order dividing $r$ induces one of the following:
	\begin{itemize}
		\item the entire digraph $G$,
		\item the empty digraph,
		\item a pair of vertices that form a transposition in $o$,
		\item a directed $(k+2)$-cycle, or
		\item a $(d',k;+1)$-digraph, where $3 \leq d' \leq d-1$.
		
	\end{itemize} 	
\end{corollary}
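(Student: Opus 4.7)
The plan is to recognize that the set in question is precisely the fix-set of an automorphism, and then apply Corollary \ref{automorphism fixset} directly. Write $S_r$ for the set of vertices with order dividing $r$. Since $\omega(u) \mid r$ holds if and only if $o^r(u) = u$, we have $S_r = \fix(o^r)$. By Lemma \ref{lem:known results} the outlier function $o$ is an automorphism of $G$, hence so is the power $\phi := o^r$.

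I would then split into two cases according to whether $\phi$ is the identity. If $\phi$ is the identity automorphism, then every vertex has order dividing $r$ and $S_r$ induces all of $G$, giving the first bullet. If $\phi$ is not the identity, Corollary \ref{automorphism fixset} applied with this $\phi$ tells us that $\FIX(\phi)$ is one of: the null digraph, a pair of isolated vertices, a directed $(k+2)$-cycle, or a $(d',k;+1)$-digraph with $3 \leq d' \leq d-1$. The first of these matches the ``empty digraph'' bullet, and the last two match the stated options verbatim.

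The only item that is not immediate is to upgrade the ``pair of isolated vertices'' case to the claim that this pair forms a transposition of $o$. Suppose $\fix(\phi) = \{u,u'\}$ with $u \neq u'$. Since $\phi = o^r$ commutes with $o$, Lemma \ref{outliers of fixvertices are fixvertices} applied to $\phi$ shows that $\fix(\phi)$ is closed under $o$; that is, $o$ restricts to a permutation of the two-element set $\{u,u'\}$. But $o$ is fixed-point-free on $V(G)$ by definition of the outlier function, so the only such permutation is the transposition swapping $u$ and $u'$. Hence $o(u) = u'$ and $o(u') = u$, giving the third bullet exactly.

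The main obstacle is essentially nil, since the structural work has already been done in Section \ref{Automorphisms}; the argument above is merely a bookkeeping exercise that specializes Corollary \ref{automorphism fixset} to the automorphism $o^r$ and refines the two-vertex case using $o$-invariance of the fix-set together with fixed-point-freeness of $o$.
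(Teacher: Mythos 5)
Your proof is correct and follows exactly the route the paper intends: it identifies the set of vertices with order dividing $r$ as $\fix(o^r)$, applies Corollary \ref{automorphism fixset} to the automorphism $o^r$ (splitting off the identity case), and refines the two-vertex case to a transposition of $o$ via Lemma \ref{outliers of fixvertices are fixvertices} and fixed-point-freeness of $o$ — precisely the observations the paper records just before stating the corollary. No gaps.
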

Conjecture \ref{no digraphs with excess one} claims that there are no non-trivial $(d,k;+1)$-digraphs; one approach to proving this conjecture is to study the properties of a minimal counterexample. Let $k \geq 2$ and $k \not = 3,4$. Suppose that there exists a $(d,k;+1)$-digraph with $d \geq 2$ and let $d'$ be the smallest possible value of $d \geq 3$ such that there exists a $(d,k;+1)$-digraph; then we will refer to a $(d',k;+1)$-digraph as a \emph{minimal $(d,k;+1)$-digraph}. For a fixed $k$, Corollary \ref{vertex order corollary} strongly restricts the structure of the outlier automorphism of a minimal $(d,k;+1)$-digraph.  

\begin{corollary}\label{vertex orders for degree 3}
	A minimal $(d,k;+1)$-digraph $G$ satisfies one of the following:
	\begin{itemize}
		\item $G$ is outlier-regular,
		\item the outlier function $o$ of $G$ contains a unique transposition, or
		\item the vertices of $G$ with order $\omega (G)$ form a directed $(k+2)$-cycle.
	\end{itemize} 
	In particular this holds for any $(3,k;+1)$-digraph. 	
\end{corollary}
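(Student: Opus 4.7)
The plan is to apply Corollary \ref{vertex order corollary} to the integer $r = \omega(G)$ and exploit minimality to knock out the undesired alternatives. Let $\omega = \omega(G)$; since the outlier function $o$ is fixed-point-free (as $d(u,o(u))\geq k+1 \geq 3$), we have $\omega \geq 2$, so Corollary \ref{vertex order corollary} is applicable.

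The first step is to identify the set $S$ of vertices whose order divides $\omega$ with the fix-set of the automorphism $\phi = o^{\omega}$. A vertex $u$ lies in $S$ precisely when $o^{\omega}(u) = u$, i.e. $u \in \fix(\phi)$. The second step is the observation that, because $\omega$ is the \emph{minimum} vertex order, no vertex has order strictly smaller than $\omega$, and order $1$ is impossible; hence any divisor of $\omega$ that is actually realised as an order must equal $\omega$. Therefore $S$ is exactly the set of vertices of order $\omega$, and in particular $S$ is nonempty (it contains every vertex realising the minimum).

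Now I would run through the five alternatives provided by Corollary \ref{vertex order corollary} for the induced subdigraph on $S$. If $S = V(G)$ then every vertex has order $\omega$, so $o$ is regular and $G$ is $\omega$-outlier-regular. The empty case is excluded by nonemptiness of $S$. If $S$ consists of a pair of vertices forming a transposition of $o$, then $\omega = 2$ and this pair is the \emph{only} transposition, since $S$ contains all vertices of order $\omega = 2$; this is the second clause. If $S$ induces a directed $(k+2)$-cycle, this is the third clause verbatim. The remaining alternative, that $S$ induces a $(d',k;+1)$-digraph with $3 \leq d' \leq d-1$, is where the hypothesis of minimality is used: such a digraph would contradict the definition of $G$ as a minimal $(d,k;+1)$-digraph (for the fixed value of $k$), so this case is ruled out.

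The last sentence follows because any $(3,k;+1)$-digraph is automatically minimal: Sillasen has shown there is no $(2,k;+1)$-digraph, so $d = 3$ is already the smallest degree $\geq 3$ for which the problem admits a solution, and the final alternative of Corollary \ref{vertex order corollary} becomes vacuous (the range $3 \leq d' \leq 2$ is empty). No step presents a genuine obstacle; the only point requiring care is the reduction from ``order divides $\omega$'' to ``order equals $\omega$'', which is what makes the trichotomy genuinely about the minimum-order vertices rather than about some larger fix-set.
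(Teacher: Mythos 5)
Your proof is correct and follows essentially the same route as the paper: apply Corollary \ref{vertex order corollary} with $r = \omega(G)$, identify $\fix(o^{\omega(G)})$ with the vertices of order exactly $\omega(G)$ (using minimality of $\omega(G)$ among orders and fixed-point-freeness of $o$), and discard the empty and $(d',k;+1)$ alternatives by nonemptiness and minimality of $G$ respectively. The paper's proof is just a terser version of the same argument; your explicit handling of the reduction from ``order divides $\omega$'' to ``order equals $\omega$'' is a detail the paper leaves implicit.
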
   
\begin{proof}
	By Corollary \ref{vertex order corollary} the automorphism $o^{\omega (G)} $ fixes either i) every vertex of $G$, in which case every vertex of $G$ has order $\omega (G)$ and $G$ is outlier-regular, ii) two vertices that are outliers of each other, so that $\omega (G) = 2$ and $o$ contains a unique transposition, or iii) a $(k+2)$-cycle. 
\end{proof}

If a minimal $(d,k;+1)$-digraph is not outlier-regular, then Corollary \ref{vertex orders for degree 3} allows us to deduce the subdigraph induced by the set of vertices with smallest order.
\begin{lemma}\label{omega is small}
	If a minimal $(d,k;+1)$-digraph $G$ is not outlier-regular, then either its outlier function $o$ contains a unique transposition, or else $\omega (G) = k+2$ and the vertices with order $k+2$ induce a directed $(k+2)$-cycle. 	
\end{lemma}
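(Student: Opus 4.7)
My plan is to reduce to the ``$(k+2)$-cycle'' case via Corollary \ref{vertex orders for degree 3} and then pin down precisely how the outlier function acts on that cycle. Since $G$ is not outlier-regular by hypothesis, Corollary \ref{vertex orders for degree 3} forces one of two situations: either $o$ contains a unique transposition, which is already the first conclusion of the lemma and requires nothing further, or else the vertices of $G$ with order $\omega(G)$ induce a directed $(k+2)$-cycle $C$, which I label $u_0 \to u_1 \to \cdots \to u_{k+1} \to u_0$. The remaining task is to show that in this second case one must have $\omega(G) = k+2$.

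First I would observe that $o$ is an automorphism (Lemma \ref{lem:known results}) and therefore preserves vertex orders, so $V(C)$ is $o$-invariant and the restriction $o|_C$ is an automorphism of the directed cycle $C$. Since the automorphism group of a directed $(k+2)$-cycle is cyclic of order $k+2$, generated by rotation, and $o$ is fixed-point-free, $o|_C$ must act as a rotation $u_i \mapsto u_{i+r}$ (indices mod $k+2$) for some $r \in \{1,2,\ldots,k+1\}$. The decisive step is then to identify $r$: the arcs of $C$ supply a directed walk $u_0 \to u_1 \to \cdots \to u_r$ of length $r$ from $u_0$ to $o(u_0) = u_r$, giving $d(u_0,o(u_0)) \le r$; but by the defining property of the outlier $d(u_0,o(u_0)) \ge k+1$, which forces $r = k+1$. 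Since $\gcd(k+2,k+1) = 1$, rotation by $k+1$ on the $k+2$ positions of $C$ produces a single orbit of length $k+2$, so each $u_i$ has order exactly $k+2$ under $o$ and $\omega(G) = k+2$ as required.

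I expect the main obstacle to be the decisive step above: recognising that the arcs of $C$ themselves provide an upper bound on $d(u_0,o(u_0))$, which when combined with the defining lower bound on the outlier distance immediately forces $r$ to take its maximal value. Once this is in place everything reduces to a direct application of earlier results (Corollary \ref{vertex orders for degree 3}, Lemma \ref{lem:known results}) and a routine observation about automorphisms of directed cycles.
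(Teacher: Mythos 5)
Your proposal is correct and follows essentially the same route as the paper: reduce via Corollary \ref{vertex orders for degree 3} to the $(k+2)$-cycle case, then use the fact that $o(u)$ must lie on $C$ together with $d(u,o(u))\geq k+1$ to conclude that $o$ sends each vertex of $C$ to its predecessor, forcing a single $o$-cycle of length $k+2$. Your extra remarks (that $o|_C$ is a rotation and that $\gcd(k+1,k+2)=1$) just make explicit what the paper leaves implicit.
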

\begin{proof}
	Suppose that $G$ is a non-outlier-regular minimal $(d,k;+1)$-digraph with outlier function that does not contain a unique transposition. Then by Corollary \ref{vertex orders for degree 3} the vertices with order equal to the index $\omega (G)$ of $G$ induce a directed $(k+2)$-cycle $C$. For any vertex $u$ in the cycle its outlier $o(u)$ also has order $\omega (G)$, so the outlier of $u$ must be the vertex preceding $u$ on the cycle $C$; it follows that $\omega (G) = k+2$.   
\end{proof}
We will find the following classification of this behaviour convenient.

\begin{definition}
	A minimal $(d,k;+1)$-digraph $G$ such that the vertices of $G$ with order $\omega (G)$ form a directed $(k+2)$-cycle is \emph{Type A}, whereas if the outlier function $o$ of $G$ contains a unique transposition, then $G$ is \emph{Type B}.
\end{definition}
According to this definition, every minimal $(d,k;+1)$-digraph is either Type A, Type B or outlier-regular. Let us now return to the problem of digraphs with degree three and excess one; if any such digraph exists it is minimal.

\begin{lemma}\label{index 3 or more}
	Let $G$ be a $(3,k;+1)$-digraph of Type A. Then $k+2$ divides $\frac{M(3,k)-k-1}{2}$.
\end{lemma}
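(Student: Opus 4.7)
The plan is to pin down the cycle structure of the outlier permutation $o$ completely. By Lemma~\ref{omega is small}, since $G$ is Type A we have $\omega (G) = k+2$ and the vertices of order $k+2$ form a directed $(k+2)$-cycle $C : v_0 \to v_1 \to \dots \to v_{k+1} \to v_0$, with $o(v_i) = v_{i-1}$ (indices mod $k+2$). For each $v_i$, write $N^+(v_i) = \{ v_{i+1}\} \cup N_i^+$, where the set $N_i^+$ consists of the two out-neighbours of $v_i$ outside $V(C)$. Since $o$ is an automorphism sending $v_i$ to $v_{i-1}$, we have $o(N_i^+) = N_{i-1}^+$ and hence $o^{k+2}$ preserves each set $N_i^+$, acting on it as a permutation of two elements.

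The first key step is to show that $o^{k+2}$ swaps the two elements of $N_i^+$ rather than fixing either. If $o^{k+2}$ fixed some $x \in N_i^+$, then $\omega (x)$ would divide $k+2$; combined with $\omega (x) \geq \omega (G) = k+2$ this would force $\omega (x) = k+2$, so by Type A we would have $x \in V(C)$, contradicting $x \in N_i^+$. Hence $o^{k+2}$ interchanges the two elements of $N_i^+$, which means that for any $x \in N_i^+$ we have $o^{k+2}(x) \not = x$ while $o^{2(k+2)}(x) = x$. Thus $\omega (x)$ is a divisor of $2(k+2)$ strictly greater than $k+2$, and since the only such divisor is $2(k+2)$ itself, we get $\omega (x) = 2(k+2)$.

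Next I would apply Corollary~\ref{vertex order corollary} to the value $r = 2(k+2)$. The set of vertices of $G$ with order dividing $2(k+2)$ contains $V(C)$ together with the entire $o$-orbit of $x$ (which has size $2(k+2)$), so has cardinality at least $3(k+2) > 2$; this rules out the empty digraph, a pair of vertices, and a single $(k+2)$-cycle. Since $d = 3$ makes the $(d',k;+1)$-digraph option vacuous (it would require $3 \leq d' \leq 2$), the only remaining possibility is the entire digraph. Therefore every vertex of $G$ has order dividing $2(k+2)$; combined with $\omega (v) \geq k+2$ for all $v$, every vertex order is either $k+2$ or $2(k+2)$.

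Finally, the permutation $o$ consists of $C$ together with $a$ disjoint cycles of length $2(k+2)$ for some $a \geq 1$ (positivity holds because $M(3,k)+1 > k+2$ for $k \geq 2$). Counting vertices gives $M(3,k) + 1 = (k+2)(1 + 2a)$, whence $M(3,k) - k - 1 = 2a(k+2)$ and so $(k+2)$ divides $(M(3,k)-k-1)/2$. I expect the main obstacle to be the first step: cleanly establishing that $o^{k+2}$ acts as a transposition on each $N_i^+$, which crucially exploits the Type A assumption to forbid any non-special vertex from having order dividing $k+2$.
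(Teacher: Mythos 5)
Your proof is correct and follows essentially the same route as the paper: show that $o^{k+2}$ fixes the cycle $C$ but transposes the two remaining out-neighbours of each cycle vertex, deduce via Corollary~\ref{vertex order corollary} that every vertex has order $k+2$ or $2(k+2)$, and count. The only cosmetic difference is that the paper works with a single vertex $u$ of $C$ and its out-neighbourhood $T_1(u)$ rather than all of the sets $N_i^+$, but the argument is identical in substance.
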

\begin{proof}
	By Lemma \ref{omega is small} we have $\omega (G) = k+2$ and the vertices with order $k+2$ induce a $(k+2)$-cycle $C$. Pick a vertex $u$ on $C$ and write $N^+(u) = \{ u_1,u_2,u_3\} $, where $u_1$ also lies on $C$. The automorphism $o^{(k+2)}$ fixes $u$ and $u_1$, but not $u_2$ and $u_3$, so $o^{(k+2)}$ transposes $u_2$ and $u_3$. Thus $o^{2(k+2)}$ fixes every vertex in $T_1(u)$ and by Corollary \ref{vertex order corollary} every vertex of $G$ has order either $k+2$ or $2(k+2)$. If there are $r$ cycles in $o$ with length $2(k+2)$, then we obtain \[ M(3,k)+1=k+2+2r(k+2)\]
	and $k+2$ divides $\frac{M(3,k)-k-1}{2}$. 
\end{proof}

\begin{lemma}\label{2 and 6}
	Let $G$ be a $(3,k;+1)$-digraph of Type B. Then $k \not \equiv 3,5\pmod{6}$. If $k \equiv 0,2 \pmod{6}$, then $G$ contains two vertices of order two, with all other vertices of $G$ having order six. 
\end{lemma}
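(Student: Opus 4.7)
The plan is to first show that the outlier automorphism $o$ has order exactly $6$, and then read off both claims from a short parity and counting argument; the main hurdle is extracting enough fixed vertices of $o^6$ to pin down its fix-set via Corollary \ref{vertex order corollary}. Let $\{u,v\}$ be the unique transposition of $o$, so that $\omega(u)=\omega(v)=2$, and write $N^+(u)=\{u_1,u_2,u_3\}$. Since $d(u,u)\geq k+1\geq 3$ and $d(u,v)\geq k+1\geq 3$, none of the $u_i$ coincides with $u$ or $v$, so each $u_i$ has order strictly greater than $2$.

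Because $o^2(u)=u$, the automorphism $o^2$ permutes $N^+(u)$. No $u_i$ can be fixed by $o^2$, since otherwise $\omega(u_i)$ would divide $2$ and $u_i$ would equal $u$ or $v$; thus $o^2|_{N^+(u)}$ is a fixed-point-free permutation of three elements and so is a $3$-cycle. Consequently $o^6$ fixes $u_1,u_2,u_3$, and because $2 \mid 6$ it also fixes $u$ and $v$, so $\fix(o^6)$ contains at least the five vertices $u,v,u_1,u_2,u_3$, and in $\FIX(o^6)$ the vertex $u$ has out-degree $3$. By Corollary \ref{vertex order corollary}, $\FIX(o^6)$ is empty, a transposition pair, a directed $(k+2)$-cycle or the whole of $G$; the first two are ruled out by the vertex count, and the third because in a directed $(k+2)$-cycle every vertex has out-degree $1$. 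Hence $o^6$ is the identity, and since $\fix(o^2)=\{u,v\}\neq V(G)$ and $o^3(u)=v\neq u$, the order of $o$ is exactly $6$; consequently every vertex of $G$ has order lying in $\{2,3,6\}$.

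Write $a$ and $b$ for the numbers of vertices of order $3$ and $6$ respectively, so $|V(G)|=2+a+b$ with $3 \mid a$ and $6 \mid b$. Applying Corollary \ref{vertex order corollary} to $o^3$: since $o^3$ transposes $u$ and $v$, neither lies in $\fix(o^3)$, so this fix-set must be empty or a directed $(k+2)$-cycle, giving $a=0$ or $a=k+2$ (with $3 \mid k+2$ in the latter case). If $k$ is odd then $3^{k+1}\equiv 1 \pmod 4$, so $|V(G)|=(3^{k+1}+1)/2$ is odd and $a+b=|V(G)|-2$ is odd; since $b$ is even this forces $a$ to be odd, so we must be in the case $a=k+2$ with $3 \mid k+2$, which fails precisely when $k\equiv 3, 5 \pmod 6$, ruling out those residues. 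If instead $k\equiv 0, 2 \pmod 6$ then $3 \nmid k+2$, so the option $a=k+2$ is impossible and $a=0$; the $|V(G)|-2$ vertices other than $u$ and $v$ therefore all have order $6$, as claimed.
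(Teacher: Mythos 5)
Your proof is correct, and it reaches the conclusion by a somewhat leaner route than the paper. The shared skeleton is the same: $o^2$ acts on $N^+(u)$ as a fixed-point-free permutation, hence a $3$-cycle, hence $o^6$ fixes $\{u,v\}\cup N^+(u)$ and therefore all of $G$ by Corollary \ref{vertex order corollary}, so every vertex order lies in $\{2,3,6\}$. Where you diverge is instructive. The paper additionally invokes Theorem \ref{one common ON theorem} to show $N^+(u)\cap N^+(o(u))=\emptyset$ and hence that $o$ contains the full $6$-cycle $(u_1,o(u_1),u_2,o(u_2),u_3,o(u_3))$, pinning the orders of the $u_i$ to exactly $6$; you never need this, since allowing the $u_i$ to have order $3$ costs nothing once you control the order-$3$ vertices globally by applying Corollary \ref{vertex order corollary} to $o^3$ (getting $a=0$ or $a=k+2$ with $3\mid k+2$). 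Your endgame is also packaged differently: instead of the paper's explicit congruences $6\mid(M(3,k)-k-3)$ versus $6\mid(M(3,k)-1)$, you observe that $|V(G)|$ is odd exactly when $k$ is odd, so parity forces $a\neq 0$ and hence $3\mid k+2$, which fails for $k\equiv 3,5\pmod 6$; this is arithmetically equivalent but arguably cleaner. Two small blemishes, neither fatal: the justification ``$d(u,u)\geq k+1$'' is not what you mean (of course $d(u,u)=0$); the correct reason $u\notin N^+(u)$ is that $k$-geodecity forbids loops, while $v\notin N^+(u)$ does follow from $d(u,o(u))\geq k+1$. And when you conclude that $\fix(o^3)$ is empty or a $(k+2)$-cycle, you should explicitly rule out the transposition-pair option from Corollary \ref{vertex order corollary} — immediate either because such a pair would consist of order-$2$ vertices, which cannot have order dividing $3$, or because the transposition of $o$ is unique and is $\{u,v\}\not\subseteq\fix(o^3)$.
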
	
\begin{proof}
	Assume that $G$ is a non-outlier-regular $(3,k;+1)$-digraph with outlier function $o$ containing a unique transposition. Let $u$ and $o(u)$ be the vertices of $G$ with order two, where $N^+(u) = \{ u_1,u_2,u_3\} $. The automorphism $o^2$ fixes $u$, but fixes no vertex in $\{ u_1,u_2,u_3\} $.  We can thus assume that $o^2$ permutes $u_1,u_2,u_3$ in a 3-cycle $(u_1u_2u_3)$. 
	
	By Theorem \ref{one common ON theorem} $u$ and $o(u)$ have at most one common out-neighbour. Suppose that $N^+(u) \cap N^+(o(u)) \not = \emptyset $; we can assume that $u_1$ is the common out-neighbour of $u$ and $o(u)$. However, applying the automorphism $o^2$ to $G$ shows that $o^2(u_1)$ is a common out-neighbour of $u$ and $o(u)$ and, since $G$ is Type B we have $o^2(u_1) \not = u_1$, thereby violating Theorem \ref{one common ON theorem}. It follows that $o$ contains the 6-cycle $(u_1,o(u_1),u_2,o(u_2),u_3,o(u_3))$. 
	
	Therefore $\{ u\} \cup N^+(u) \subseteq \fix(o^6)$ and hence $o^6$ fixes every vertex of $G$, so that the order of every vertex apart from $u$ and $o(u)$ is either $3$ or $6$. 
	
	Suppose that there is a vertex with order $3$. Then $o^3$ fixes a $(k+2)$-cycle. If there are $r$ cycles in $o$ of length $6$, then \[ M(3,k)+1  = 2+(k+2)+6r.\] Hence $6|(M(3,k)-k-3)$. This implies that $k \equiv 1 \pmod{3}$.
	
	On the other hand, suppose that all vertices of $G$ have order six, with the exception of the two vertices with order two. Then $6|(M(3,k)-1)$, which implies that $k$ is even.  
	
	Thus if $k \equiv 3 \pmod{6}$ or $k \equiv 5 \pmod{6}$, then no such digraph can exist.	   
\end{proof}

\begin{corollary}\label{no 3,k;+1 digraphs}
	If $k \geq 2$ is such that 
	\begin{itemize}
		\item $k \equiv 3$ or $5 \pmod{6}$,
		\item $k+2$ does not divide $\frac{M(3,k)-k-1}{2}$, and
		\item $M(3,k) + 1$ is prime,
	\end{itemize}	
	then there is no $(3,k;+1)$-digraph.
\end{corollary}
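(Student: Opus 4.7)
The plan is to combine the structural trichotomy of Corollary \ref{vertex orders for degree 3} with the vertex-transitivity machinery of Section \ref{section:VT}. Suppose for contradiction that $G$ is a $(3,k;+1)$-digraph satisfying the three hypotheses. Since Sillasen showed that no $(2,k;+1)$-digraph exists, $G$ is automatically a minimal $(d,k;+1)$-digraph, so Corollary \ref{vertex orders for degree 3} places $G$ into exactly one of three categories: Type A, Type B, or outlier-regular.

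The first two hypotheses rule out the non-outlier-regular cases. If $G$ were Type A, Lemma \ref{index 3 or more} would give $(k+2) \mid \frac{M(3,k)-k-1}{2}$, contradicting the second hypothesis. If $G$ were Type B, Lemma \ref{2 and 6} would force $k \not\equiv 3, 5 \pmod{6}$, contradicting the first. Hence $G$ must be outlier-regular, so every vertex of $G$ has the same order $\omega = \omega(G)$. Because each cycle of $o$ has length $\omega$, we have $\omega \mid |V(G)| = M(3,k)+1$; moreover $o$ is fixed-point-free (as $d(u,o(u)) \geq k+1 \geq 3$), so $\omega \geq 2$. The third hypothesis that $M(3,k)+1$ is prime then forces $\omega = M(3,k)+1$.

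Thus $o$ is a single cycle of length $M(3,k)+1$ permuting all of $V(G)$, and the cyclic subgroup $\langle o \rangle \leq \aut(G)$ acts transitively on $V(G)$, making $G$ vertex-transitive. By Lemma \ref{all vertices type I if VT} every vertex of $G$ is Type I. But the first hypothesis entails that $k$ is odd, and since $d = 3$ is also odd, Theorem \ref{general conditions for Type II vertex}(i) asserts the existence of a Type II vertex in $G$, which is the desired contradiction.

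The argument is essentially a routing exercise that funnels the three hypotheses into incompatible structural conclusions, and no calculation beyond what has already been done in the cited lemmas is required. The main conceptual step, such as it is, is recognising that primality of $M(3,k)+1$ is precisely the ingredient that collapses ``outlier-regular'' into a single $o$-orbit, converting an abstract regularity statement into honest vertex-transitivity so that the parity dichotomy of Theorem \ref{general conditions for Type II vertex}(i) can be invoked.
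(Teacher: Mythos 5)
Your proposal is correct, and it coincides with the paper's proof up to and including the deduction that $G$ is vertex-transitive: both arguments rule out Type A via Lemma \ref{index 3 or more} and Type B via Lemma \ref{2 and 6}, then use primality of $M(3,k)+1$ to force the outlier permutation into a single cycle of full length, whence $\langle o \rangle$ acts transitively. Where you diverge is in the final contradiction. The paper invokes the external fact that a vertex-transitive digraph of prime order is a circulant digraph and that circulants of out-degree $\geq 2$ are never $k$-geodetic for $k \geq 2$ --- a clean one-line finish, but one that imports a group-theoretic result (a transitive group of prime degree contains a full cycle) which the paper does not justify. You instead stay entirely within the paper's own machinery: vertex-transitivity forces all vertices to be Type I by Lemma \ref{all vertices type I if VT}, while Theorem \ref{general conditions for Type II vertex}(i) guarantees a Type II vertex because $d=3$ and $k$ (being $\equiv 3$ or $5 \pmod 6$) are both odd. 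This is a valid and arguably more self-contained route; the trade-off is that your ending genuinely uses the parity of $k$ supplied by the first hypothesis, whereas the paper's circulant argument would dispose of the vertex-transitive case for any $k \geq 2$, so the two endings are not interchangeable if one wanted to weaken the congruence condition while keeping primality.
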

\begin{proof}
	Assume that $G$ is a $(3,k;+1)$-digraph such that $k$ satisfies each of these conditions. By Lemmas \ref{index 3 or more} and \ref{2 and 6}, $G$ is neither Type A nor Type B and hence must be outlier-regular. As the order of $G$ is prime, it follows that its outlier function $o$ consists of a single cycle of length $M(3,k)+1$; thus $G$ is vertex-transitive. However, any vertex-transitive digraph with prime order is a circulant digraph, which is not $k$-geodetic for $k \geq 2$. It follows that there is no $(3,k;+1)$-digraph for such $k$. 		
\end{proof}
The first $k$ for which Corollary \ref{no 3,k;+1 digraphs} applies are $k = 3, 15$ and $63$. This provides an independent proof of the non-existence of $(3,3;+1)$-digraphs \cite{MilMirSil}, as well as ruling out the existence of $(3,k;+1)$-digraphs for some larger $k$. 

\begin{corollary}
	There are no $(3,3;+1)$, $(3,15;+1)$- or $(3,63;+1)$-digraphs.
\end{corollary}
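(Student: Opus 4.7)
The plan is to apply Corollary \ref{no 3,k;+1 digraphs} by verifying, for each of $k = 3, 15, 63$, that all three hypotheses hold. Since $3 \equiv 15 \equiv 63 \equiv 3 \pmod 6$, the first condition is immediate.

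For the second and third conditions I would compute $M(3,k) = (3^{k+1}-1)/2$ in each case. For $k=3$ we have $M(3,3) = 40$, so $M(3,3)+1 = 41$ is prime and $\tfrac{M(3,3)-k-1}{2} = 18$, which is not divisible by $k+2 = 5$. For $k=15$ one computes $M(3,15) = (3^{16}-1)/2 = 21523360$, and I would verify that $M(3,15)+1 = 21523361$ is prime (a quick primality check) and that $k+2 = 17$ does not divide $\tfrac{M(3,15)-16}{2}$. The analogous computation for $k = 63$ with $M(3,63) = (3^{64}-1)/2$ requires checking primality of $M(3,63)+1$ and non-divisibility by $k+2 = 65$; these are finite number-theoretic checks.

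Once the three hypotheses are verified, Corollary \ref{no 3,k;+1 digraphs} immediately delivers the non-existence of $(3,k;+1)$-digraphs for each of these values. Since no further digraph-theoretic argument is needed beyond what is already contained in that corollary, the only obstacle is arithmetic: confirming the primality of $M(3,k)+1$ for $k = 15$ and especially $k = 63$, where the integer involved has nearly thirty decimal digits. This can be handled by a standard primality routine, and once done the statement follows.
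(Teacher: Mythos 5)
Your proposal is correct and is exactly the paper's argument: the corollary is obtained by checking, for $k = 3, 15, 63$, that $k \equiv 3 \pmod 6$, that $k+2 \nmid \frac{M(3,k)-k-1}{2}$, and that $M(3,k)+1$ is prime, then invoking the preceding corollary. The arithmetic you exhibit for $k=3$ and $k=15$ is right, and deferring the remaining primality/divisibility checks to a routine computation is all the paper does as well.
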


\section{Spectral results}

Now that we have more information about the permutation structure of $o$, we can apply some more powerful spectral results developed in \cite{MilMirSil}. If $A$ is the adjacency matrix of a $(d,k;+1)$-digraph $G$ with order $n$, $J$ is the $n \times n$ all-one matrix and $P$ is the permutation matrix associated with the permutation $o$, then counting the paths of length $\leq k$ we have that 
\begin{equation}\label{Counting paths for excess one}
	I+A+A^2+\dots +A^k = J-P.
\end{equation} 
We will now exploit the connection in Equation \ref{Counting paths for excess one} between the permutation structure of the outlier function $o$ of a $(d,k;+1)$-digraph $G$ and the spectrum of $G$. We will use the following concise description of the permutation structure of the outlier function $o$ from \cite{MilMirSil}.
\begin{definition}
	For any $(d,k;+1)$-digraph $G$ and $1 \leq j \leq M(d,k)+1$, the number of cycles of length $j$ in the permutation $o$ will be denoted by $m_j$. The $(M(d,k)+1)$-tuple $(m_1,m_2,\dots,m_{M(d,k)+1})$ is the \emph{permutation vector} of $G$. For $1 \leq j \leq M(d,k)+1$ we define
	
	\begin{itemize}
		\item $m'(j)$ is the number of odd cycles in the permutation $o$ with length divisible by $j$,
		\item $m''(j)$ is the number of even cycles in the permutation $o$ with length divisible by $j$, and
		\item $m(j) = m'(j)+m''(j)$ is the total number of cycles in $o$ with length divisible by $j$.
	\end{itemize} 
\end{definition}
Note that as the outlier function is fixed-point-free we always have $m_1 = 0$. We will also need the following family of polynomials derived from the cyclotomic polynomials.

\begin{definition}
	For $n,k \geq 1$ the polynomial $F_{n,k}(x)$ is defined by \[ F_{n,k}(x) = \Phi _n(1+x+x^2+\dots +x^k),\]
	where $\Phi _n(x)$ is the $n$-th cyclotomic polynomial.
	
\end{definition} 
In \cite{MilMirSil} Miller et al. derive the following relation between the characteristic polynomial of $J-P$ and the permutation vector of $G$.

\begin{lemma}\cite{MilMirSil}\label{excess one spectrum lemma}
	The characteristic polynomial of $J-P$ is 
	\[ (x-M(d,k))(x+1)^{-1}\prod _{j \geq 2, j \mbox{ even }}(x^j-1)^{m_j}\prod _{j \geq 3, j \mbox{ odd }}(x^j+1)^{m_j}. \] 
\end{lemma}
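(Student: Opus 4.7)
The plan is to diagonalise $J-P$ using the orthogonal decomposition $\mathbb{R}^n = \langle \mathbf{1}\rangle \oplus \mathbf{1}^\perp$, where $n = M(d,k)+1$ and $\mathbf{1}$ is the all-ones vector, and then to read off the characteristic polynomial from each piece. Since $o$ is a permutation we have $P\mathbf{1} = \mathbf{1}$ and $P^T\mathbf{1} = \mathbf{1}$, and clearly $J\mathbf{1} = n\mathbf{1}$. Hence $(J-P)\mathbf{1} = (n-1)\mathbf{1} = M(d,k)\mathbf{1}$, contributing the factor $(x-M(d,k))$ to $\chi_{J-P}(x)$. The identity $P^T\mathbf{1} = \mathbf{1}$ shows that $\mathbf{1}^\perp$ is $P$-invariant, and it is also $J$-invariant, because $Jv = (\mathbf{1}^T v)\mathbf{1} = 0$ for $v \in \mathbf{1}^\perp$. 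Consequently $J-P$ restricts to $-P$ on $\mathbf{1}^\perp$, and the whole problem reduces to computing $\chi_{-P\vert_{\mathbf{1}^\perp}}(x)$.

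Next I would determine the spectrum of $P$ from the cycle decomposition of $o$. A cycle of length $j$ contributes a cyclic block whose eigenvalues are exactly the $j$-th roots of unity, so $\chi_P(x) = \prod_{j \geq 2}(x^j - 1)^{m_j}$ (recall $m_1 = 0$ since $o$ is fixed-point-free). Passing from $P$ to $-P$ negates every eigenvalue, and a clean parity dichotomy appears. If $j$ is even, then multiplication by $-1$ permutes the set of $j$-th roots of unity among themselves, so the block still contributes $(x^j - 1)^{m_j}$. If $j$ is odd, then $(-\zeta)^j = -1$ for each $j$-th root of unity $\zeta$, so the negated eigenvalues are exactly the roots of $x^j + 1$, and the block contributes $(x^j + 1)^{m_j}$. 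This yields
\[ \chi_{-P}(x) \;=\; \prod_{j \geq 2,\, j \text{ even}}(x^j - 1)^{m_j} \prod_{j \geq 3,\, j \text{ odd}}(x^j + 1)^{m_j}. \]

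Finally I would restrict to $\mathbf{1}^\perp$: since $-P\mathbf{1} = -\mathbf{1}$, the eigenvector $\mathbf{1}$ is responsible for exactly one copy of the root $-1$, and excising it corresponds to dividing by $(x+1)$. Multiplying by the factor $(x-M(d,k))$ from the $\langle \mathbf{1}\rangle$ summand then gives precisely the stated formula. The only delicate point is the parity bookkeeping when negating the eigenvalues of each cyclic block, which produces the split into $(x^j-1)$-factors for even $j$ and $(x^j+1)$-factors for odd $j$; beyond that I do not foresee any serious obstacle, since everything else is a straightforward orthogonal decomposition argument. A short consistency check is that the multiplicity of $-1$ in $\chi_{-P}(x)$ is $\sum_{j\geq 2} m_j$, namely the total number of cycles of $o$, so dividing by one factor of $(x+1)$ correctly matches the removal of the single $\mathbf{1}$-direction.
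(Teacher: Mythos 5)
Your argument is correct and complete: the decomposition $\mathbb{R}^n=\langle\mathbf{1}\rangle\oplus\mathbf{1}^\perp$ is indeed invariant for both $J-P$ and $-P$, the parity bookkeeping $(-1)^n\chi_P(-x)=\prod_{j\text{ even}}(x^j-1)^{m_j}\prod_{j\text{ odd}}(x^j+1)^{m_j}$ checks out because $(-1)^n=\prod_{j\text{ odd}}(-1)^{m_j}$, and removing the $\mathbf{1}$-direction accounts for exactly one factor of $(x+1)$. Note that the paper itself does not prove this lemma but imports it from the cited reference of Miller, Miret and Sillasen; your proof is the standard argument one would expect there, so there is nothing to fault.
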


\begin{theorem}\label{2-outlier regular}
	There are no $2$-outlier-regular $(d,k;+1)$-digraphs.
\end{theorem}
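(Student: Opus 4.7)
Assume for contradiction that $G$ is a $2$-outlier-regular $(d,k;+1)$-digraph; then $o$ is a fixed-point-free involution, so the permutation vector has $m_2 = n/2$ and $m_j = 0$ for $j \neq 2$. Applying Lemma \ref{excess one spectrum lemma}, the characteristic polynomial of $J - P$ is $(x-M(d,k))(x-1)^{m_2}(x+1)^{m_2-1}$. Let $V_- = \ker(P+I)$ and $V_+ = \ker(P-I)$: since $o$ is an automorphism $A$ commutes with $P$ and preserves both eigenspaces, and since $V_- \subseteq \mathbf{1}^{\perp}$ we have $J|_{V_-}=0$, hence $p(A)|_{V_-} = -P|_{V_-} = I$. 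The strategy is to upgrade this to $A|_{V_-} = 0$, which will force every pair $\{u, o(u)\}$ to be a pair of twins in $G$ and produce an automorphism forbidden by Corollary \ref{automorphism fixset}.

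The key step is a trace calculation. Multiplying $p(A) = J - P$ on the right by $A - I$ and using $JA = dJ$, $PA = AP$, and $P^2 = I$ one finds $A^{k+1} = I + (d-1)J + P - AP$, and induction on $j$ gives
\[
A^{k+j} \;=\; A^{j-1} + (d-1)d^{j-1} J + A^{j-1} P - A^j P, \qquad 1 \leq j \leq k.
\]
By the Moore tree argument, $(A^r)_{u,u} = 0 = (A^r)_{u, o(u)}$ for $1 \leq r \leq k$, so $\mathrm{tr}(A^r) = \mathrm{tr}(A^r P) = 0$ in that range. Taking traces yields $\mathrm{tr}(A^{k+1}) = dn$ and $\mathrm{tr}(A^{k+j}) = (d-1) d^{j-1} n$ for $2 \leq j \leq k$.

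I would then decompose each trace as $\mathrm{tr}(A^{k+j}) = d^{k+j} + \sum_{\lambda \in V_-}\lambda^{k+j} + \sum_{\lambda \in V_+ \cap \mathbf{1}^\perp}\lambda^{k+j}$, using $\lambda^k = 1$ for non-zero $\lambda \in V_-$ and $\lambda^{k+1} = 2 - \lambda$ for $\lambda \in V_+ \cap \mathbf{1}^\perp$. Together with the identity $d^{k+1} + d - 2 = (d-1) n$, an induction on $j$ then forces the power sums $s_j^- := \sum_{\lambda \in V_-} \lambda^j$ to vanish for $1 \leq j \leq k$. Writing the multiplicities of the eigenvalues of $A|_{V_-}$ as a function $b : \mathbb{Z}/k \to \mathbb{Z}_{\geq 0}$ with $b(0) = 0$, the vanishing of $s_1^-, \ldots, s_k^-$ is exactly the vanishing of the discrete Fourier transform $\hat b$ at $r = 1, \ldots, k$; since $\hat b(k) = \hat b(0) = \sum_j b(j)$, this forces $\hat b \equiv 0$ on $\mathbb{Z}/k$, hence $b \equiv 0$. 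As the minimal polynomial of $A|_{V_-}$ divides the squarefree polynomial $p(x) - 1 = x(x^{k-1}+\dots+1)$, $A|_{V_-}$ is diagonalisable, so $A|_{V_-} = 0$.

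Combinatorially, $A(e_u - e_{o(u)}) = 0$ means $N^-(u) = N^-(o(u))$, and the Duality Principle (Lemma \ref{Duality Principle}) gives $N^+(u) = N^+(o(u))$ as well, so $u$ and $o(u)$ are twins in $G$. The transposition $\tau$ that swaps them is therefore a non-identity automorphism of $G$ fixing exactly $n - 2 = M(d,k)-1$ vertices. By Corollary \ref{automorphism fixset}, $|\fix(\tau)|$ must be $0$, $2$, $k+2$, or $M(d',k)+1$ for some $3 \leq d' \leq d-1$; a short check shows none of these equals $M(d,k)-1$ when $d, k \geq 2$ (the tight subcase $M(d,k)-M(d',k) = 2$ reduces for $k=2$ to $(d-d')(d+d'+1) = 2$, impossible for $d > d' \geq 3$, and the gap grows with $k$). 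The main technical obstacle is the trace-to-power-sum induction; once the matrix identity $A^{k+1} = I + (d-1)J + P - AP$ is in hand the remaining steps are essentially bookkeeping.
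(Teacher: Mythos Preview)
Your proof is correct, but it takes a genuinely different route from the paper's.

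Both arguments begin with Lemma~\ref{excess one spectrum lemma} and the trace identities $\mathrm{tr}(A^r)=0$ for $1\le r\le k$; from there they diverge. The paper works on the $(-1)$-eigenspace of $J-P$ (your $V_+\cap\mathbf 1^\perp$): using only $\mathrm{tr}(A^{k+1})=dn$ it obtains $M_1=-d$, then invokes Gimbert's result that $2+x+\cdots+x^k$ is irreducible over $\mathbb Q$ to force the $\mu_i$ to occur with a common multiplicity $(n/2-1)/k$, and finishes with the numerical impossibility $2k=1+d+\cdots+d^{k-1}$. You instead work on the $(+1)$-eigenspace of $J-P$ (your $V_-$): pushing the trace computation out to $\mathrm{tr}(A^{k+j})$ for every $1\le j\le k$ via the clean matrix identity $A^{k+1}=I+(d-1)J+P-AP$, you kill all the power sums $s^-_1,\dots,s^-_k$, deduce $A|_{V_-}=0$ from diagonalisability, and then finish combinatorially by exhibiting a single twin-swap automorphism with $M(d,k)-1$ fixed points and appealing to Corollary~\ref{automorphism fixset}. (The induction you allude to does go through: one checks simultaneously that $s_j^-=0$ and $M_j=-d^j$ for $1\le j\le k$, using $(d-1)n=d^{k+1}+d-2$ at each step.)

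What each approach buys: the paper's argument is shorter and ends with a one-line arithmetic contradiction, but it imports the nontrivial irreducibility lemma from \cite{Gim2}. Your argument avoids that external input and yields the pleasant structural byproduct that $2$-outlier-regularity would force every pair $\{u,o(u)\}$ to be true twins, at the cost of tracking $k$ trace identities rather than one and leaning on the fix-set machinery of Section~\ref{Automorphisms}. One small simplification: once you have $N^-(v)=N^-(o(v))$ for \emph{all} $v$, the equality $N^+(u)=N^+(o(u))$ follows directly from $o$ being an automorphism (if $u\to v$ then $o(u)\to o(v)$, and $o(u)\in N^-(o(v))=N^-(v)$), so the appeal to the Duality Principle at that point is not strictly needed.
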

\begin{proof}
	Assume that $G$ is a $2$-outlier-regular $(d,k;+1)$-digraph with order $n = M(d,k)+1$, i.e. the outlier function $o$ of $G$ contains only transpositions. Thus $m_2 = \frac{n}{2}$ and $m_i = 0$ for $i \not = 2$. Therefore by Lemma \ref{excess one spectrum lemma} the characteristic polynomial of $J-P$ is \[ (x-M(d,k))(x+1)^{-1}(x^2-1)^{n/2} = (x-M(d,k))(x-1)^{\frac{n}{2}}(x+1)^{\frac{n}{2}-1}. \]	
	It follows from Equation \ref{Counting paths for excess one} that the spectrum of $G$ consists of
	\begin{itemize}
		\item one eigenvalue $d$,
		\item $\frac{n}{2}$ eigenvalues $\lambda _i$, $1 \leq i\leq \frac{n}{2}$, such that $1+\lambda_i+\lambda_i^2+\dots +\lambda _i^k = 1$ for $1 \leq i \leq \frac{n}{2}$, and
		\item $\frac{n}{2}-1$ eigenvalues $\mu _i$ such that for $1 \leq i \leq \frac{n}{2}-1$ we have $1+\mu_i+\mu_i^2+\dots + \mu_i^k = -1$.
	\end{itemize} 
	For any integer $r \geq 0$ we define \[ \Lambda _r = \sum_{i=1}^{\frac{n}{2}}\lambda _i^r\]
	and  \[ M_r = \sum_{i=1}^{\frac{n}{2}-1}\mu _i^r.\]
	As for all vertices $u$ of $G$ we have $o^-(u) = o(u)$, the reasoning of Theorem \ref{divisibility for VT} shows that each vertex of $G$ is contained in $d$ directed $(k+1)$-cycles. By $k$-geodecity, any closed $(k+1)$-walk must be a cycle, so it follows by counting walks of length $\leq k+1$ that
	\begin{equation}\label{Trace r}
		\trace(A^r) = d^r +\Lambda _r +M_r = 0, 1 \leq r \leq k,
	\end{equation}
	and
	\begin{equation}\label{Trace k+1}
		\trace(A^{k+1}) = d^{k+1} +\Lambda _{k+1} +M_{k+1} = dn.
	\end{equation}
	
	Each eigenvalue $\lambda _i$ satisfies $1+\lambda_i + \lambda _i^2+\dots +\lambda _i^k = 1$; summing this geometric series and rearranging we obtain $\lambda _i(\lambda_i^k-1) = 0$, so each $\lambda _i$ is either zero or a $k$-th root of unity. Hence for $1 \leq r \leq k$ we have $\Lambda _{k+r} = \Lambda _r$. Similarly each eigenvalue $\mu _i$ satisfies $\mu_i ^{k+1}=2-\mu_i$, so that for $1 \leq r \leq k$ we have $\mu_i ^{k+r}=2\mu_i^{r-1}-\mu_i^r$. Thus for $1 \leq r \leq k$ the numbers $M_{k+r}$ satisfy $M_{k+r} = -M_r+2M_{r-1}$. 
	
	In particular $M_{k+1} = -M_1+2M_0 = -M_1+n-2$ and $\Lambda _{k+1} = \Lambda _1$. Therefore by Equation \ref{Trace k+1} we have
	\[ \Lambda _1 - M_1 = dn-d^{k+1}-n+2=d.\]
	Subtracting this from Equation \ref{Trace r} with $r = 1$ yields
	\[ M_1=-d.\]
	For any prime $p$ the polynomial $p+x+x^2+\dots +x^k$ is irreducible over $\mathbb{Q}$ \cite{Gim2}. As each $\mu _i$ is a solution of $2+x+x^2+\dots+x^k = 0$, it follows that the roots of $2+x+x^2+\dots+x^k$ must appear with equal multiplicity among the $\mu _i$. Therefore $k$ must divide $\frac{n}{2}-1$. The sum of the roots of $2+x+x^2+\dots+x^k$ is $-1$; therefore it follows that
	\[ -d = M_1 = -\frac{1}{k}\left[ \frac{n}{2}-1\right].\] 
	Rearranging, we obtain
	\[ n = 2+d+d^2+d^3+\dots +d^k = 2kd+2, \]
	or, simplifying,
	\[ 2k = 1+d+d^2+\dots +d^{k-1},\]
	which is impossible for $d,k \geq 2$.
\end{proof}
In particular, it follows from Lemma \ref{arc-transitive lemma} and Theorem \ref{2-outlier regular} that no $(d,k;+1)$-digraph with $d,k \geq 2$ can be arc-transitive.

\section{$2$-geodetic digraphs with excess one}\label{(d,2;+1)-digraphs}

In \cite{MilMirSil} the authors use spectral techniques to show that there are no 2-geodetic digraphs with excess one and degree $d \geq 8$. Sillasen's first paper on the subject \cite{Sil} proves that there are no $(2,2;+1)$-digraphs. Theorem \ref{no (3,2;+1)-digraphs} of the present work further showed that there are no $(3,2;+1)$-digraphs. This leaves open the existence of $(d,2;+1)$-digraphs for $d = 4,5,6$ and $7$. We will see that no $(d,2;+1)$-digraphs exist for these values of $d$. We first rule out the existence of outlier-regular $(d,2;+1)$-digraphs, then use an inductive approach to deal with the remaining cases.

Accordingly we shall now assume that any $(d,2;+1)$-digraph is outlier-regular. Then the index $\omega (G)$ of the digraph $G$ must be a non-unit divisor of the order $2+d+d^2$ of the $(d,2;+1)$-digraph. These divisors are displayed in Table \ref{tab:k = 2 excess 1}.

\begin{table}[h]\centering
	\begin{tabular}{|l|l|l|}
		\hline
		$d$ & $Order$ & Divisors $> 1$ \\
		\hline
		4 & 22 & \textcolor{red}{2},\textcolor{green}{11},\textcolor{blue}{22} \\
		5 & 32 & \textcolor{red}{2},\textcolor{orange}{4},\textcolor{orange}{8},\textcolor{orange}{16},\textcolor{blue}{32}\\
		6 & 44 & \textcolor{red}{2},4,\textcolor{green}{11},\textcolor{pink}{22},\textcolor{pink}{44} \\
		7 & 58 & \textcolor{red}{2},\textcolor{green}{29},\textcolor{blue}{58}\\
		\hline 
		
		\hline
	\end{tabular}
	\caption{Nontrivial divisors of the orders of the $(d,2;+1)$-graphs}
	\label{tab:k = 2 excess 1}
\end{table}

Theorem \ref{2-outlier regular} shows that there are no 2-outlier-regular digraphs, so we have already dealt with the divisors in red. Furthermore, if the index $\omega (G)$ of the digraph is equal to the order $2+d+d^2$ of the digraph $G$, then $G$ is vertex-transitive and by Corollary \ref{divisibility cor} the size $2d+d^2+d^3$ of $G$ must be divisible by $3$; this precludes the existence of outlier-regular digraphs with the divisors written in blue in Table \ref{2-outlier regular}. For the remaining possible structures of the outlier function we will need the exact factorisation of the characteristic polynomial of a $(d,2;+1)$-digraph from \cite{MilMirSil}.

\begin{lemma}\cite{MilMirSil}\label{k = 2 factorisation}
	The characteristic polynomial of a $(d,2;+1)$-digraph factorises in $\mathbb{Q} [x]$ as 
\[	\begin{aligned} (x-d)x^{a_1}(x+1)^{a_2}(x^2+x+2)^{\frac{m(2)+m'(1)-1}{2}}(x^2+1)^{m(4)} \\ \times \prod_{j \geq 3, j \mbox{ odd }}F_{j,2}(x)^{\frac{m''(j)}{2}}F_{2j,2}(x)^{\frac{m'(j)}{2}}\prod_{j \geq 6, j \mbox{ even }}F_{j,2}(x)^{\frac{m(j)}{2}},\end{aligned} \]
	where $a_1$ and $a_2$ are non-negative integers that satisfy the simultaneous equations 
	\begin{equation}\label{Sil simultaneous}
		a_1+a_2 = m''(1) \mbox{ and } d^2-d+1=a_1-a_2+2m(4).
	\end{equation}
\end{lemma}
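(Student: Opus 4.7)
The plan is to exploit the matrix identity $I+A+A^2=J-P$ from Equation \ref{Counting paths for excess one} at $k=2$, which relates the characteristic polynomials of $A$ and $J-P$ via the quadratic substitution $y=1+x+x^2$. Writing $\chi_M$ for the characteristic polynomial of a matrix $M$, each eigenvalue $\lambda$ of $A$ produces the eigenvalue $\mu=1+\lambda+\lambda^2$ of $J-P$, and since the quadratic $\lambda^2+\lambda+(1-\mu)=0$ has roots $\lambda$ and $-1-\lambda$ one obtains the polynomial identity
\[
\chi_{J-P}(1+x+x^2)=\chi_A(x)\cdot \chi_A(-1-x).
\]
The right-hand side is invariant under the involution $\iota:x\mapsto -1-x$, so the calculation reduces to factoring the left-hand side in $\mathbb{Q}[x]$ and distributing its irreducible pieces between $\chi_A(x)$ and its $\iota$-image in an $\iota$-compatible way.

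First I would apply Lemma \ref{excess one spectrum lemma} to write $\chi_{J-P}(y)$ as a product of cyclotomic polynomials $\Phi_n(y)^{c_n}$. Using the standard factorizations $y^j-1=\prod_{n\mid j}\Phi_n(y)$ and $y^j+1=\prod_{n\mid 2j,\,n\nmid j}\Phi_n(y)$ together with the prefactor $(y-M(d,2))(y+1)^{-1}$, a short divisibility calculation determines each multiplicity $c_n$ in terms of the permutation data: in particular $c_1=m''(1)$, $c_2=m(2)+m'(1)-1$ and $c_4=m(4)$, with the remaining $c_n$ built from $m''(n)$, $m'(n)$ and $m(n)$ according to the $2$-adic valuation of $n$. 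Substituting $y=1+x+x^2$ replaces each $\Phi_n(y)^{c_n}$ by $F_{n,2}(x)^{c_n}$, producing an explicit cyclotomic factorization of the right-hand side.

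Next I would decompose each $F_{n,2}(x)$ into its $\mathbb{Q}$-irreducible factors and match them to $\chi_A$ via the $\iota$-symmetry. For every $n\neq 1,4$, classical arguments show that $F_{n,2}(x)$ is $\mathbb{Q}$-irreducible, so $\iota$-invariance forces it to appear with multiplicity exactly $c_n/2$ in $\chi_A$; after regrouping this gives the blocks $F_{j,2}^{m''(j)/2}$, $F_{2j,2}^{m'(j)/2}$, $F_{j,2}^{m(j)/2}$ and $(x^2+x+2)^{(m(2)+m'(1)-1)/2}$ in the statement. The remaining cases are $F_{1,2}(x)=x(x+1)$ and $F_{4,2}(x)=(x^2+1)(x^2+2x+2)$, each of which splits into two $\mathbb{Q}$-irreducible pieces that are swapped by $\iota$; one piece of each pair enters $\chi_A$, producing the blocks $x^{a_1}(x+1)^{a_2}$ and $(x^2+1)^{m(4)}$, while the companions $(x+1)^{a_1}x^{a_2}$ and $(x^2+2x+2)^{m(4)}$ go into $\chi_A(-1-x)$. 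The $\iota$-balance at $n=1$ immediately yields the first equation $a_1+a_2=m''(1)$ of Equation \ref{Sil simultaneous}.

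The second equation $d^2-d+1=a_1-a_2+2m(4)$ would follow from trace identities. The diagonal entries of $I+A+A^2=J-P$ satisfy $(I+A+A^2)_{uu}=(J-P)_{uu}=1$ for every vertex $u$ since $o$ is fixed-point-free, so $(A^2)_{uu}=0$ and hence $\trace(A^2)=0$; together with the loopless condition $\trace(A)=0$ (forced by $2$-geodecity) these give two linear constraints on the root-sums. Evaluating $\sum\lambda$ and $\sum\lambda^2$ directly from the proposed factorization using the pairing $\lambda\leftrightarrow -1-\lambda$ together with Newton's identities on each $F_{n,2}$ block, the two trace equations collapse to linear conditions on $a_1,a_2$; subtraction eliminates all the permutation parameters coming from the other $F_{n,2}$ blocks and leaves the stated relation. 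The main technical obstacle is the $\mathbb{Q}$-irreducibility of $F_{n,2}(x)$ for $n\neq 1,4$, which requires care for small $n$ and rests on the irreducibility of $p+y+y^2$ for prime $p$ cited in the proof of Theorem \ref{2-outlier regular}; once that is in hand, the remainder is routine cyclotomic bookkeeping.
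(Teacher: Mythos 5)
This lemma is imported from \cite{MilMirSil}; the paper under review states it without proof, so there is no in-paper argument to compare against and I can only assess your proposal on its own terms. Your overall strategy is the standard and essentially correct one: the identity $\chi_{J-P}(1+x+x^2)=\chi_A(x)\,\chi_A(-1-x)$ holds (the sign is harmless because $n=d^2+d+2$ is even), your cyclotomic bookkeeping giving $c_1=m''(1)$, $c_2=m(2)+m'(1)-1$, $c_4=m(4)$ and the remaining multiplicities is right, and the $\iota$-symmetry argument correctly forces each \emph{irreducible} $\iota$-invariant factor $F_{n,2}$ ($n\neq 1,4$) to occur in $\chi_A$ with exponent $c_n/2$. (One attribution quibble: the irreducibility of $F_{n,2}$ for $n\neq 1,4$ is a theorem of Gimbert about $\Phi_n(1+x+\cdots+x^k)$, not a consequence of the irreducibility of $p+x+x^2$ for prime $p$; but citing it is legitimate.)

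There is, however, a genuine gap at the two reducible factors. For $n=4$ your symmetry argument only yields that if $(x^2+1)^{b_1}(x^2+2x+2)^{b_2}$ exactly divides $\chi_A(x)$ then $b_1+b_2=m(4)$; the assertion that all of $x^2+1$ lands in $\chi_A(x)$ and all of $x^2+2x+2$ in $\chi_A(-1-x)$ (i.e.\ $b_2=0$, which is precisely what the exponent $m(4)$ in the statement encodes) is exactly parallel to the undetermined split between $a_1$ and $a_2$ at $n=1$, and you give no reason for it. The trace identities you invoke cannot repair this: taking traces in $I+A+A^2=J-P$ gives $\trace(A)+\trace(A^2)=-\trace(P)=0$ because $o$ is fixed-point-free, so $\trace(A^2)=0$ is a formal consequence of $\trace(A)=0$ and supplies no independent spectral information. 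You are therefore left with one linear condition for the two unknowns $a_1-a_2$ and $b_1-b_2$; combined with the degree count it yields $a_1-a_2+2(b_1-b_2)=d^2-d+1$, which becomes the second equation of \eqref{Sil simultaneous} only once $b_2=0$ is known. Some additional input is required to exclude the factor $x^2+2x+2$ --- for instance quantities like $\trace(AP^{-1})$, which vanish because $d(u,o(u))\geq k+1$, or whatever device \cite{MilMirSil} actually employs --- and as written your proof does not close this.
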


\begin{lemma}\label{k = 2 omega odd}
	An outlier-regular $(d,2;+1)$-digraph cannot have odd index $\omega (G)$.
\end{lemma}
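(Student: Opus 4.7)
The plan is to use the factorisation of the characteristic polynomial from Lemma \ref{k = 2 factorisation} and, more specifically, the simultaneous equations for the multiplicities $a_1, a_2$ given by Equation \ref{Sil simultaneous}. Assuming for contradiction that $G$ is outlier-regular with odd index $\omega = \omega(G)$, every cycle of $o$ has the same odd length $\omega$. Since $o$ is fixed-point-free (as outliers are never self-outliers), we have $\omega \geq 3$, and crucially every cycle in the permutation $o$ is \emph{odd}.

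From this observation I would extract two immediate numerical consequences. First, $m''(j) = 0$ for every $j \geq 1$, because $m''(j)$ counts the even cycles of $o$ whose length is divisible by $j$, and $o$ has no even cycles at all. In particular $m''(1) = 0$. Second, $m(4) = 0$, because $m(4)$ counts cycles of length divisible by $4$, and every cycle of $o$ has odd length.

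Now I would substitute these values into the simultaneous equations
\[ a_1 + a_2 = m''(1), \qquad d^2 - d + 1 = a_1 - a_2 + 2m(4). \]
The first equation yields $a_1 + a_2 = 0$, and since $a_1, a_2$ are non-negative integers this forces $a_1 = a_2 = 0$. Feeding this, together with $m(4) = 0$, into the second equation gives $d^2 - d + 1 = 0$. However, the polynomial $d^2 - d + 1$ has discriminant $-3$ and is strictly positive for every real $d$, so this equation has no integer solution, contradicting the assumption that $G$ exists.

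There is essentially no obstacle here beyond correctly identifying which of the permutation-counts $m'(j), m''(j), m(j)$ vanish under the hypothesis; once one observes that odd index in an outlier-regular digraph precludes any even cycle in $o$, the spectral constraint $d^2 - d + 1 = a_1 - a_2 + 2m(4)$ of Lemma \ref{k = 2 factorisation} immediately fails.
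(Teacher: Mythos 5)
Your proposal is correct and follows exactly the paper's own argument: odd index in an outlier-regular digraph forces $m''(1) = m(4) = 0$, hence $a_1 = a_2 = 0$ and the impossible equation $d^2 - d + 1 = 0$ from Equation \ref{Sil simultaneous}. Nothing further is needed.
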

\begin{proof}
	Suppose that $\omega (G)$ is odd. Then $m(4) = m''(1) = 0$ and $a_1 = a_2 = 0$. Equation \ref{Sil simultaneous} then gives $d^2-d+1 = 0$, which has no real solutions. 
\end{proof}
Lemma \ref{k = 2 omega odd} disposes of all of the green entries in Table \ref{tab:k = 2 excess 1}.

\begin{lemma}\label{22 and 44}
	There are no $22$- or $44$-outlier-regular $(6,2;+1)$-digraphs.
\end{lemma}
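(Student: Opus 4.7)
The plan is to apply Lemma \ref{k = 2 factorisation} directly to each of the two cases, using the fact that $d = 6$ forces $d^2 - d + 1 = 31$, which is an odd number. The strategy is to read off the values $m''(1)$ and $m(4)$ from the assumed cycle structure of $o$, substitute into the simultaneous equations (\ref{Sil simultaneous}), and show that no non-negative integer solution $(a_1,a_2)$ exists.

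First I would handle the $22$-outlier-regular case. If every cycle of $o$ has length $22$, then since the order of $G$ is $M(6,2)+1 = 44$, the permutation $o$ consists of exactly two cycles of length $22$, both even. Therefore $m''(1) = 2$ and, since $4 \nmid 22$, we have $m(4) = 0$. Substituting $d = 6$ into Equation \ref{Sil simultaneous} yields the system $a_1 + a_2 = 2$ and $a_1 - a_2 = 31$, forcing $a_1 = 33/2$, which is not a non-negative integer. Hence no such digraph exists.

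Next I would treat the $44$-outlier-regular case. Here $o$ is a single cycle of length $44$, so $m''(1) = 1$ and, since $4 \mid 44$, we have $m(4) = 1$. Substituting into Equation \ref{Sil simultaneous} gives $a_1 + a_2 = 1$ and $a_1 - a_2 = 31 - 2 = 29$, so $a_1 = 15$ and $a_2 = -14$. Since $a_2$ is required to be a non-negative integer, this is again a contradiction.

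There is no real obstacle in this proof: once Lemma \ref{k = 2 factorisation} is available, the whole argument reduces to checking that the parity/sign constraints on $a_1,a_2$ fail in each case. The only thing requiring minor care is the bookkeeping of $m''(1)$ (counting only even cycles) versus $m'(1)$, and the observation that $d^2 - d + 1 = 31$ is odd, which is what makes the $a_1+a_2 = 2$ system infeasible in the $\omega = 22$ case and what forces $a_2 < 0$ in the $\omega = 44$ case.
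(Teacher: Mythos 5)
Your proof is correct and follows essentially the same route as the paper: read off $m''(1)$ and $m(4)$ from the assumed cycle structure, substitute into Equation \ref{Sil simultaneous} with $d^2-d+1=31$, and observe that the resulting systems ($a_1+a_2=2$, $a_1-a_2=31$ and $a_1+a_2=1$, $a_1-a_2=29$) have no solutions in non-negative integers. The only difference is that you spell out the explicit values $a_1=33/2$ and $(a_1,a_2)=(15,-14)$, which the paper leaves implicit.
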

\begin{proof}
	If a $(6,2;+1)$-digraph $G$ is $22$-outlier-regular, then $m(4) = 0$ and $m''(1) = 2$, so the simultaneous equations in Equation \ref{Sil simultaneous} give $a_1+a_2 = 2$ and $a_1-a_2 = 31$, which has no solution in non-negative integers. 
	
	Similarly, if $G$ is a $44$-outlier-regular $(6,2;+1)$-digraph, then $m''(1) = 1$ and $m(4) = 1$, so that Equation \ref{Sil simultaneous} yields $a_1+a_2 = 1$ and $a_1-a_2 = 29$, which again does not have non-negative solutions.  
\end{proof}

Lemma \ref{22 and 44} disposes of the pink divisors in Table \ref{tab:k = 2 excess 1}.

\begin{lemma}\label{no outlier regular 2-geo digraph}
	There are no outlier-regular $(d,2;+1)$-digraphs.
\end{lemma}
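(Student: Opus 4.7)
The plan is a case analysis over $d \in \{4, 5, 6, 7\}$ and admissible $\omega = \omega(G)$, a non-trivial divisor of $n = 2 + d + d^2$. Most entries of Table \ref{tab:k = 2 excess 1} are eliminated immediately: Theorem \ref{2-outlier regular} kills $\omega = 2$; Lemma \ref{k = 2 omega odd} kills every odd $\omega$; and Lemma \ref{22 and 44} kills the pink entries $\omega \in \{22, 44\}$ for $d = 6$. The blue entries are $\omega = n$, in which case $\langle o \rangle$ acts transitively on $V(G)$ and $G$ is therefore vertex-transitive; Corollary \ref{divisibility cor} then requires $3 \mid (2d + d^2 + d^3)$, which fails for $d = 4, 5, 7$.

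The surviving candidates are $(d, \omega) \in \{(5, 4), (5, 8), (5, 16), (6, 4)\}$. I would next sharpen Lemma \ref{k = 2 factorisation} via an integer-exponent condition: for any $\omega$-outlier-regular digraph with $\omega$ even, every $o$-cycle is even, so $m'(1) = 0$ and $m(2) = n/\omega$, and the exponent $(m(2) + m'(1) - 1)/2$ of the factor $(x^2 + x + 2)$ becomes $(n/\omega - 1)/2$. Since the characteristic polynomial of an integer matrix must factor into rational polynomials with non-negative integer exponents, $n/\omega$ is forced to be odd. For $d = 5$ the three values $n/\omega \in \{8, 4, 2\}$ are all even, so these three cases cannot occur.

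Only $(d, \omega) = (6, 4)$ remains, where $n/\omega = 11$ is odd and Equation \ref{Sil simultaneous} admits the solution $a_1 = 10, a_2 = 1$, yielding the internally consistent characteristic polynomial $(x - 6) x^{10} (x + 1) (x^2 + x + 2)^5 (x^2 + 1)^{11}$. Here I would bring in the trace identity $\trace(PA) = 0$. Structurally, $(PA)_{uu} = A_{o^{-1}(u), u}$ for every $u$, and this entry vanishes because, writing $v = o^{-1}(u)$, we have $d(v, o(v)) \geq k + 1 = 3$ and so no arc $v \to o(v)$ exists. Spectrally, the automorphism condition gives $PA = AP$, so $A$ and $P$ share a joint eigenbasis and $\trace(PA) = \sum \lambda_i \mu_i$ over this basis. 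Decomposing $\mathbb{C}^{44}$ into the four $P$-eigenspaces $V_1, V_{-1}, V_i, V_{-i}$ and applying $I + A + A^2 = J - P$ on each piece, the displayed factorisation pins down the $A$-eigenvalues on each eigenspace; in particular the factor $(x^2 + 1)^{11}$ forces $A|_{V_i} = -iI$ and $A|_{V_{-i}} = iI$, ruling out the alternative roots $-1 \pm i$ of the quadratics $\lambda^2 + \lambda + (1 \pm i) = 0$ and using that $A$ is diagonalisable on each eigenspace because the governing minimal polynomial has simple roots. Summing the contributions from the four eigenspaces then gives $\trace(PA)$ a non-zero integer value, contradicting the structural evaluation $\trace(PA) = 0$. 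The main obstacle is this forcing of the eigenvalue distribution on $V_{\pm i}$; once confirmed, the final arithmetic is routine.
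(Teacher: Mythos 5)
Your proposal is correct and its overall skeleton (eliminate $\omega=2$ by Theorem \ref{2-outlier regular}, odd $\omega$ by Lemma \ref{k = 2 omega odd}, $\omega\in\{22,44\}$ by Lemma \ref{22 and 44}, $\omega=n$ by vertex-transitivity and Corollary \ref{divisibility cor}) matches the paper's, but you diverge in the two remaining steps. For $d=5$, $\omega\in\{4,8,16\}$ your parity obstruction --- the exponent $(m(2)+m'(1)-1)/2=(n/\omega-1)/2$ of $(x^2+x+2)$ must be an integer, forcing $n/\omega$ odd --- is logically equivalent to the paper's observation that $a_1=\tfrac12\bigl(21-32/\omega\bigr)$ is not an integer; both read the same parity contradiction off Lemma \ref{k = 2 factorisation}. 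The genuine difference is the case $(d,\omega)=(6,4)$. The paper computes $\trace(A^3)=240$ from the spectrum, deduces that there are $\alpha=20$ Type I and $\beta=24$ Type II vertices, and then derives a purely combinatorial contradiction: the Type II vertices form six directed $4$-cycles, every Type I vertex has all six out-neighbours among them, and such a vertex can then reach at most twelve of the twenty-four Type II vertices by $\leq 2$-paths. Your route instead evaluates a mixed trace both structurally and spectrally. I have checked that it works: with $P$ as in Equation \ref{Counting paths for excess one} (so $P_{uv}=1$ iff $v=o(u)$), the structurally zero quantity is $\sum_u A_{u,o(u)}=\trace(AP^{-1})$, and your forcing of $A|_{V_i}=-iI$, $A|_{V_{-i}}=iI$ is sound (the alternative roots $-1\pm i$ of $x^2+x+(1\pm i)$ do not divide the characteristic polynomial, and $A$ is diagonalisable on each eigenspace since $x^2+x+(1+\zeta)$ is squarefree). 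The spectral evaluation is then $1\cdot 1+(-1)(-1)+(-i)(-11i)+(i)(11i)=1+1-11-11=-20\neq 0$, giving the contradiction. This is shorter and arguably cleaner than the paper's argument; the paper's version buys a concrete structural picture (the Type I/Type II partition) that it reuses in the later $(7,2;+1)$ case, where the analogous trace does not immediately contradict.

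One caution on execution: the orientation of $P$ matters. With the convention forced by $I+A+A^2=J-P$, the trace $\trace(PA)=\trace(AP)=\sum_u A_{o(u),u}$ counts Type II vertices and evaluates spectrally to $24$ --- perfectly consistent, no contradiction. It is the \emph{inverse} orientation, $\trace(AP^{-1})=\sum_u A_{u,o(u)}=0$, that clashes with the spectral value $-20$. Your formula $(PA)_{uu}=A_{o^{-1}(u),u}$ shows you have the right quantity in mind, but your $P$ is then the transpose of the one appearing in $I+A+A^2=J-P$, so when you ``apply $I+A+A^2=J-P$ on each piece'' you must keep the two matrices distinct or the eigenvalue pairing on $V_{\pm i}$ flips and the contradiction evaporates. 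With that bookkeeping made explicit, the proof is complete.
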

\begin{proof}
	First let $d = 5$ and $\omega \in \{ 4,8,16\} $. Then $m''(1) = m(4) = \frac{32}{\omega }$ and $d^2-d+1 = 21$. Equation \ref{Sil simultaneous} yields $a_1+a_2 = \frac{32}{\omega }$ and $a_1-a_2 = 21-\frac{64}{\omega }$. Solving for $a_1$ shows that \[ a_1 = \frac{1}{2}\left[ 21-\frac{32}{\omega }\right] ,\] 
	which is not an integer for $\omega \in \{ 4,8,16\} $. This gets rid of all of the orange entries in Table \ref{tab:k = 2 excess 1}.  
	
	The only remaining option for an $\omega $-outlier-regular $(d,2;+1)$-digraph $G$ is that $d = 6$ and $\omega = 4$. Then $m(4) = m''(1) = 44/4 = 11$ and $d^2-d+1 = 31$. Equation \ref{Sil simultaneous} becomes $a_1+a_2 = 11$ and $a_1-a_2 = 9$, which has solution $a_1 = 10$ and $a_2 = 1$. It follows from Lemma \ref{k = 2 factorisation} that the spectrum of $G$ is \[ \{ 6^{(1)},0^{(10)},-1^{(1)},\left[ \frac{-1+\sqrt{7}i}{2}\right ]^{(5)},\left[ \frac{-1-\sqrt{7}i}{2}\right ]^{(5)},i^{(11)},(-i)^{(11)}\} \]
	where multiplicities are indicated in round brackets. Summing the third powers of the eigenvalues, it follows that \[ Tr(A^3) = 240.\] A vertex is contained in $6$ directed triangles if it is Type I and $5$ directed triangles if it is Type II. If there are $\alpha $ Type $I$ vertices and $\beta $ Type II vertices in $G$, it follows that $6\alpha + 5\beta = 240$ and $\alpha + \beta = 44$. Solving these equations, we have $\alpha = 20$ and $\beta =24$. Let $A$ be the subdigraph of $G$ induced by the Type I vertices and $B$ the subdigraph induced by the Type II vertices. By Lemma \ref{Type-II outneighbours} it follows that $B$ consists of a collection of $6$ directed $4$-cycles. Thus $|(A,B)| = |(B,A)| = 120$. Thus each vertex in $A$ has out-neighbourhood entirely contained in $B$. Each vertex in $B$ has just one out-neighbour in $B$ and so each arc from a vertex $u$ of $A$ allows it to reach just twelve vertices of $B$ by paths of length $\leq 2$, which is impossible.     \end{proof}

It follows by Lemma \ref{omega is small} that any minimal $(d,2;+1)$-digraph must be either Type A (with a directed $4$-cycle $C$ of vertices with order $4$ and all other vertices with order greater than $4$) or Type B. We will take an inductive approach. Theorem \ref{no (3,2;+1)-digraphs} shows that there is no $(3,2;+1)$-digraph, so we can take any $(4,2;+1)$-digraph to be minimal, which allows us to show that $(4,2;+1)$-digraphs do not exist, so that any $(5,2;+1)$-digraph is minimal and so on. We make the following two observations from Lemma \ref{k = 2 factorisation} and Corollary \ref{automorphism fixset} respectively.

\begin{lemma}\label{observation one}
	If $G$ is a minimal $(d,2;+1)$-digraph, then $m''(1)$ is odd.
\end{lemma}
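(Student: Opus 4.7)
The plan is to extract the parity of $m''(1)$ directly from the two simultaneous equations supplied by Lemma \ref{k = 2 factorisation}, namely
\[ a_1+a_2 = m''(1) \qquad \text{and} \qquad d^2-d+1 = a_1-a_2+2m(4), \]
where $a_1,a_2$ are non-negative integers (the multiplicities of the eigenvalues $0$ and $-1$ in the spectrum of $G$).

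The key step is a parity observation: $a_1+a_2$ and $a_1-a_2$ always share their parity, since their difference $2a_2$ is even. Next I would reduce the second equation modulo $2$. The $2m(4)$ term vanishes, and $d^2-d = d(d-1)$ is even for every integer $d$, so
\[ a_1-a_2 \equiv 1 \pmod{2}. \]
Combining these two facts gives $m''(1) = a_1+a_2 \equiv 1 \pmod{2}$, which is exactly the claim.

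There is no real obstacle here; Lemma \ref{k = 2 factorisation} already does the heavy lifting, and the statement is essentially a one-line corollary once the matching parities of $a_1\pm a_2$ are noticed. It is worth remarking that the minimality hypothesis on $G$ does not actually enter the argument — the parity conclusion holds for every $(d,2;+1)$-digraph. The lemma is phrased in the minimal setting only because it is being packaged with its companion observation (derived from Corollary \ref{automorphism fixset}) to drive the inductive treatment of the remaining cases $d=4,5,6,7$ later in Section \ref{(d,2;+1)-digraphs}.
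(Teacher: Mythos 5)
Your proof is correct and is essentially identical to the paper's: both extract the parity of $m''(1) = a_1 + a_2$ from the second equation of Lemma \ref{k = 2 factorisation} by noting that $a_1+a_2 \equiv a_1-a_2 = d^2-d+1-2m(4) \equiv 1 \pmod{2}$. Your side remark that minimality is not actually used is also accurate — the paper's one-line proof likewise never invokes it.
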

\begin{proof}
	By Lemma \ref{k = 2 factorisation} we have $a_1+a_2 = m''(1)$ and $a_1-a_2 = d^2-d+1-2m(4)$, so $m''(1)$ has the same parity as $d^2-d+1$, which is odd.
\end{proof}

\begin{lemma}\label{observation two}
	There are exactly two non-zero entries in the permutation vector of a minimal $(d,2;+1)$-digraph and both cycle lengths of $o$ are even.
\end{lemma}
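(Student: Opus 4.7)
The plan is to combine Corollary~\ref{automorphism fixset} with the minimality of $G$ to sharply constrain the cycle structure of the outlier automorphism $o$. Since $G$ is minimal, no fixed set of a non-identity automorphism of $G$ can itself be a $(d',2;+1)$-digraph, so by Corollary~\ref{automorphism fixset} every such fixed set has cardinality in $\{0,2,4\}$. By Lemma~\ref{no outlier regular 2-geo digraph}, $G$ is not outlier-regular, so Lemma~\ref{omega is small} places $G$ in one of two cases: Type A (with $\omega(G) = 4$ and a directed $4$-cycle of order-$4$ vertices) or Type B (with $\omega(G) = 2$ and exactly two vertices of order $2$). In either case the $\omega$-cycle of $o$ is unique, so $m_\omega = 1$.

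The main step is to rule out three or more distinct cycle lengths in $o$. Suppose $o$ has cycle lengths $c_1 = \omega < c_2 < \cdots < c_s$ with $s \ge 3$. Then $o^{c_2}$ is non-identity, since the order of $o$ equals $\lcm(c_1,\dots,c_s) \ge c_s > c_2$. Its fixed set $\fix(o^{c_2})$ is the union of those cycles of $o$ whose length divides $c_2$; as $c_i > c_2$ for $i \ge 3$, this set consists of the $c_2$-cycles together with the $\omega$-cycle when $\omega \mid c_2$. A short case split on the Type of $G$ and the parity of $c_2$ shows that $|\fix(o^{c_2})|$ is always either strictly larger than $4$ or odd, each contradicting $|\fix(o^{c_2})| \in \{0,2,4\}$. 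Hence $s \le 2$, and non-regularity gives $s = 2$.

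To show both cycle lengths are even, note that $c_1 = \omega \in \{2,4\}$ is even by inspection. If $c_2$ were odd then $c_1 \nmid c_2$, so $\lcm(c_1,c_2) > c_2$ and $o^{c_2}$ is again non-identity; in this case $\fix(o^{c_2})$ consists solely of the $c_2$-cycles, giving $|\fix(o^{c_2})| = m_{c_2} c_2$, a positive odd integer, again contradicting $|\fix(o^{c_2})| \in \{0,2,4\}$. Thus $c_2$ must be even as well.

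The main obstacle is the case analysis in the second paragraph, in particular ensuring that small values of $c_2$ (such as $c_2 = 3$ in Type B, where the $\omega$-cycle is not added to $\fix(o^{c_2})$) do not inadvertently yield $|\fix(o^{c_2})| \in \{2,4\}$. Once cases are split on whether $\omega \mid c_2$, each subcase reduces to an elementary arithmetic check on $m_{c_2} c_2$, after which the contradictions follow immediately from Corollary~\ref{automorphism fixset}.
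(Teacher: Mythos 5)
Your proof is correct and follows essentially the same route as the paper: both arguments combine Corollary~\ref{automorphism fixset} with minimality to force every non-identity power of $o$ to fix $0$, $2$ or $4$ vertices, and then obtain contradictions from the fixed-point counts of suitable powers $o^{c}$ (the paper first excludes odd cycle lengths and then a second ``large'' even length, while you first bound the number of distinct lengths and then exclude an odd second length). One small imprecision: in your last step $m_{c_2}c_2$ need not be odd (it is only a positive multiple of the odd number $c_2\geq 3$), but since no such multiple lies in $\{0,2,4\}$ the contradiction still goes through.
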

\begin{proof}
	By Lemma \ref{no outlier regular 2-geo digraph}, a minimal $(d,2;+1)$-digraph is either Type A, in which case the smallest non-zero entry of the permutation vector is $m_4 = 1$, or Type B, in which case the smallest entry is $m_2 = 1$. 
	
	By Corollary \ref{automorphism fixset}, for each $r \geq 1$ the automorphism $o^r$ has fix-set of size $0$, $2$ or $4$, or else fixes every vertex of $G$. Therefore if for some $r \geq 2$ the automorphism $o^r$ fixes $3$ or $\geq 5$ vertices of $G$, then $o^r$ is the identity automorphism and every vertex of $G$ has order dividing $r$. Suppose that the permutation vector contains a non-zero entry $m_j = 0$, where $j\geq 3$ is odd. Then $o^j$ fixes either $3$ or $\geq 5$ vertices of $G$, but not the vertices with even order, which is impossible. Likewise, if $i < j$ are both even, $i$ is greater than $\omega (G)$ and $m_i$ and $m_j$ are both non-zero, then $o^i$ fixes at least $6$ vertices with order $\omega (G)$ or $i$, but not the vertices with order $j$, again a contradiction.  
\end{proof}

\begin{theorem}\label{no 4,2;1 digraph}
	There are no $(4,2;+1)$-digraphs.
\end{theorem}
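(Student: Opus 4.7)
The plan is to exhaust all possible cycle structures of the outlier automorphism $o$ using the structural and spectral machinery developed in the previous sections. By Theorem \ref{no (3,2;+1)-digraphs} there are no $(3,2;+1)$-digraphs, so a putative $(4,2;+1)$-digraph $G$ would be minimal in the sense of the preceding discussion. Lemma \ref{no outlier regular 2-geo digraph} rules out the outlier-regular case, so by Lemma \ref{omega is small} $G$ must be either Type A (a directed $4$-cycle of vertices of order $4$ together with cycles of a single longer even length) or Type B (a unique transposition together with cycles of a single longer even length).

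Next I would enumerate the admissible permutation vectors. Since the order is $M(4,2)+1 = 22$ and by Lemma \ref{observation two} there are exactly two cycle lengths in $o$ and both are even, Type A requires $4 + jm_j = 22$ with $j$ an even divisor of $18$, giving $(j,m_j) \in \{(2,9),(6,3),(18,1)\}$, while Type B requires $2 + jm_j = 22$ with $j$ an even divisor of $20$, giving $(j,m_j) \in \{(4,5),(10,2),(20,1)\}$. Because every cycle of $o$ is even, $m''(1)$ is simply the total number of cycles $1 + m_j$, and Lemma \ref{observation one} forces this to be odd, so $m_j$ must be even. This parity constraint eliminates every Type A candidate and all Type B candidates except $(j,m_j) = (10,2)$, i.e.\ one transposition together with two $10$-cycles.

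Finally, in that single surviving case I would substitute $m''(1) = 3$ and $m(4) = 0$ (since $4 \nmid 2$ and $4 \nmid 10$) into the simultaneous equations of Lemma \ref{k = 2 factorisation}. With $d^2 - d + 1 = 13$ these read $a_1 + a_2 = 3$ and $a_1 - a_2 = 13$, forcing $a_2 = -5$, which contradicts the non-negativity of $a_2$. Hence no permutation structure survives and no $(4,2;+1)$-digraph can exist. The genuine content of the argument has already been done by Lemmas \ref{omega is small}, \ref{no outlier regular 2-geo digraph}, \ref{observation one}, \ref{observation two} and \ref{k = 2 factorisation}; the only place where care is needed is in enumerating the even divisors of $18$ and $20$ correctly and in matching the parity of $1 + m_j$ to the parity requirement on $m''(1)$, so there is no serious obstacle.
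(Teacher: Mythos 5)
Your proof is correct, and it reaches the conclusion by a slightly different and leaner route than the paper's. The paper first pins down the admissible vertex orders locally: it lets $o^{4}$ (Type A) or $o^{2}$ (Type B) act on the out-neighbourhood of a vertex of minimal order, notes that this action is fixed-point-free on the remaining three (resp. four) out-neighbours, and concludes via Corollary \ref{automorphism fixset} that $o^{12}$ (resp. $o^{8}$) is the identity; after that restriction every surviving permutation vector is killed by Lemmas \ref{observation one} and \ref{observation two} alone, and Lemma \ref{k = 2 factorisation} is never invoked. You skip the local automorphism-action step and instead enumerate all solutions of $jm_j=18$ (Type A) and $jm_j=20$ (Type B) with $j$ even, which Lemma \ref{observation two} licenses directly; your parity argument ($m''(1)=1+m_j$ must be odd by Lemma \ref{observation one}, so $m_j$ must be even) is exactly the mechanism the paper uses on its own shorter candidate lists, and your divisor enumerations are correct. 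The price of skipping the local step is one extra surviving case, $m_2=1$, $m_{10}=2$, which the paper's order restriction (all orders dividing $8$) would have excluded; you dispose of it soundly with Equation \ref{Sil simultaneous}, since $m''(1)=3$ and $m(4)=0$ force $a_1+a_2=3$ and $a_1-a_2=13$, whence $a_2<0$. Both arguments are valid; yours trades the fixed-point analysis of $o^r$ on a Moore tree for a single application of the spectral factorisation, which arguably makes the case analysis more mechanical and easier to check.
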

\begin{proof}
	Suppose that $G$ is a $(4,2;+1)$-digraph. Assume first that $G$ is Type A. Let $u$ be a vertex on the 4-cycle $C$ of vertices with order $4$, with $N^+(u) = \{ u_1,u_2,u_3,u_4\} $, where $u_1$ also lies on $C$. The automorphism $o^4$ fixes $u$ and $u_1$, but has no fixed points in $\{ u_2,u_3,u_4\} $, so $o^2$ permutes $u_2,u_3$ and $u_4$ in a $3$-cycle, say $(u_2u_3u_4)$. Thus $o^{12}$ fixes every vertex of $T(u)$; by Corollary \ref{automorphism fixset}, $o^{12}$ fixes every vertex of $G$ and so every vertex of $G$ has order $4, 6$ or $12$. $G$ has order $22$, so we either have $m_4 = 1, m_6 = 3$ or $m_4 = 1, m_6 = 1, m_{12} = 1$. Both are impossible by Lemmas \ref{observation one} and \ref{observation two}.
	
	Thus we can assume $G$ to be Type B. Let $u$ be one of the two vertices of $G$ with order $2$. The automorphism $o^2$ fixes $u$, but permutes its out-neighbours $u_1,u_2,u_3$ and $u_4$ without fixed points. Hence without loss of generality $o^2$ permutes these vertices either as $(u_1u_2)(u_3u_4)$ or $(u_1u_2u_3u_4)$; in either case $o^8$ fixes every vertex of $T(u)$ and hence all of $G$, so every vertex has order $2,4$ or $8$. Hence $22  = 2+4m_4+8m_8$, or $5 = m_4+2m_8$. There are three solutions of this equation: i) $m_4 = 1, m_8 = 2$, ii) $m_4 = 3, m_8 = 1$ and iii) $m_4 = 5, m_8 = 0$. By Lemma \ref{observation two} only option iii) can hold. Thus the two non-zero entries of the permutation vector of $G$ are $m_2 = 1, m_4 = 5$ and $m''(1) = 6$, which is even, contradicting Lemma \ref{observation one}.
\end{proof}

Having proved that there are no $(4,2;+1)$-digraphs, we know that any $(5,2;+1)$-digraph is minimal.
\begin{theorem}
	There is no $(5,2;+1)$-digraph.
\end{theorem}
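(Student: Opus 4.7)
The plan is to mimic the strategy used for $(4,2;+1)$-digraphs in Theorem \ref{no 4,2;1 digraph}. Since $(3,2;+1)$- and $(4,2;+1)$-digraphs have been ruled out by Theorem \ref{no (3,2;+1)-digraphs} and Theorem \ref{no 4,2;1 digraph} respectively, any $(5,2;+1)$-digraph $G$ is minimal; it cannot be outlier-regular by Lemma \ref{no outlier regular 2-geo digraph}, so Lemma \ref{omega is small} forces it to be either Type A or Type B. Its order is $M(5,2)+1=32$. The crucial auxiliary fact used throughout is that whenever a power $o^{r}$ fixes the entire depth-$1$ Moore tree $T(u)$ of some vertex $u$ (six vertices), Corollary \ref{automorphism fixset} combined with the non-existence of $(3,2;+1)$- and $(4,2;+1)$-digraphs forces $o^{r}$ to be the identity on $G$.

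For Type A, I would fix a vertex $u$ on the directed 4-cycle $C$ of vertices of order $4$, write $N^{+}(u)=\{u_{1},\dots,u_{5}\}$ with $u_{1}\in C$, and note that $u_{2},\dots,u_{5}$ all have order strictly greater than $4$, so $o^{4}$ acts on them as a derangement of four points, of cycle type $(2,2)$ or $(4)$. In the $(2,2)$ subcase $o^{8}$ fixes $T(u)$ and hence is the identity on $G$, so every vertex order lies in $\{4,8\}$; since $m_{4}=1$ this yields $28=8m_{8}$, with no integer solution. In the $(4)$ subcase $o^{16}$ is the identity, so orders lie in $\{4,8,16\}$ and $28=8m_{8}+16m_{16}$, which is impossible because the right-hand side is divisible by $8$ while $28$ is not.

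For Type B, I would let $u$ be one of the two vertices of order $2$ and consider $o^{2}$ on $N^{+}(u)$; since the other $30$ vertices have order greater than $2$, this is a derangement of five points, necessarily of cycle type $(5)$ or $(2,3)$. In the $(5)$ subcase $o^{10}$ is the identity, so orders lie in $\{2,5,10\}$; Lemma \ref{observation two} excludes the odd order $5$, forcing $m_{2}=1$ and $m_{10}=3$, whereupon $m''(1)=4$ is even and contradicts Lemma \ref{observation one}. In the $(2,3)$ subcase $o^{12}$ is the identity, so orders lie in $\{2,4,6,12\}$; Lemma \ref{observation two} requires that exactly one of $m_{4},m_{6},m_{12}$ be non-zero, and the arithmetic $30=4m_{4}$, $30=6m_{6}$, $30=12m_{12}$ admits only $m_{6}=5$, which then makes $m''(1)=6$ even and again contradicts Lemma \ref{observation one}.

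The main obstacle is simply organising the case split cleanly; once the cycle structure of $o^{r}$ on $T(u)$ is identified, the order of $o^{r}$ drops out of Corollary \ref{automorphism fixset}, and the divisibility and parity constraints from Lemmas \ref{observation one} and \ref{observation two} kill every remaining arithmetic possibility. No new spectral or structural tool beyond those already developed for $d=4$ appears to be required.
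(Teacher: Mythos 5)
Your proposal is correct and follows essentially the same route as the paper: reduce to Type A or Type B via minimality and Lemma \ref{no outlier regular 2-geo digraph}, analyse the derangement induced by the relevant power of $o$ on $N^{+}(u)$, use Corollary \ref{automorphism fixset} to conclude that a power of $o$ fixing the six vertices of $T(u)$ is the identity, and then eliminate each resulting order distribution by the divisibility and parity constraints of Lemmas \ref{observation one} and \ref{observation two}. The only cosmetic difference is that in Type A you split the $(2,2)$ and $(4)$ subcases, whereas the paper handles both at once via $o^{16}$ and the single equation $7 = 2m_{8}+4m_{16}$; the arithmetic is equivalent.
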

\begin{proof}
	Assume that $G$ is a $(5,2;+1)$-digraph. $G$ has order $32$. By Theorem \ref{no 4,2;1 digraph}, $G$ is minimal and hence by Lemma \ref{no outlier regular 2-geo digraph} is either Type A or Type B. Suppose that $G$ is Type A. As in Theorem \ref{no 4,2;1 digraph}, fix a vertex $u$ on the cycle $C$ of vertices with order $4$ and set $N^+(u) = \{ u_1,u_2,u_3,u_4,u_5\} $, where $u_1 \in V(C)$. The automorphism $o^4$ must permute $u_2,u_3,u_4$ and $u_5$ amongst themselves without fixed points, so $o^4$ acts on these vertices either as $(u_2u_3u_4u_5)$ or $(u_2u_3)(u_4u_5)$; in either case $o^{16}$ fixes all vertices of $G$ and every vertex order is $4,8$ or $16$. We have $32 = 4+8m_8+16m_{16}$, or $7 = 2m_8+4m_{16}$. However, the right-hand side is even and the left odd.
	
	Now suppose that $G$ is Type $B$ and let $u$ be a vertex of $G$ belonging to the unique transposition of $o$. $o^2$ permutes the vertices of $N^+(u) = \{ u_1,u_2,u_3,u_4,u_5\} $ without fixed points; without loss of generality, $o^2$ acts on these vertices either as i) $(u_1u_2u_3u_4u_5)$ or ii) $(u_1u_2u_3)(u_4u_5)$. 
	
	In case i) $o^{10}$ fixes every vertex of $G$ and every vertex order is $2,5$ or $10$, where $30 = 5m_5+10m_{10}$. By Lemma \ref{observation two}, $m_5 = 0$, so the non-zero entries of the permutation vector are $m_2 = 1, m_{10} = 3$, yielding $m''(1) = 4$, contradicting Lemma \ref{observation one}.
	
	In case ii) $o^{12}$ is the identity and every vertex has order $2,3,4,6$ or $12$. Lemma \ref{observation two} shows that $m_3 = 0$. We have $30 = 4m_4+6m_6+12m_{12}$ and Lemma \ref{observation two} shows that just one of $m_4,m_6$ and $m_{12}$ is non-zero. By Lemma \ref{observation two}, as $12$ and $4$ do not divide $30$, we have $m_4 = m_{12} = 0$. If $m_6 > 0$, then $m_2 = 1$ and  $m_6 = 5$, giving an even value of $m''(1)$, which is impossible. 
\end{proof}

\begin{theorem}
	There is no $(6,2;+1)$-digraph.	
\end{theorem}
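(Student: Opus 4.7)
The plan is to mirror the approach of the two preceding theorems. Observe that $G$ has order $44$. Since $(3,2;+1)$-, $(4,2;+1)$- and $(5,2;+1)$-digraphs have been ruled out, any hypothetical $(6,2;+1)$-digraph is minimal, so by Lemma~\ref{no outlier regular 2-geo digraph} and Lemma~\ref{omega is small} it suffices to dispose of the Type~A and Type~B cases. In each case I fix a distinguished vertex $u$ (on the directed $4$-cycle $C$ of order-$4$ vertices in Type~A, and a member of the unique transposition in Type~B), write $N^+(u) = \{u_1,\ldots,u_6\}$, and analyse the cycle structure of $o^{\omega(G)}$ on the subset of $N^+(u)$ that it does not fix.

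For Type~A, the automorphism $o^4$ fixes $u$ and the unique out-neighbour $u_1 \in V(C)$, but has no fixed point on the remaining five neighbours. Hence its restriction to $\{u_2,\ldots,u_6\}$ is either a $5$-cycle or the product of a transposition and a $3$-cycle. In the $(2,3)$ subcase $o^{12}$ is the identity on all of $G$, every vertex order divides $12$, and the order equation reduces to $3(m_6 + 2m_{12}) = 20$, which has no integer solution. In the $5$-cycle subcase $o^{20}$ is the identity, so all vertex orders lie in $\{4,10,20\}$; combined with Lemma~\ref{observation two} and $44 = 4 + 10m_{10} + 20m_{20}$, only the permutation vectors $(m_4,m_{10}) = (1,4)$ and $(m_4,m_{20}) = (1,2)$ survive. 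I then invoke Lemma~\ref{k = 2 factorisation}: in each case I compute $m''(1)$ and $m(4)$, and the simultaneous system \eqref{Sil simultaneous} is forced to give $a_2 < 0$, contradicting non-negativity.

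For Type~B, let $u$ and $o(u)$ be the two vertices of order $2$. Since $d(u,o(u)) \geq 3$, no out-neighbour of $u$ can have order $2$, so $o^2$ permutes $N^+(u)$ without fixed points; its cycle type on these six vertices is therefore $6$, $4+2$, $3+3$, or $2+2+2$. These force every vertex order to divide $12$, $8$, $6$, $4$ respectively. Using $m_2 = 1$, the order equation $42 = \sum_{j\geq 3} j\,m_j$, and Lemma~\ref{observation two} (which permits exactly one further non-zero entry in the permutation vector, of even cycle length), each subcase leaves at most one candidate: the $6$-cycle and $3+3$ subcases both produce $m_6 = 7$ and hence $m''(1) = 8$, which contradicts Lemma~\ref{observation one}, while the $4+2$ and $2+2+2$ subcases yield non-integer values.

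The main obstacle is the Type~A $5$-cycle subcase. There the elementary cardinality and parity tests used elsewhere are insufficient, since both $(m_4, m_{10}) = (1,4)$ and $(m_4, m_{20}) = (1,2)$ pass Lemmas~\ref{observation one} and \ref{observation two}; the only route I see to a contradiction is to invoke the full characteristic-polynomial factorisation of Lemma~\ref{k = 2 factorisation} and extract non-negativity of the exponents $a_1,a_2$ from the system \eqref{Sil simultaneous}.
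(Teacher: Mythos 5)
Your overall strategy is exactly the paper's: reduce to Type A and Type B via minimality and Lemma~\ref{no outlier regular 2-geo digraph}, split on the cycle type of $o^{\omega(G)}$ restricted to the non-fixed out-neighbours of a distinguished vertex, and finish each branch with the order-counting equation, Lemmas~\ref{observation one} and~\ref{observation two}, and (where needed) the system~\eqref{Sil simultaneous} from Lemma~\ref{k = 2 factorisation}. Your Type~B analysis and your Type~A $5$-cycle subcase (including the recognition that only the factorisation lemma kills $(m_4,m_{10})=(1,4)$ and $(m_4,m_{20})=(1,2)$ via $a_2<0$) coincide with the paper's argument.

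There is, however, a genuine error in the Type~A subcase where $o^4$ restricts to $\{u_2,\dots,u_6\}$ as a transposition times a $3$-cycle. You assert that $o^{12}$ is the identity and that all vertex orders divide $12$. That is false: $o^{12}=(o^4)^3$ is the identity on the $3$-cycle part but acts as the transposition itself on the $2$-cycle part, so the two vertices in the transposition have order $8$ and one needs $o^{24}$ to fix all of $T(u)$. The possible vertex orders are therefore $4,6,8,12,24$ and the correct counting equation is $40=6m_6+8m_8+12m_{12}+24m_{24}$, not $3(m_6+2m_{12})=20$. Crucially, this equation \emph{does} have a solution compatible with Lemma~\ref{observation two}, namely $m_4=1$, $m_8=5$, so the Diophantine argument alone does not close this branch. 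The paper eliminates it by observing that this forces $m''(1)=m_4+m_8=6$, which is even and contradicts Lemma~\ref{observation one} — the same parity tool you already deploy in Type~B. With that correction your proof goes through; as written, the $(2{+}3)$ subcase is unproved.
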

\begin{proof}
	Suppose that there exists a $(6,2;+1)$-digraph $G$ with order $44$. Suppose that $G$ is Type $A$; as before, let $u \rightarrow u_1$ be an arc of the $4$-cycle of vertices with order $4$. We can assume that the automorphism $o^4$ permutes the other out-neighbours $\{ u_2,u_3,u_4,u_5,u_6\} $ either as i) $(u_2u_3u_4u_5u_6)$ or ii) $(u_2u_3u_4)(u_5u_6)$. In case i) every vertex of $G$ has order $4, 5, 10$ or $20$; by Lemma \ref{observation two} $m_5 = 0$. If $m_{20} > 0$, then by Lemma \ref{observation two} $m_4 = 1, m_{10} = 0, m_{20} = 2$. By Lemma \ref{k = 2 factorisation}, this yields $a_1+a_2 = 3$ and $a_1-a_2=25$, which would imply that $a_2$ is negative. Therefore $m_{10} = 4, m_{20} = 0$, giving $a_1+a_2 = 5, a_1-a_2 = 29$, which again is impossible.

	In case ii) every vertex order is $4,6,8,12$ or $24$. We have $40 = 6m_6+8m_8+12m_{12}+24m_{24}$. As none of $6,12$ or $24$ are divisors of $40$, Lemma \ref{observation two} shows that $m_8 = 5,m_6 = m_{12} = m_{24} = 0$, so that $m''(1) =6$ is even, violating Lemma \ref{observation one}. 
	
	Now suppose that $G$ is Type B. Let $u$ be a vertex in the unique transposition of $o$. We can assume that $o^2$ permutes the vertices of $N^+(u)$ in one of four ways: i) $(u_1u_2u_3u_4)(u_5u_6)$, ii) $(u_1u_2u_3u_4u_5u_6)$, iii) $(u_1u_2u_3)(u_4u_5u_6)$ or iv) $(u_1u_2)(u_3u_4)(u_5u_6)$. For Case i), every vertex has order $2$, $4$ or $8$, but neither $4$ nor $8$ divide $42$, contradicting Lemma \ref{observation two}. 
	
	In Cases ii), iii) and iv) each vertex order is $2,3,4,6$ or $12$, with $42 = 3m_3+4m_4+6m_6+12m_{12}$. By Lemma \ref{observation two} we know that $m_3 = 0$ and just one of $m_4,m_6$ and $m_{12}$ is non-zero. As $4$ and $12$ do not divide $42$, we must have $m_4 = m_{12} = 0$ and $m_6 = 7$, so that $m''(1) = 8$ is even, a contradiction.  
\end{proof}

\begin{theorem}
	There are no $(7,2;1)$-digraphs.
\end{theorem}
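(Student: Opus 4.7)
The plan is to adapt the strategy used for $d=4,5,6$. Since the preceding theorems exclude $(d,2;+1)$-digraphs for $d\le 6$, any hypothetical $(7,2;+1)$-digraph $G$ is minimal and hence, by Lemma~\ref{no outlier regular 2-geo digraph}, of Type A or Type B. For Type A I would fix a vertex $u$ on the $4$-cycle $C$ of order-$4$ vertices with $N^+(u)=\{u_1,\dots,u_7\}$ and $u_1\in C$; the automorphism $o^4$ then permutes $\{u_2,\dots,u_7\}$ without fixed points in one of the patterns $\{6\}$, $\{4,2\}$, $\{3,3\}$, $\{2,2,2\}$, so some $o^L$ with $L\in\{24,16,12,8\}$ fixes all of $G$ and every vertex order divides $L$ and is at least $4$. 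Applying Lemmas~\ref{observation one} and~\ref{observation two}, the only integer solution to $4+j\,m_j=58$ with $j\mid L$ and $j$ even is $m_6=9$, and this gives $m''(1)=10$, contradicting Lemma~\ref{observation one}.

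For Type B I fix $u$ in the unique transposition of $o$; then $o^2$ permutes $N^+(u)$ fixed-point-freely in one of the patterns $\{7\}$, $\{5,2\}$, $\{4,3\}$, $\{3,2,2\}$. In the pattern $\{7\}$ orders divide $14$ and we are forced into $m_2=1,\,m_{14}=4$; Lemma~\ref{k = 2 factorisation} then gives $a_1+a_2=5$, $a_1-a_2=43$, hence $a_2=-19<0$, a contradiction. In every other pattern the only integer solutions with even cycle lengths are $m_2=1,\,m_4=14$ (always allowed) and $m_2=1,\,m_8=7$ (only in pattern $\{4,3\}$); the second is excluded because $m''(1)=8$ is even, contradicting Lemma~\ref{observation one}. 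All surviving sub-cases therefore collapse to the single permutation vector $m_2=1,\,m_4=14$, and ruling this out is the main obstacle.

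For this surviving case Lemma~\ref{k = 2 factorisation} yields $a_1=15$, $a_2=0$ and the spectrum
\[\left\{7^{(1)},\,0^{(15)},\,\left(\tfrac{-1+i\sqrt 7}{2}\right)^{(7)},\,\left(\tfrac{-1-i\sqrt 7}{2}\right)^{(7)},\,i^{(14)},\,(-i)^{(14)}\right\},\]
so that $Tr(A^3)=343+7\cdot 5=378$. Since a Type~I vertex lies in $d=7$ directed triangles and a Type~II vertex in $d-1=6$ (by Lemma~\ref{Type-II outneighbours}), the number of Type~II vertices is $\beta=7\cdot 58-378=28$. Because the Type~I/II dichotomy is preserved by $o$, every $o$-orbit is monochromatic and $\beta$ must be a sum of orbit lengths; since $28-2=26$ is not a multiple of $4$, the transposition must be Type~I and exactly $7$ of the $14$ length-$4$ orbits are Type~II. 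Writing $A$ and $B$ for the Type~I and Type~II vertex sets, $|A|=30$, $|B|=28$, and Lemma~\ref{Type-II outneighbours} with the diregularity identities gives $e(B,B)=28$, $e(B,A)=e(A,B)=168$ and $e(A,A)=42$.

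The contradiction is a local Moore-tree count. Since $o$ preserves $A$ and is fixed-point-free, $o(u)\in A$ for every $u\in A$, so the $49$ vertices of $N^2(u)$ contain exactly $|A|-2-k=28-k$ elements of $A$, where $k=|N^+(u)\cap A|$. Partitioning $N^2(u)$ by its penultimate vertex and noting that each $B$-out-neighbour of $u$ contributes exactly $6$ elements of $A$ to $N^2(u)$, we obtain
\[\sum_{v\in N^+(u)\cap A}|N^+(v)\cap A|=5k-14,\]
so non-negativity of the left-hand side forces $k\ge 3$ for every $u\in A$. Summing over $u\in A$ then gives $42=e(A,A)\ge 3\cdot 30=90$, the desired contradiction.
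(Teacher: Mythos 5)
Your proposal is correct and follows essentially the same route as the paper: the identical Type A/Type B case analysis reduces everything to $m_2=1$, $m_4=14$, the same spectrum and triangle count give $30$ Type I and $28$ Type II vertices, and the same edge-count contradiction $e(A,A)\geq 3\cdot 30=90>42$ finishes the argument. Your final step merely counts the $A$-vertices in $N^2(u)$ instead of the $B$-vertices reachable from $u$, which is the complementary formulation of the paper's bound forcing out-degree at least $3$ within the Type I part.
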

\begin{proof}
	Assume that $G$ is a $(7,2;+1)$-digraph. $G$ has order $58$. Suppose that $G$ is Type $A$ and $u \rightarrow u_1$ is an arc of the $4$-cycle of vertices with order $4$. Then we can assume that $o^4$ permutes $\{ u_2,u_3,u_4,u_5,u_6,u_7\} = N^+(u)-\{ u_1\} $ in one of the following ways: i) $(u_2u_3u_4u_5)(u_6u_7)$, ii) $(u_2u_3)(u_4u_5)(u_6u_7)$, iii) $(u_2u_3u_4u_5u_6u_7)$ or iv) $(u_2u_3u_4)(u_5u_6u_7)$.  
	
	In Cases i) and ii) every vertex order is $4, 8$ or $16$ and $54 = 8m_8+16m_{16}$. However neither $8$ nor $16$ divides $54$, violating Lemma \ref{observation two}. We can thus assume that either case iii) or iv) holds and every vertex order is $4,6,8,12$ or $24$, with $54 = 6m_6+8m_8+12m_{12}+24m_{24}$. Lemma \ref{observation two} shows that $m_8 = m_{12} = m_{24} = 0$ and $m_6 = 9$. Then $m''(1) = 10$ is even, contradicting Lemma \ref{observation one}.
	
	Therefore assume that $G$ is Type $B$ and let $u$ be a vertex with order $2$. $o^2$ permutes the elements of $N^+(u) = \{ u_1,u_2,u_3,u_4,u_5,u_6,u_7\} $ in one of the following ways: i) $(u_1u_2u_3u_4u_5u_6u_7)$, ii) $(u_1u_2u_3u_4u_5)(u_6u_7)$, iii) $(u_1u_2u_3u_4)(u_5u_6u_7)$ or iv) $(u_1u_2u_3)(u_4u_5)(u_6u_7)$. 
	
	In Case i) all vertex orders are $2,7$ or $14$, so by Lemma \ref{observation two}, $m_2 = 1, m_7 = 0, m_{14} = 4$. Then $m''(1) = 5$ and $m(4) = 0$, so that $a_1+a_2 = 5$ and $a_1-a_2 = 43$, which has no suitable solutions. 
	
	In Case ii) all vertex orders are $2,4,5,10$ or $20$, $m_5 = 0$ and $56 = 4m_4+g10m_{10}+20m_{20}$. $10$ and $20$ do not divide $56$, so $m_2 = 1, m_4 = 14$. In Cases iii) and iv) all vertex orders are $2,3,4,6,8,12$ or $24$ and $56 = 3m_3+4m_4+6m_6+8m_8+12m_{12}+24m_{24}$. Lemma \ref{observation two} shows that the only valid solutions are $m_2 = 1$ and $m_8 = 7$ and again $m_2 = 1, m_4 = 14$. In the former case $m''(1)$ is even, so we have shown that we can assume that $m_2 = 1$ and $m_4 = 14$. Thus $m''(1) = 15$ and $m(4) = 14$, giving $a_1+a_2 = 15$, $a_1-a_2 = 43-28 = 15$, giving $a_1 = 15$ and $a_2 = 0$.  It follows from Lemma \ref{k = 2 factorisation} that the spectrum of $G$ is, counted by multiplicity,
	\[ \{ 7,0^{(15)},\left[ \frac{-1+\sqrt{7}i}{2} \right]^{(7)},\left[ \frac{-1-\sqrt{7}i}{2} \right]^{(7)},i^{(14)},-i^{(14)}\} .\] 
	Adding up the third powers of these eigenvalues, we see that the trace of $A^3$ is $Tr(A^3) = 378$. A vertex lies in $7$ directed triangles if it is Type I and 6 if it is Type II; therefore if there are $\alpha $ Type I vertices in $G$ and $\beta $ Type II vertices, then counting paths we see that 
	\[ 7\alpha +6\beta = 378, \alpha +\beta = 58.\]
	Solving these equations gives $\alpha = 30, \beta = 28$. As a vertex has the same type as its outlier, it follows that the subdigraph $B$ induced by the Type II vertices consists of 7 directed 4-cycles and there is a set $A$ of 28 vertices $v$ such that $d(v,o^-(v)) = 2$, the other two Type I vertices being $u$ and $o(u)$. 
	
	$B$ has size 28 and $(B,V(G)-B) = (V(G)-B,B) = 168$. This means that the subdigraph induced by $A' = A \cup \{ u,o(u)\} $ has size 42. Fix a vertex $v \in A'$. The outlier $o(v)$ of $v$ also lies in $A'$, so $v$ can reach every vertex of $B$ by $\leq 2$-paths. The vertex $v$ has 7 out-going arcs. Suppose that the out-degree of $v$ in the subdigraph induced by $A'$ is $\leq 2$. Each arc from $v$ to $B$ allows $v$ to reach 2 vertices of $B$; therefore the largest possible number of vertices of $B$ that $v$ could reach by $\leq 2$-paths would be achieved if $v$ has two out-neighbours in $A'$, each of which has out-neighbourhood contained in $B$ ; however, this would still only allow $v$ to reach 24 of the 28 vertices of $B$. It follows that the minimum out-degree in $A'$ is $\geq 3$, which implies that the size of the subdigraph induced by $A'$ is $\geq 90$, a contradiction.  
\end{proof}
This completes the remaining cases in the classification of $(d,2;+1)$-digraphs from \cite{MilMirSil}. Combined with the results of \cite{MilMirSil}, we see that to find a digraph with excess one, we must look at digraphs that have degree at least three and are at least 5-geodetic.

\begin{theorem}
	If $d,k \geq 2$ and $\epsilon (d,k) = 1$, then $d \geq 3$ and $k \geq 5$.
\end{theorem}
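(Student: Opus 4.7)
The plan is to assemble the non-existence results established throughout the paper together with a few previously known results, and observe that they exhaust every parameter pair $(d,k)$ with $d,k \geq 2$ outside the claimed range.

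First I would invoke Lemma~\ref{lem:known results}, which (after discarding the directed $(k+2)$-cycle, since that is the $d=1$ case) restricts the possible parameters of a $(d,k;+1)$-digraph to one of the two ranges: $k = 2$ with $3 \leq d \leq 7$, or $d \geq 3$ with $k \geq 5$. The lower bound $d \geq 3$ then follows immediately from Sillasen's result in \cite{Sil} ruling out $d = 2$; in fact the latter is already subsumed by the above dichotomy, since neither branch contains $d = 2$.

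It therefore remains to dispose of the case $k = 2$ with $3 \leq d \leq 7$. For $d = 3$ I would invoke Theorem~\ref{no (3,2;+1)-digraphs}, and for $d \in \{4,5,6,7\}$ the four case-by-case theorems proved in Section~\ref{(d,2;+1)-digraphs}; the case $d \geq 8$ was already settled in \cite{MilMirSil}. Taken together these results exclude every $(d,2;+1)$-digraph, so the only surviving parameter range for a non-trivial $(d,k;+1)$-digraph is indeed $d \geq 3$ and $k \geq 5$.

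The proof is essentially a bookkeeping compilation: there is no substantial new obstacle, because all the technical work has already been carried out in the preceding sections (the reduction to outlier-regular, Type A, or Type B digraphs, the spectral factorisation of Lemma~\ref{k = 2 factorisation}, and the parity and divisibility constraints from Lemmas~\ref{observation one} and~\ref{observation two}). The only care required is to make sure that no parameter pair slips through the cracks between Lemma~\ref{lem:known results} and the case analysis, which is verified by checking that every $(d,k)$ with $d,k \geq 2$ either lies in the excluded ranges handled above or else satisfies $d \geq 3$ and $k \geq 5$.
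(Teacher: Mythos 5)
Your compilation is correct and is exactly how the paper intends this theorem to be read: Lemma~\ref{lem:known results} reduces everything to the range $k=2$, $3\leq d\leq 7$, and Theorem~\ref{no (3,2;+1)-digraphs} together with the four theorems of Section~\ref{(d,2;+1)-digraphs} eliminate those cases. No gap; the paper itself offers no further argument beyond this bookkeeping.
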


	\section*{Acknowledgements}
The author acknowledges funding from an LMS Early Career Fellowship (Project ECF-2021-27) and thanks Prof. \v Sir\a'a\v n and Dr. Erskine for helpful discussion of this research.


\end{document}